\newtheorem{theorem}{Theorem}
\newtheorem{lemma}[theorem]{Lemma}
\newtheorem{proposition}[theorem]{Proposition}
\newtheorem{definition}[theorem]{Definition}
\newcommand{\real}{{\mathbb R}}
\newcommand{\realinf}{(-\infty, \infty]}
\newcommand{\realn}{\real^n}
\newcommand{\realm}{\real^m}
\newcommand{\realmn}{\real^{m\times n}}
\newcommand{\nat}{\mathbb{N}}
\newcommand{\inner}[2]{\left\langle#1,#2\right\rangle}
\newcommand{\parent}[1]{\left(#1\right)}
\DeclareMathOperator{\dom}{dom}
\DeclareMathOperator{\ri}{ri}
\DeclareMathOperator{\closure}{cl}
\DeclareMathOperator*{\argmin}{arg\,min}
\DeclareMathOperator*{\argmax}{arg\,max}
\newcommand{\set}[2]{\left\{#1 \; \left|\;\; #2 \right.\right\}}
\newcommand{\norm}[1]{\left\|#1\right\|}
\newcommand{\smallnorm}[1]{\lVert{#1}\rVert}
\newcommand{\abs}[1]{\left|#1\right|}
\newcommand{\card}[1]{\left|#1\right|}
\newcommand{\brac}[1]{\left\{#1\right\}}
\DeclareMathOperator{\dist}{dist}
\DeclareMathOperator{\prox}{prox}
\DeclareMathOperator{\bigo}{O}
\newcommand{\transpose}{^{\scriptscriptstyle\top}}
\begin{document}

\title{Two Innovations in Inexact Augmented Lagrangian Methods
       for Convex Optimization\thanks{This work was funded in part by the
       U.S.~Office of Naval Research grant N00014-24-1-2403.}}
\author[1]{Jonathan Eckstein}
\author[1]{Chang Yu}
\affil[1]{\small Department of Management Science and Information Systems,
          Rutgers Business School Newark and New Brunswick,
          Rutgers University}
\date{\today}

\maketitle

\vspace{-3ex}

\begin{abstract}
This paper presents two new techniques relating to inexact solution of
subproblems in augmented Lagrangian methods for convex programming.  The first
involves combining a relative error criterion for solution of the subproblems
with over- or under-relaxation of the multiplier update step.  
This analysis enables a new kind of inexact augmented Lagrangian method
that adapts the amount of
multiplier step relaxation to the accuracy of the subproblem,
subject to a viability test employing the discriminant of a certain quadratic
function.

The second innovation involves solution of augmented Lagrangian subproblems
for problems posed in standard Fenchel-Rockafellar form.  We show that
applying alternating minimization to this subproblem, as in the first two
steps of the ADMM, is equivalent to executing the classical proximal gradient
method on a dual formulation of the subproblem.  By substituting more
sophisticated variants of the proximal gradient method for the classical one,
it is possible to construct new ADMM-like methods with better empirical
performance than using ordinary alternating minimization within an inexact
augmented Lagrangian framework.

The paper concludes by describing some computational experiments exploring
using these two innovations, both separately and jointly, to solve LASSO
problems.
\end{abstract}

\section{Introduction}
\label{sec:introduction}
In augmented Lagrangian methods for convex optimization, it is common to solve
subproblems inexactly, progressively tightening tolerances as the Lagrange
multiplier estimates become more accurate.  This paper presents two new
techniques in this topic area, one relating to the ``outer loop'' --- how the
subproblem tolerance is formulated and the degree of over- or under-relaxation
in the multiplier update step --- and one relating to the ``inner loop'' ---
how the subproblems are solved.  Our computational tests combine these
two innovations.

The first innovation involves combining relative-error solution of subproblems
with over- or under-relaxation of the multiplier update step.  Specifically,
we consider including relaxation of the multiplier estimate update step in the
relative-error augmented Lagrangian method of~\cite{eckstein2013practical}.
The convergence analyses of augmented Lagrangian methods typically allow over-
or under-relaxation of the multiplier update at iteration $k$ by some factor
$\rho_k$, where $\inf_{k} \brac{\rho_k} > 0$ and $\sup_k \brac{\rho_k} < 2$.
The analysis in~\cite{eckstein2013practical}, however, does not have this
feature, although it has other desirable properties:  it uses a relative error
criterion that intrinsically adapts to each problem instance and requires only
a subgradient of the subproblem's effective objective function, not
necessarily an estimate of the distance to the exact subproblem solution or of
the error in the subproblem objective function value. Since over-relaxation
often accelerates proximal algorithms, generalizing the analysis
of~\cite{eckstein2013practical} to include relaxation of the multiplier update
seemed desirable, but proved to be somewhat involved, with the relaxation
factor $\rho_k$ appearing in several places in the subproblem approximation
criterion~\eqref{eq:alt-frame-approx} in Section~\ref{sec:extended-criterion}
below.  There are several ways in which this criterion may be used: on the one
hand, if one treats $\rho_k$ as given,
\eqref{eq:alt-frame-approx} gives an alternative acceptance criterion
for subproblem solution acceptability resembling that
in~\cite{eckstein2013practical}, but with its various terms scaled in
different ways.  Alternatively, however, the acceptance criterion can be used
to determine an acceptable 
relaxation factor
$\rho_k$ from a given inexact subproblem solution.
For example, an approximate subproblem solution that would not be acceptable
to the criterion of~\cite{eckstein2013practical} (which effectively constrains
$\rho_k = 1$) might be acceptable for a smaller value of $\rho_k$.  Below, we
propose a procedure which analyzes any given inexact subproblem solution,
determining the range of $\rho_k$, if any, for which it would be viable.  If
such a range does exist, it is possible to accept the current subproblem
solution and proceed to the multiplier update, using some relaxation factor
within the range. If no acceptable range of $\rho_k$ exists, one must refine
current subproblem solution, after which it may be tested again. To our
knowledge, adapting the length of the multiplier adjustment step to the
quality of the current approximate subproblem solution is a novel development
in augmented Lagrangian methods.  In Section~\ref{sec:extended-criterion}, we
develop the theory of this technique in the general context of parametric
duality --- see \cite[Chapters 29-30]{rockafellar1970convex}
and~\cite{rockafellar1974conjugate} --- making it applicable to essentially
any convex optimization context.  Standard parametric duality 
for convex optimization is briefly
summarized in Section~\ref{sec:parametric-duality}.

The second innovation presented here applies to a more specific 
(yet still very
general) setting, problems in the Fenchel-Rockafellar standard form
$\min_{x\in\realn}\big\{ f(x) + g (Mx) \big\}$ \cite[Chapter
31]{rockafellar1970convex} commonly used to formulate the alternating
direction method of multipliers (ADMM)~\cite{FG83,Gab83,EckBer92}. Applying
the proximal point algorithm to the dual of this problem produces an augmented
Lagrangian method of the form
\begin{align}
(x^{k+1},z^{k+1}) &\in \argmin_{\substack{x\in\real^n \\ z\in\real^m}} 
   \left\{f(x) + g(z) + \inner{p^k}{Mx-z} + \frac{c_k}{2}\norm{Mx-z}^2 \right\} 
   \label{almfenchelmin} \\
p^{k+1} &= p^k + c_k ( Mx^{k+1} - z^{k+1} ) \label{almfenchelupdate},
\end{align}
where $\brac{c_k} \subset \real_{++}$ is a sequence of parameters bounded away
from zero. The $\norm{Mx - z}^2$ term makes the minimand
in~\eqref{almfenchelmin} non-separable 
with respect to $x$ and $z$, leading to a common tactic of cyclically minimizing
it with respect only to the $x$ variables and then only to the $z$ variables.
This approach was tried as early as 1975 in~\cite{GM75}, without much
theoretical justification.  Empirically, this investigation lead to the
variant of the algorithm which updates the multipliers after a single $x$
minimization followed by a single $z$ mininization.  This procedure is the 
ADMM, eventually proved to be convergent in~\cite{FG83,Gab83}.  However, the
analysis of the ADMM commonly requires the proximal parameters $c_k$ to be
constant.

A possible alternative to the ADMM is to use alternating minimization of $f(x)
+ g(z) + \inner{p^k}{Mx-z} + \frac{c_k}{2}\norm{Mx-z}^2$ within some rigorous
  inexact augmented Lagrangian method like the one
  in~\cite{eckstein2013practical}: in this strategy, one iterates alternating
  minimizations over $x$ and $z$ until some optimality tolerance condition is satisfied,
  and then proceeds to the multiplier update $p^{k+1} = p^k + c_k ( Mx^{k+1} -
  z^{k+1} )$, where $(x^{k+1},z^{k+1})$ is the final approximate solution to
  the subproblem.  Here, we call this method ``ALM-ADSS'', standing for
  ``augmented Lagrangian method with alternating direction subproblem solution''
  There are some drawbacks to this approach, the largest being that while the
  number of multiplier updates tends to be much smaller than with the ADMM,
  the total computational effort can be far larger~\cite{PJO}.  Furthermore,
  convergence of the ``inner loop'' of alternating minimization on the
  subproblem $\min_{x,z} \big\{ f(x)
+ g(z) + \inner{p^k}{Mx-z} + \frac{c_k}{2}\norm{Mx-z}^2 \big\}$ is not
  straightforward to prove directly, although one may apply a result of
  Tseng~\cite{Tse01}.  However, Tseng's result, aimed at more general
  situations, does not use the full structure of the subproblem and yields
  only subsequential convergence.

The second innovation in this paper revisits algorithms of this form: an inner
loop applying alternating minimization to the augmented Lagrangian subproblem
in~\eqref{almfenchelmin} inside an inexact augmented Lagrangian algorithm.
Specifically, Section~\ref{sec:connection-between-admm} shows that when $g$ is
``prox friendly'', that is, the proximal map $\prox_{cg}(\bar z) \doteq
\argmin_{z\in\realm}\big\{g(z) + (1/2c)\smallnorm{z-\bar z}^2\big\}$ is
readily computed for any $\bar z \in \realm$ and scalar $c > 0$, a dual
formulation of the subproblem has exactly the form typically suitable for
proximal gradient (forward-backward) methods: minimizing the sum of two
functions: one prox friendly and the other with a Lipschitz continuous
gradient.  Furthermore, applying the proximal gradient method to this dual
subproblem turns out to be identical to using alternating minimization on the
primal formulation of the subproblem.  Since strong convergence results are
known for applying the proximal gradient method to convex Lipschitz-gradient
functions --- see for example~\cite[Sections 10.4 and 10.6]{BeckBook}
--- this insight provides stronger convergence
guarantees than can be obtained from~\cite{Tse01} for the ``inner loop''
procedure of using alternating minimization to (approximately) solve the
augmented Lagrangian subproblem.

However, the insight about the relationship between alternating direction and
forward-backward methods for solving the subproblems
in~\eqref{eq:dual-problem} does more than provide a stronger convergence
result for ALM-ADSS, a method which~\cite{PJO} showed to have disappointing
empirical performance.  Whenever $g$ is prox friendly, the dual formulation of
the subproblem has the form naturally associated with proximal gradient
methods, and the standard proximal gradient method is not the only option for
solving it: as discussed in Section~\ref{sec:admm-like-framework}, one could
instead use any of its more ``modern'', theoretically faster converging
variants.  In particular, we consider using the method of Chambolle and Dossal
in~\cite{chambolle2015convergence}, a modification of the FISTA algorithm of
Beck and Teboulle~\cite{beck2009fast}. Both the Chambolle-Dossal (CD) and
original FISTA methods have theoretically accelerated $\bigo(1/k^2)$ function
value convergence; we employ the CD variant because its theory also asserts
iterate convergence, while the original FISTA analysis only asserts function
value convergence.

We call the general technique of exploiting the proximal-gradient-amenable
structure of the dual of the augmented Lagrangian subproblem ``DEFBAL'',
standing for ``Dual Embedded Forward-Backward Augmented Lagrangian'', and it
could in future be used in conjunction with any variant of the proximal
gradient algorithm.  Each choice of possible proximal-gradient-class method
combined with an inexact augmented Lagrangian ``outer loop'' like the one
in~\cite{eckstein2013practical}, will yield a different ADMM-like algorithm
involving an inner loop containing a minimization of a quadratic perturbation
of $f$ followed by a minimization of a quadratic perturbation of $g$.  This
inner loop continues (unlike in the ADMM) until a subproblem accuracy
criterion is satisfied, after which the multiplier estimates are updated.
Depending on the specific proximal gradient and inexact augmented Lagrangian
variants selected, however, numerous details of the resulting algorithm could
vary.

Section~\ref{sec:numerical-experiments} compares the ordinary ADMM and the
ALM-ADSS method, using the relative error augmented Lagrangian outer loop
of~\cite{eckstein2013practical}, with two  DEFBAL combinations.  The first
combination melds the outer loop of~\cite{eckstein2013practical} with a CD
inner loop.  It has much better performance than ALM-ADSS, empirically
verifying the usefulness of the DEFBAL concept.  However, it does not perform
as well as the ordinary ADMM.  The second combination, however, replaces the
outer loop of~\cite{eckstein2013practical} with the method of
Section~\ref{sec:extended-criterion}, adaptively setting the relaxation
factors.  This resulting combination is performs better than the first
combination, thus verifying the potential usefulness of the method of
Section~\ref{sec:extended-criterion}, an  appears to somewhat outperform the
ADMM, albeit at the cost of some algorithmic complexity.

\section{Parametric Duality and Augmented Lagrangians}
\label{sec:parametric-duality}
To prepare for the analysis of Section~\ref{sec:extended-criterion},
we now briefly review the general parametric duality framework
from~\cite[Chaps. 29-30]{rockafellar1970convex}
and~\cite{rockafellar1974conjugate}. Suppose we have a closed proper
convex function $F:\realn \times \realm \rightarrow \realinf$, and
we wish to solve the primal problem
\begin{equation} \label{eq:parametric-primal}
    \min_{x\in\realn} F(x, 0).
\end{equation}
The second argument of $F$ represents some perturbation of the primal
problem~\eqref{eq:parametric-primal}. Furthermore, $\partial F:
\realn\times\realm\rightrightarrows \realn\times\realm$ denotes the
subgradient mapping of $F$. Define $L$, known as the Lagrangian
of~\eqref{eq:parametric-primal}, to be the concave conjugate of $F$ with respect
to the second argument, that is
\begin{equation}
    L(x,p) = \inf_{u\in \realm} \big\{ F(x,u) - \inner{u}{p} \big\}.
    \label{eq:def-lagrange}
\end{equation}
Next, we define $D$ to be the concave conjugate of $L$ with respect to the
first argument, which is the full concave conjugate of $F$, that is
\begin{equation}
    D(s,p) = \inf_{\substack{x\in \realn \\ u \in \realm}} 
             \big\{ F(x,u) - \inner{u}{p} - \inner{x}{s} \big\},\label{eq:def-dual}
\end{equation}
and the dual problem of~\eqref{eq:parametric-primal} is defined as
\begin{equation}
    \max_{p\in \realm} D(0, p).\label{eq:parametric-dual}
\end{equation}
Furthermore, $\partial D: \realn\times\realm \rightrightarrows 
\realn\times\realm$ denotes the concave subgradient mapping of $D$, that is,
\begin{equation}\label{eq:subgradient-dual}
    (x, u) \in \partial D(s, p) 
    \;\; \Leftrightarrow\;\;
    (\forall\, s'\in\realn, \forall\,p'\in\realm)
    \;\,
    D(s', p') \leq D(s, p) - \inner{x}{s'-s} - \inner{u}{p'-p}.
\end{equation}
A simple application of Fenchel's inequality~\cite[Theorem
23.5]{rockafellar1970convex} shows that weak duality holds, that is, $F(x, 0)
\geq D(0, p)$ for all $x\in\realn$ and $p\in\realm$.
Strong duality, that is, having $x^*\in\real^n$ and $p^*\in\real^m$ such that
$F(x^*,0)=D(0,p^*)$, is equivalent to $(x^*,p^*)$ being a saddle point of $L$,
or $(0,0) \in \partial L(x^*,p^*)$, where $\partial L$ denotes the
convex-concave subgradient map of $L$. The point-to-set maps $\partial F$,
$\partial D$ and $\partial L$ are all maximal monotone, and the following equivalences
hold:
\begin{align}\label{eq:subgradient-relations}
\left(
\begin{array}{l}
\forall\, x,s\in\realn \\
\forall\,u,p\in\realm
\end{array}
\right) \quad
&&(s, p) &\in \partial F(x, u) 
&&\Leftrightarrow & (s, u) &\in \partial L(x, p) 
&&\Leftrightarrow & (x, u) &\in \partial D(s, p).
\end{align}
Generically, an augmented Lagranian algorithm for~\eqref{eq:parametric-primal}
is obtained by applying the proximal point algorithm~\cite{Roc76a} to the dual
function $d(p) \doteq D(0,p)$, as originally shown in~\cite{Roc76b}.  
This procedure takes the form
\begin{align} \label{dualppa}
p^{k+1} = \argmin_{p\in\real^m} \left\{
             d(p) + \frac{1}{2c_k}\norm{p - p^k}
          \right\},
\end{align}
Letting $u^{k+1} \doteq (1/c_k)(p^k - p^{k+1})$, the standard optimality
condition for~\eqref{dualppa} yields that $u^{k+1} \in \partial d(p^{k+1})$
for all $k \geq 0$.  Now, $d(p) = D(Ap)$, where $A$ is the $(n+m) \times m$
matrix taking $p \mapsto (0,p)$, that is, $A = [0 \;\; I]\transpose$.  By the
linear chain rule for subgradients~\cite[Theorem~23.9]{rockafellar1970convex}, $\partial d(p) \supseteq A\transpose \partial
D(Ap)$ (with equality under a standard regularity condition), hence
\begin{align*}
\partial d(p) &\supseteq
    \set{u}{\exists\,x\in\real^n : (x,u) \in \partial D(0,p)} 
 = \set{u}{\exists\,x\in\real^n : (0,p) \in \partial F(x,u)},
\end{align*}
Therefore, $p^{k+1} \in \real^m$ being the result of~\eqref{dualppa} is implied by
\begin{align*}
(\exists\,x^{k+1}\in\realn, u^{k+1}\in\realm) : &&
(0,p^{k+1}) &\in \partial F(x^{k+1},u^{k+1}) &
p^{k+1} = p^k - c_k u^{k+1},
\end{align*}
where the second relation is obtained by solving the equation defining
$u^{k+1}$ for $p^{k+1}$. Substituting this equation into the inclusion on the
left yields
\begin{align}
&& (0,p^k - c_k u^{k+1}) &\in \partial F(x^{k+1},u^{k+1}) \nonumber \\
\Leftrightarrow &&
(0,0) &\in \partial F(x^{k+1},u^{k+1}) + (0,c_k u^{k+1} - p^k)  
   \label{genericAugLagSubGrad} \\
\Leftrightarrow &&
(x^{k+1},u^{k+1})
&\in \argmin_{x\in\realn,u\in\realm}
    \left\{ F(x,u) - \inner{p^k}{u} + \frac{c_k}{2}\norm{u}^2\right\}.
    \label{generalAugLagMin}
\end{align}
Therefore, one way to implement~\eqref{dualppa} is to attempt to 
solve~\eqref{generalAugLagMin}, and then set $p^{k+1} = p^k - c_k u^{k+1}$ if
such a solution is found.\footnote{In the interest of brevity, we omit here a
discussion of the (very mild) conditions under which~\eqref{generalAugLagMin} is
guaranteed to be solvable.}  Our main interest in the next section is the
common situation, applicable to all the commonly used parametric duality
setups, that minimizing $F(x,u) - \inner{p^k}{u} + \frac{c_k}{2}\norm{u}^2$
over $u$ is a straightforward closed-form operation for any given $x$, but the
entire exact mininization~\eqref{generalAugLagMin} may be time consuming and
could thus be desirable to perform approximately.

\section{Relative-Error Over- or Under-Relaxed Augmented
         Lagrangian Methods}
\label{sec:extended-criterion}
This section extends the approximate augmented Lagrangian method of
\cite{eckstein2013practical} to include a relaxation parameter $\rho_k$ in the
multiplier update step --- that is, for some positive scalar $\rho_k$, each
iteration's change to the multiplier estimate $p^k$ is multiplied by $\rho_k$
as compared to the algorithm in~\cite{eckstein2013practical}.  The analysis
resembles that in~\cite{eckstein2013practical}, but contains significant
additional elements.

\subsection{An abstract method of multipliers with relaxation}
\label{sec:abstract-method}
We begin by generalizing (24)-(26) of~\cite{eckstein2013practical} to include
relaxation factors $\rho_k$.  Specifically, suppose we have sequences
$\{x^k\}^\infty_{k=1} \subset \realn$, $\{w^k\}^\infty_{k=0}\subset
\real^{n}$, $\{s^k\}^\infty_{k=1}\subset \real^{n}$,
$\{p^k\}^\infty_{k=0}\subset \realm$, $\{u^k\}^\infty_{k=1}\subset
\realm$, $\{\rho_k\}^\infty_{k=0} \subset \real$,
$\{c_k\}^\infty_{k=0}\subset \real$ with $\inf_k\{c_k\} > 0$, and
$\epsilon \in (0,1)$ that obey the following recursions for all $k \geq 0$:
\begin{align}
    (s^{k+1}, 0)
    & \in \partial F(x^{k+1}, u^{k+1}) + (0,c_k u^{k+1} - p^k) 
    \label{eq:alt-frame-min} \\
    \rho_k\left(\frac{2}{c_k}\right)
    \abs{\inner{x^{k+1} - w^k}{s^{k+1}}}  + \rho_k^2 \norm{s^{k+1}}^2
    & \leq \parent{2\rho_k - \rho_k^2 - \epsilon} \norm{u^{k+1}}^2 
    \label{eq:alt-frame-approx}                        \\
    2\rho_k - \rho_k^2 - \epsilon &\geq 0 \label{eq:alt-frame-eps} \\
    w^{k+1} & = w^k - \rho_k c_k s^{k+1} 
    \label{eq:alt-frame-w}                               \\
    \bar{p}^k & = p^k - c_k u^{k+1} \label{eq:alt-frame-bar-p} \\
    p^{k+1} & = (1-\rho_k) p^k + \rho_k \bar p^k. \label{eq:alt-frame-p}
\end{align}
The initial iterates $p^0\in$ and $w^0\in\realn$ are arbitrary. The
inclusion~\eqref{eq:alt-frame-min} above is equivalent to
\begin{align*}
    (s^{k+1},0) & \in \partial_{x,u}
    \!\left[F(x,u) - \inner{p^k}{u} + \frac{c_k}{2}\norm{u}^2\right]_{\substack{x = x^{k+1} \\ u = u^{k+1}}},
\end{align*}
that is, one minimizes the modified parametric primal function
\begin{equation*}
    F_k(x,u) \doteq F(x,u) - \inner{p^k}{u} + \frac{c_k}{2}\norm{u}^2
    \end{equation*}
approximately with respect to $x$ and exactly with respect to $u$, leaving a
subgradient whose $u$ component is zero and whose $x$ component is $s^{k+1}$.
One may think of $F_k$ as an augmented Lagrangian for the problem of
minimizing $F(x,u)$ subject to $u=0$, with $-p^k$ representing the Lagrange
multiplier estimate and $(c_k/2)\smallnorm{u}^2$ being a quadratic penalty for
violating the constraints. Together with~\eqref{eq:alt-frame-bar-p}, the
condition~\eqref{eq:alt-frame-min} may be seen as an abstract representation
of approximately minimizing the augmented Lagrangian with the penalty
parameter set to $c_k$, after which the ordinary augmented Lagrangian step
would yield the new multiplier vector $\bar p^k$.  The
calculation~\eqref{eq:alt-frame-p} is an over- or under-relaxation of that
standard multiplier adjustment step by a factor of $\rho_k$.
Substituting~\eqref{eq:alt-frame-bar-p} into~\eqref{eq:alt-frame-p} and
simplifying, one may rewrite~\eqref{eq:alt-frame-p} as $p^{k+1}=p^k - \rho_k
c_k u^{k+1}$.

Once~\eqref{eq:alt-frame-min}-\eqref{eq:alt-frame-eps} are satisfied, the
remaining recursions~\eqref{eq:alt-frame-w}-\eqref{eq:alt-frame-p} encode
straightforward, closed-form calculations.  However, starting from any $p^k$,
$w^k$, and $c_k$, finding $s^{k+1}$, $u^{k+1}$, and $\rho_k$ that
satisfy~\eqref{eq:alt-frame-min}-\eqref{eq:alt-frame-eps} will in general be a
nontrivial task requiring some auxiliary iterative ``inner loop'' algorithm.
We do not consider here how this inner loop might be implemented, a question
that we will consider later in Definition~\ref{def:subSolver} and (for a
particular class of parametric primal functions~$F$) and
Section~\ref{sec:admm-like-framework}. For the moment, we simply consider the
properties of sequences satisfying the above recursions.

For any $k \geq 0$, applying the quadratic formula
to~\eqref{eq:alt-frame-eps} yields that an equivalent condition is that $\rho_k$ 
lies between the two values
\begin{equation*}
\frac{-2 \pm \sqrt{2^2 - 4(-1)(-\epsilon)}}{2\cdot(-1)}
= \frac{2 \mp \sqrt{4-4\epsilon}}{2}
= 1 \mp \sqrt{1-\epsilon},
\end{equation*}
that is,
\begin{equation} \label{eq:rho-range}
(\forall\,k\geq 0) \qquad
1 - \sqrt{1-\epsilon} \leq \rho_k \leq 1 + \sqrt{1-\epsilon}.
\end{equation}
Thus, \eqref{eq:alt-frame-eps} implies that $0 < \rho_k < 2$ for all $k$, and
in fact that $\inf_k \rho_k > 0$ and $\sup_k \rho_k < 2$.

Substituting~\eqref{eq:alt-frame-bar-p} into the rightmost term
in~\eqref{eq:alt-frame-min}, one obtains the equivalent inclusions
\begin{align}
&&  
  (s^{k+1}, 0) & \in \partial F(x^{k+1}, u^{k+1}) - (0,\bar p^k) && \nonumber \\
\Leftrightarrow &&
  (s^{k+1}, \bar p^k) & \in \partial F(x^{k+1}, u^{k+1}) \nonumber  \\
\Leftrightarrow &&
  (s^{k+1}, u^{k+1}) & \in \partial L(x^{k+1}, \bar p^{k})
  && [\text{using~\eqref{eq:subgradient-relations}}].  \label{eq:saddle-point-l}
\end{align}
The collection of recursions~\eqref{eq:alt-frame-min}-\eqref{eq:alt-frame-p}
thus remains unaltered if one substitutes~\eqref{eq:saddle-point-l}
for~\eqref{eq:alt-frame-min}.  We now establish an important contractive
property of~\eqref{eq:alt-frame-min}-\eqref{eq:alt-frame-p}:
\begin{lemma} \label{lemma:shrinkage}
    Suppose that $(x^*,p^*)$ is a saddle point of the Lagrangian $L$ as
    defined in~\eqref{eq:def-lagrange}. In the
    setting~\eqref{eq:alt-frame-min}-\eqref{eq:alt-frame-p}, for all $k\geq0$,
    \begin{equation}\label{eq:shrinkage}
        \norm{p^{k+1}-p^*}+\norm{w^{k+1} - x^*}^2 \leq \norm{p^k - p^*}^2 + \norm{w^k - x^*}^2 - \epsilon c_k^2 \norm{u^{k+1}}^2,
    \end{equation}
    and therefore the sequence $\big\{ \smallnorm{p^{k+1} - p^{*}}^2 +
    \smallnorm{w^{k+1}-x^*}^2 \big\}$ is nonincreasing.
\end{lemma}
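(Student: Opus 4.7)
The plan is to prove the inequality by expanding both squared norms, lower-bounding the resulting cross terms via monotonicity of the saddle subdifferential $\partial L$, and invoking the approximation criterion~\eqref{eq:alt-frame-approx} to absorb what remains. Using the simplified form $p^{k+1} = p^k - \rho_k c_k u^{k+1}$ noted immediately after \eqref{eq:alt-frame-p} together with \eqref{eq:alt-frame-w}, I would first expand
\begin{align*}
\norm{p^{k+1}-p^*}^2 + \norm{w^{k+1}-x^*}^2
&= \norm{p^k-p^*}^2 + \norm{w^k-x^*}^2 \\
&\quad - 2\rho_k c_k \bigl[\inner{u^{k+1}}{p^k-p^*} + \inner{s^{k+1}}{w^k-x^*}\bigr] \\
&\quad + \rho_k^2 c_k^2 \bigl(\norm{u^{k+1}}^2 + \norm{s^{k+1}}^2\bigr).
\end{align*}

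Next, I would rewrite $p^k - p^* = c_k u^{k+1} + (\bar p^k - p^*)$ using \eqref{eq:alt-frame-bar-p}, which extracts a clean $c_k \norm{u^{k+1}}^2$ contribution and leaves $\inner{u^{k+1}}{\bar p^k - p^*}$. Applying monotonicity of $\partial L$ (inherited from $\partial F$ via \eqref{eq:subgradient-relations}) to the rewritten subproblem condition \eqref{eq:saddle-point-l}, $(s^{k+1}, u^{k+1}) \in \partial L(x^{k+1}, \bar p^k)$, together with the saddle-point condition $(0,0) \in \partial L(x^*, p^*)$, should yield $\inner{s^{k+1}}{x^{k+1} - x^*} + \inner{u^{k+1}}{\bar p^k - p^*} \geq 0$. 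Substituting this and observing that $\inner{s^{k+1}}{x^{k+1} - x^*} - \inner{s^{k+1}}{w^k - x^*} = \inner{s^{k+1}}{x^{k+1} - w^k}$ --- the $x^*$ terms cancel, which is exactly the point of the $w^k$ recursion \eqref{eq:alt-frame-w} --- the goal reduces, after dividing through by $c_k^2$, to
\[
\frac{2\rho_k}{c_k}\inner{s^{k+1}}{x^{k+1} - w^k} + \rho_k^2\norm{s^{k+1}}^2 \leq (2\rho_k - \rho_k^2 - \epsilon)\norm{u^{k+1}}^2,
\]
which is immediate from \eqref{eq:alt-frame-approx} after bounding the inner product by its absolute value. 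The nonincrease statement then follows at once, since \eqref{eq:alt-frame-eps} forces $2\rho_k - \rho_k^2 - \epsilon \geq 0$ and hence $-\epsilon c_k^2 \norm{u^{k+1}}^2 \leq 0$.

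The main obstacle I foresee is getting the sign convention for $\partial L$ right. Rather than reasoning directly about the convex-concave structure of $L$, I would lean on the third equivalence in \eqref{eq:subgradient-relations}, $(s,u) \in \partial L(x,p) \Leftrightarrow (s,p) \in \partial F(x,u)$, and transport monotonicity of the standard convex subdifferential $\partial F$ through that equivalence; this cleanly produces the monotonicity inequality for $\partial L$ with both inner products entering with a $+$ sign. Once that point is nailed down, everything else is mechanical: the $w^k$ recursion and the right-hand side of \eqref{eq:alt-frame-approx} have been engineered so that every $\rho_k$-dependent contribution lines up exactly, leaving the $\epsilon\norm{u^{k+1}}^2$ term as surplus contraction.
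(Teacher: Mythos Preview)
Your proposal is correct and follows essentially the same route as the paper: expand both squared norms using $p^{k+1}=p^k-\rho_k c_k u^{k+1}$ and~\eqref{eq:alt-frame-w}, split $p^k-p^*$ via $\bar p^k$, apply monotonicity of $\partial L$ at the saddle point against~\eqref{eq:saddle-point-l}, and then invoke~\eqref{eq:alt-frame-approx} to absorb the residual terms. The only cosmetic differences are that the paper expands the $p$- and $w$-norms separately before adding, and appeals directly to monotonicity of $\partial L$ rather than transporting it from $\partial F$; also note that the final nonincrease statement needs only $\epsilon>0$, not~\eqref{eq:alt-frame-eps}.
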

\begin{proof}
Fixing any $k \geq 0$,
\begin{align} 
        \norm{p^{k+1} - p^*}^2 & = \norm{p^k - \rho_k c_k u^{k+1} - p^*}^2                       \nonumber \\
                               & = \norm{p^k - p^*}^2 - 2\rho_k c_k\inner{u^{k+1}}{p^k - p^*} + \rho_k^2 c_k^2\norm{u^{k+1}}^2
                               \nonumber \\
                               & = \norm{p^k - p^*}^2 - 2\rho_k c_k\inner{u^{k+1}}{p^k - \bar{p}^k + \bar{p}^k - p^*} + \rho_k^2 c_k^2\norm{u^{k+1}}^2
                               \nonumber \\
                               & = \norm{p^k - p^*}^2 - 2\rho_k c_k\inner{u^{k+1}}{p^k - \bar{p}^k} - 2\rho_k c_k\inner{u^{k+1}}{\bar{p}^k - p^*} + \rho_k^2 c_k^2\norm{u^{k+1}}^2 
                               \nonumber \\
                               & \overset{\text{(*)}}{=} \norm{p^k - p^*}^2 - 2\rho_k c_k^2\norm{u^{k+1}}^2 - 2\rho_k c_k\inner{u^{k+1}}{\bar{p}^k - p^*} + \rho_k^2 c_k^2\norm{u^{k+1}}^2
                               \nonumber \\
                               & = \norm{p^k - p^*}^2 + (\rho_k^2 - 2 \rho_k) c_k^2\norm{u^{k+1}}^2 - 2\rho_k c_k\inner{u^{k+1}}{\bar{p}^k - p^*},
                               \label{eq:shrink-p-adaptive}
\end{align}
where the equality (*) uses that $p^k - \bar{p}^k = c_k u^{k+1}$.
Next, consider the quantity $\norm{w^{k+1}-x^*}^2$, which may be rewritten
\begin{align}
    \norm{w^{k+1} - x^*}^2
    & = \norm{w^k - \rho_k c_k s^{k+1} - x^*}^2  \nonumber         \\
    & = \norm{w^k - x^*}^2 - 2\rho_k c_k\inner{w^k - x^*}{s^{k+1}} + \rho_k^2 c_k^2\norm{s^{k+1}}^2  \nonumber     \\
    & = \norm{w^k - x^*}^2 - 2\rho_k c_k\inner{w^k - x^{k+1} + x^{k+1} - x^*}{s^{k+1}} + \rho_k^2 c_k^2\norm{s^{k+1}}^2  \nonumber \\
    & = \norm{w^k - x^*}^2 - 2\rho_k c_k\inner{w^k - x^{k+1}}{s^{k+1}}  \nonumber  \\
    & \qquad\qquad 
    - 2\rho_k c_k\inner{x^{k+1} - x^*}{s^{k+1}} + \rho_k^2 c_k^2\norm{s^{k+1}}^2.
    \label{eq:shrink-w-adaptive}
    \end{align}
Adding~\eqref{eq:shrink-w-adaptive} to~\eqref{eq:shrink-p-adaptive} produces
\begin{align}
\norm{p^{k+1}-p^*}+\norm{w^{k+1} - x^*}^2
    & = \norm{p^k - p^*}^2 + \norm{w^k - x^*}^2 + (\rho_k^2-2\rho_k) c_k^2 \norm{u^{k+1}}^2 \nonumber \\
    & \qquad -2\rho_k c_k \inner{u^{k+1}}{\bar{p}^k - p^*}- 2\rho_k c_k \inner{x^{k+1} - x^*}{s^{k+1}} \nonumber \\
    & \qquad -2\rho_k c_k \inner{w^k - x^{k+1}}{s^{k+1}} + \rho_k^2 c_k^2 \norm{s^{k+1}}^2 \nonumber \\
    & = \norm{p^k - p^*}^2 + \norm{w^k - x^*}^2 - (2\rho_k - \rho_k^2) c_k^2 \norm{u^{k+1}}^2   \nonumber  \\
    & \qquad - 2\rho_k c_k \Big[\inner{\bar{p}^k - p^*}{u^{k+1}}  + \inner{x^{k+1} - x^*}{s^{k+1}} \Big]  \nonumber  \\
    & \qquad - 2\rho_k c_k \inner{w^k -x^{k+1}}{s^{k+1}} + \rho_k^2 c_k^2 \norm{s^{k+1}}^2.
    \label{eq:shrink-pw-eq-adaptive}
\end{align}
Since $\big(x^*, p^* \big)$ was assumed to be a saddle point of $L$, we have $(0,
0)\in \partial L\parent{x^*,p^*}$, which, together with the monotonicity of
$\partial L$ and $\big(s^{k+1},u^{k+1}\big) \in \partial L\big( x^{k+1},
\bar{p}^k \big)$ from~\eqref{eq:saddle-point-l}, means that
\begin{equation}
    \inner{x^{k+1} - x^*}{s^{k+1}-0} + \inner{\bar{p}^k - p^*}{u^{k+1}-0} \geq 0.\label{eq:monotonicity}
    \end{equation}
Substituting~\eqref{eq:monotonicity} into~\eqref{eq:shrink-pw-eq-adaptive}
and using that both $\rho_k>0$ and $c_k>0$, one obtains
\begin{multline}
\norm{p^{k+1}-p^*}+\norm{w^{k+1} - x^*}^2
     \leq \norm{p^k - p^*}^2 + \norm{w^k - x^*}^2 - (2\rho_k-\rho_k^2)  c_k^2 \norm{u^{k+1}}^2   \\
     - 2\rho_k c_k \inner{w^k - x^{k+1}}{s^{k+1}} + \rho_k^2 c_k^2\norm{s^{k+1}}^2.
    \label{eq:shrink-pw-almost-adaptive}
\end{multline}
By the design of the error criterion~\eqref{eq:alt-frame-approx},
the last two terms of~\eqref{eq:shrink-pw-almost-adaptive} may be
``buried'' in the term $c_k \norm{u^{k+1}}$. More specifically,
\begin{align}
        -2 \rho_k c_k \inner{w^k - x^{k+1}}{s^{k+1}} + \rho_k^2 c_k^2 \norm{s^{k+1}}^2
         & = c_k^2 \Big[\parent{\frac{- 2\rho_k }{c_k}} \inner{w^k - x^{k+1}}{s^{k+1}}
         + \rho_k^2 \norm{s^{k+1}}^2 \Big]  \nonumber \\
         & \leq c_k^2 \Big[\rho_k \parent{\frac{2}{c_k}} \abs{\inner{w^k - x^{k+1}}{s^{k+1}}} + \rho_k^2 \norm{s^{k+1}}^2 \Big] \nonumber \\
         & \leq (2\rho_k - \rho_k^2 - \epsilon) c_k^2 \norm{u^{k+1}}^2,
         \label{eq:bury-error-terms}
\end{align}
with the second inequality following from~\eqref{eq:alt-frame-approx}.
Substituting~\eqref{eq:bury-error-terms} into~\eqref{eq:shrink-pw-almost-adaptive}
then produces
\begin{equation} 
\label{eq:shrink-pw-adaptive}
    \norm{p^{k+1}-p^*}+\norm{w^{k+1} - x^*}^2 
    \leq \norm{p^k - p^*}^2 + \norm{w^k - x^*}^2 - \epsilon c_k^2 \norm{u^{k+1}}^2,
\end{equation}
which is~\eqref{eq:shrinkage}.  Since $\epsilon > 0$ and~\eqref{eq:shrinkage} holds
for arbitrary $k \geq 0$, it is immediate that the sequence $\big\{
\smallnorm{p^{k+1} - p^{*}}^2 +
\smallnorm{w^{k+1}-x^*}^2 \big\}$ must be nonincreasing.
\end{proof}

The inequality~\eqref{eq:shrinkage} establishes Fej\'er monotonicity of the
sequence $\big\{(p^k,w^k)\big\}$ to the set of saddle points of $L$.  The
resulting convergence properties are summarized in the following proposition:

\begin{proposition}\label{prop:convergence}
    Suppose there exists at least one saddle point of the Lagrangian $L$
    defined in~\eqref{eq:def-lagrange}. In the
    setting~\eqref{eq:alt-frame-min}-\eqref{eq:alt-frame-p}, including the
    immediately preceding assumptions, the following hold:
    \begin{itemize}[nosep,label=-]
        \item The sequences $\{\bar p^k\}$, $\{p^k\}$ and $\{w^k\}$ are bounded.
        \item $u^k \rightarrow 0$, $c_k u^{k+1} \rightarrow 0$ and $s^k \rightarrow 0$.
        \item $\big\{F(x^k, u^k)\big\}_{k=0}^\infty$ converges to the optimal value of 
        the primal problem~\eqref{eq:parametric-primal}, which is the same as
        the optimal value of the dual problem~\eqref{eq:parametric-dual}.
        \item $\big\{D(s^{k+1}, \bar p^k\big\}_{k=0}^\infty$ also converges to
        the optimal value of the primal and dual
        problems~\eqref{eq:parametric-primal} and~\eqref{eq:parametric-dual}.
        \item All accumulation points of $\{x^k\}$ are solutions to the primal problem~\eqref{eq:parametric-primal}.
        \item All accumulation points of $\{\bar p^k\}$ and $\{p^k\}$ are solutions to the dual problem~\eqref{eq:parametric-dual}.
    \end{itemize}
\end{proposition}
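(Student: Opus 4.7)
The plan is to use Lemma~\ref{lemma:shrinkage} as the engine: Fej\'er monotonicity drives the boundedness claims, telescoping the shrinkage inequality produces the null sequences, the error criterion~\eqref{eq:alt-frame-approx} combined with the Fenchel equality converts these null sequences into a vanishing duality gap, and closedness of $F$ and $D$ promotes that to optimality of accumulation points.

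First I would fix any saddle point $(x^\ast,p^\ast)$ of $L$, which exists by hypothesis. Lemma~\ref{lemma:shrinkage} immediately gives boundedness of $\{p^k\}$ and $\{w^k\}$. Telescoping~\eqref{eq:shrinkage} yields
\[
\sum_{k\geq 0} \epsilon c_k^2 \norm{u^{k+1}}^2 \;\leq\; \norm{p^0 - p^\ast}^2 + \norm{w^0 - x^\ast}^2 \;<\; \infty,
\]
so $c_k u^{k+1}\to 0$, and since $\inf_k c_k > 0$ also $u^{k+1}\to 0$. The error criterion~\eqref{eq:alt-frame-approx} then forces $\rho_k^2 \norm{s^{k+1}}^2 \leq (2\rho_k - \rho_k^2 - \epsilon)\norm{u^{k+1}}^2$; because~\eqref{eq:rho-range} bounds $\rho_k$ away from $0$ and $2$, $s^{k+1}\to 0$. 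Finally $\bar p^k = p^k - c_k u^{k+1}$ is bounded because $\{p^k\}$ is bounded and $c_k u^{k+1}\to 0$.

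Next I would quantify the primal--dual gap along the iterates. From~\eqref{eq:saddle-point-l} we have $(s^{k+1},\bar p^k)\in\partial F(x^{k+1},u^{k+1})$, and from~\eqref{eq:def-dual} $D(s,p)=-F^\ast(s,p)$, where $F^\ast$ is the (convex) conjugate of $F$. Fenchel equality on the subgradient graph therefore yields
\[
F(x^{k+1},u^{k+1}) - D(s^{k+1},\bar p^k) \;=\; \inner{x^{k+1}}{s^{k+1}} + \inner{u^{k+1}}{\bar p^k}.
\]
The second inner product tends to zero since $u^{k+1}\to 0$ and $\{\bar p^k\}$ is bounded. For the first, decompose $\inner{x^{k+1}}{s^{k+1}} = \inner{w^k}{s^{k+1}} + \inner{x^{k+1}-w^k}{s^{k+1}}$; the first piece tends to zero from boundedness of $\{w^k\}$ together with $s^{k+1}\to 0$, while the error criterion~\eqref{eq:alt-frame-approx} bounds the second piece by $\tfrac{c_k (2\rho_k-\rho_k^2-\epsilon)}{2\rho_k}\norm{u^{k+1}}^2$, and $c_k \norm{u^{k+1}}^2 = \norm{c_k u^{k+1}}\cdot\norm{u^{k+1}}\to 0$ with $\rho_k$ bounded away from $0$. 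So the gap vanishes. Weak duality then sandwiches the common value: $F(x^{k+1},u^{k+1})\geq D(0,p^\ast)+\inner{u^{k+1}}{p^\ast}$ gives $\liminf F(x^{k+1},u^{k+1})\geq v^\ast$, and $D(s^{k+1},\bar p^k)\leq F(x^\ast,0)-\inner{x^\ast}{s^{k+1}}$ gives $\limsup D(s^{k+1},\bar p^k)\leq v^\ast$, so both sequences converge to $v^\ast$.

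Finally, for accumulation-point optimality: if $x^{k_j}\to \hat x$ then $u^{k_j}\to 0$ along the same subsequence, and lower semicontinuity of the closed convex $F$ yields $F(\hat x,0)\leq \liminf F(x^{k_j},u^{k_j}) = v^\ast$, while weak duality gives the reverse inequality. The sequences $\{p^k\}$ and $\{\bar p^k\}$ share the same accumulation points because $\bar p^k - p^k = -c_k u^{k+1}\to 0$ and $p^{k+1}-p^k = -\rho_k c_k u^{k+1}\to 0$; if $\bar p^{k_j}\to \hat p$, upper semicontinuity of the closed concave $D$ gives $D(0,\hat p)\geq \limsup D(s^{k_j+1},\bar p^{k_j}) = v^\ast$, and weak duality supplies the reverse. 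The main obstacle is handling $\inner{x^{k+1}}{s^{k+1}}$ without a priori boundedness of $\{x^{k+1}\}$: the Lemma controls $\{w^k\}$ but not $\{x^{k+1}\}$, and the particular form of~\eqref{eq:alt-frame-approx}, anchoring $s^{k+1}$ against the displacement $x^{k+1}-w^k$ scaled by $1/c_k$, is exactly what bridges this gap.
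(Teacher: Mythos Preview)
Your proof is correct and follows essentially the same route as the paper: telescope Lemma~\ref{lemma:shrinkage} for boundedness and null sequences, use the error criterion~\eqref{eq:alt-frame-approx} to control both $s^{k+1}$ and the cross term $\inner{x^{k+1}-w^k}{s^{k+1}}$, invoke Fenchel's equality on the relation $(s^{k+1},\bar p^k)\in\partial F(x^{k+1},u^{k+1})$, and finish with semicontinuity. The one noteworthy difference is in how you pin down the optimal value: the paper first squeezes $D(s^{k+1},\bar p^k)$ directly---lower bound via the concave subgradient inequality~\eqref{eq:subgradient-dual} at $(s^{k+1},\bar p^k)$, upper bound via upper semicontinuity along a subsequence where $\bar p^k$ converges---and only then transfers to $F$ via Fenchel equality. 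You instead show the gap $F-D\to 0$ first and then sandwich both simultaneously using Fenchel's \emph{inequality} anchored at $(0,p^\ast)$ and $(x^\ast,0)$ respectively. Your version is a bit more symmetric and avoids the subsequence extraction; the paper's version is marginally more self-contained in that it never needs to name $x^\ast$ until the very last accumulation-point step.
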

\begin{proof}
Let $(x^*,p^*)$ be any saddle point of $L$, assumed to exist by hypothesis.
Since~\eqref{eq:shrinkage} holds for all $k \geq 0$ by Lemma~\ref{lemma:shrinkage}, we may sum it over $k$ to yield
\begin{align}
    \norm{p^{k+1}-p^*}^2 + \norm{w^{k+1} - x^*}^2 
    &\leq \norm{p^0 - p^*}^2 + \norm{w^0 - x^*}^2 
           - \epsilon \sum_{i=0}^k c_i^2 \norm{u^{i+1}}^2 \label{eq:telescope} \\
    &\leq \norm{p^0 - p^*}^2 + \norm{w^0 - x^*}^2         \nonumber
\end{align}
for all $k \geq 0$. Therefore, $\{p^k\}$ and
$\{w^k\}$ must be bounded. Since the left side of~\eqref{eq:telescope} is
nonnegative, the sequence $\brac{c_k^2
\smallnorm{u^{k+1}}^2}$ must be summable, so $c_k u^{k+1} \rightarrow 0$.
Since $c_k$ is bounded away from zero, $\brac{\smallnorm{u^{k+1}}^2}$ is also
summable and so $u^{k+1} \rightarrow 0$.  

Since $p^k - \bar{p}^k = c_k u^{k+1}$ for all $k\geq 0$, having $c_k u^{k+1}
\rightarrow 0$ means that $p^k - \bar p^k \rightarrow 0$, so that 
$\{p^k\}$ and $\{\bar p^k\}$ have identical sets of limit points and the
boundedness of $\{p^k\}$ implies boundedness of $\{\bar p^k\}$. Furthermore,
since $0 < \rho_k < 2$ for all $k$ as shown in~\eqref{eq:rho-range}, it is
also the case that $\rho_k c_k u^{k+1} \to 0$ and hence that $p^k - p^{k+1}
\rightarrow 0$, since $p^k - p^{k+1} = \rho_k c_k u^{k+1}$ 
for all $k \geq 0$ by simple algebraic
manipulation of~\eqref{eq:alt-frame-p}.
Multiplying~\eqref{eq:alt-frame-approx} by $c_k^2$ or restating the final
inequality in~\eqref{eq:bury-error-terms}, one has
\begin{equation}  \label{eq:squeeze-cond}
(\forall\,k \geq 0) \qquad
2 c_k \rho_k \abs{\inner{w^k - x^{k+1}}{s^{k+1}}} + \rho_k^2 \norm{s^{k+1}}^2 
   \leq (2\rho_k - \rho_k^2 - \epsilon) c_k^2 \norm{u^{k+1}}^2.
\end{equation}
Since the maximum possible value of $2\rho_k - \rho_k^2 - \epsilon$ is
$1-\epsilon$, as may be seen by an elementary analysis of the concave quadratic function
$\rho \mapsto 2\rho - \rho^2 - \epsilon$, having $c_k u^{k+1} \to 0$ implies
that the right side of~\eqref{eq:squeeze-cond} converges to zero.  The left side,
being nonnegative, must also converge to zero, and hence its two constituent
nonnegative sequences $\brac{2\rho_k c_k \abs{\inner{w^k -
x^{k+1}}{s^{k+1}}}}$ and $\brac{\rho_k^2 c_k^2 \smallnorm{s^{k+1}}^2}$
converge to zero as well. Using that $\brac{\rho_k}$ and $\brac{c_k}$ are
bounded away from zero (see~\eqref{eq:rho-range} for the case of
$\{\rho_k\}$), it follows that $\brac{\smallnorm{s^{k+1}}^2}$ and
$\brac{\abs{\inner{w^k - x^{k+1}}{s^{k+1}}}}$ also converge to zero.
Immediately, one may deduce that $\inner{w^k - x^{k+1}}{s^{k+1}} \rightarrow 0$ and
$s^{k+1} \rightarrow 0$. Next, we can write
\begin{equation}\label{eq:decompose-inner}
    \inner{x^{k+1}}{s^{k+1}} = \inner{w^k}{s^{k+1}} - \inner{w^k-x^{k+1}}{s^{k+1}},
\end{equation}
and notice that since $\{w^k\}$ is bounded and $s^{k+1} \rightarrow 0$, one
has $\inner{w^k}{s^{k+1}} \rightarrow 0$. Since it has already been
established that $\inner{w^k-x^{k+1}}{s^{k+1}} \rightarrow 0$, it follows that
$\inner{x^{k+1}}{s^{k+1}} \rightarrow 0$. 

We now consider the dual problem.  Applying the last equivalence
in~\eqref{eq:subgradient-relations} to $(s^{k+1}, u^{k+1})\in \partial
L(x^{k+1}, \bar{p}^k)$ as established in~\eqref{eq:saddle-point-l} yields
$(x^{k+1}, u^{k+1})\in \partial D(s^{k+1}, \bar{p}^k)$ for all $k\geq 0$.
Using the subgradient inequality in~\eqref{eq:subgradient-dual} with
$s=s^{k+1}$, $p=\bar p^k$, $s'=0$, $p'=p^*$, $x=x^{k+1}$, and $u=u^{k+1}$
yields for all $k\geq 0$ that
\begin{equation} \label{dualSqueeze1}
    D(0, p^*) \leq D(s^{k+1}, \bar{p}^k) - \inner{x^{k+1}}{0 - s^{k+1}} - \inner{u^{k+1}}{p^* - \bar{p}^k}.
\end{equation}
Isolating the first term on the right of this inequality leads to
\begin{equation*}
    D(s^{k+1}, \bar p^{k}) \geq D(0, p^*) - \inner{x^{k+1}}{s^{k+1}} + \inner{u^{k+1}}{p^* - \bar p^k}.
\end{equation*}
Passing to the limit $k \rightarrow \infty$ and using $u^{k+1} \to 0$,
$\inner{x^{k+1}}{s^{k+1}} \rightarrow 0$ (as recently shown), and the
boundedness of $\{\bar p^k\}$, we have
\begin{equation}\label{eq:liminf-dual}
    \liminf_{k \rightarrow \infty} D(s^{k+1}, \bar p^k) \geq D(0, p^*).
\end{equation}
Let $\mathcal{K}$ be a subsequence such that $D(s^{k+1}, \bar p^k)
\rightarrow_{\mathcal{K}} \limsup_{k \rightarrow \infty} D(s^{k+1}, \bar p^k)$.
By the boundedness of $\{\bar p^k\}$, there exists a convergent subsequence
$\mathcal{K}' \subseteq \mathcal{K}$ such that $\{\bar p^k\}_{k \in
\mathcal{K}'}$ converges to some limit $p^{\infty}$. We then observe that
\begin{align*}
    \qquad\qquad D(0, p^*) 
    & \geq D(0, p^{\infty})    && \text{[since $p^*$ is dual optimal]} \\
    & = D(\lim_{\substack{k \rightarrow \infty \\ k \in \mathcal{K}'}} s^{k+1}, \lim_{\substack{k \rightarrow \infty \\ k \in \mathcal{K}'}} \bar p^k) && \text{[since $s^{k+1} \rightarrow 0$, $\bar p^k \rightarrow_{\mathcal{K}'} p^{\infty}$]} \\
    & \geq \limsup_{\substack{k \rightarrow \infty \\ k \in \mathcal{K}'} } D(s^{k+1}, \bar p^k) && [\text{by the upper semicontinuity of~}D] \\
    & = \limsup_{k \rightarrow \infty} D(s^{k+1}, \bar p^k) && \text{[by the choice of $\mathcal{K}'$]}.
\end{align*}
In conjunction with~\eqref{eq:liminf-dual}, one then has
\begin{equation*}
    \liminf_{k \rightarrow \infty} D(s^{k+1}, \bar p^k) \geq D(0, p^*) \geq \limsup_{k \rightarrow \infty} D(s^{k+1}, \bar p^k),
\end{equation*}
so $\lim_{k \rightarrow \infty} D(s^{k+1}, \bar p^k) = D(0, p^*)$, that is, the
sequence of $D(s^{k+1}, \bar p^k)$ converges to the optimal value of the dual
problem~\eqref{eq:parametric-dual}.  If a saddle point of the Lagrangian
exists, this dual optimal value is the same as the optimal value of the primal problem.

Next, we turn to the primal problem. Since $(x^{k+1}, u^{k+1})\in \partial
D(s^{k+1}, \bar p^k)$, Fenchel's equality implies
\begin{equation*}
    F(x^{k+1}, u^{k+1}) 
    = D(s^{k+1}, \bar p^k) + \inner{x^{k+1}}{s^{k+1}} + \inner{u^{k+1}}{\bar p^k},
\end{equation*}
Passing to the $k \rightarrow
\infty$ limit and using that $\bar p^k$ is bounded, $u^{k+1} \rightarrow 0$, 
$\inner{x^{k+1}}{s^{k+1}} \rightarrow 0$, and $D(s^{k+1}, \bar p^k) \to D(0,
p^*)$ as just demonstrated, we have $F(x^{k+1}, u^{k+1}) \rightarrow D(0, p^*)
= F(x^*, 0)$.

It remains only to show that all accumulation points of $\{x^k\}$ are
solutions to the primal problem~\eqref{eq:parametric-primal} and all
accumulation points of $\{\bar p^k\}$ and $\{p^k\}$ are solutions to the dual
problem~\eqref{eq:parametric-dual}. Consider any  accumulation point
$x^{\infty}$ of $\{x^k\}$ with corresponding index subsequence $\mathcal{K}''
\subseteq \nat$. Since $u^{k+1} \rightarrow 0$ and $F$ is lower semicontinuous
and convex, we have
\begin{equation*}
    F(x^{\infty}, 0) \leq \liminf_{\substack{k \rightarrow \infty \\ k \in \mathcal{K}''}} F(x^{k+1}, u^{k+1}) = F(x^*, 0).
\end{equation*}
However, since $x^*$ is optimal, we must have $F(x^\infty, 0) \geq F(x^*, 0)$,
which leads to $F(x^{\infty}, 0) = F(x^*, 0)$.  The choice of the limit point
$x^\infty$ being arbitrary, one may conclude that all limit points of
$\{x^k\}$ are optimal for the primal problem.

Similarly, consider any limit
point $\bar p^\infty$ of $\{\bar p^k\}$, with corresponding subsequence
$\mathcal{K}'''$. Using that $s^{k+1} \rightarrow 0$ and that $D$ is upper
semicontinuous, 
\begin{equation*}
D(0,\bar p^\infty) \geq 
\limsup_{\substack{k\to\infty \\ k\in\mathcal{K}'''}} D(s^{k+1},\bar p^k) = D(0,p^*),
\end{equation*}
and so $\bar p^\infty$ is also optimal for the dual problem.  Thus, all limit
points of $\{\bar p^k\}$ are dual optimal.  Further, since it has already been
established that $\{\bar p^k\}$ and $\{p^k\}$ share the same set of limit
points, all accumulation points of $\{p^k\}$ are dual optimal as well.
\end{proof}

\begin{proposition}\label{prop:unbounded}
Consider the same setting as in Proposition~\ref{prop:convergence}, except
that no saddle point of the Lagrangian $L$ exists.  Then at least one of the
sequences $\{x^k\}$ or $\{\bar p^k\}$ is unbounded.
\end{proposition}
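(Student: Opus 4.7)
My plan is to argue by contraposition: assuming both $\brac{x^k}$ and $\brac{\bar p^k}$ are bounded, I will show that $L$ admits a saddle point, contradicting the hypothesis. The starting point is \eqref{eq:saddle-point-l}, which gives $(s^{k+1}, u^{k+1}) \in \partial L(x^{k+1}, \bar p^k)$ unconditionally (the proof of that equivalence nowhere invokes a saddle point). Since $\brac{(x^{k+1}, \bar p^k)}$ lies in a bounded subset of $\realn \times \realm$, I would extract an index subsequence $\K$ along which $x^{k+1} \to x^\infty$ and $\bar p^k \to \bar p^\infty$. If I can produce, along a further subsequence, $(s^{k+1}, u^{k+1}) \to (0,0)$, then maximal monotonicity of $\partial L$ (in finite dimensions, equivalently closedness of its graph) gives $(0,0) \in \partial L(x^\infty, \bar p^\infty)$, so $(x^\infty, \bar p^\infty)$ is the desired saddle point. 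Moreover, the error criterion \eqref{eq:alt-frame-approx} combined with the range \eqref{eq:rho-range} of $\rho_k$ immediately yields $\norm{s^{k+1}}^2 \leq K \norm{u^{k+1}}^2$ for a constant $K<\infty$ depending only on $\epsilon$, so the reduction is to finding a subsequence of $\K$ along which $u^{k+1} \to 0$.

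To force $u^{k+1} \to 0$, the plan is to first bound the auxiliary sequences $\brac{p^k}$ and $\brac{w^k}$. For $\brac{p^k}$, write $p^k = \bar p^k + c_k u^{k+1}$; since $\brac{(x^{k+1}, \bar p^k)}$ is bounded and a maximal monotone operator is locally bounded at interior points of its domain, $\brac{u^{k+1}}$ is bounded along $\K$ as long as $(x^\infty, \bar p^\infty) \in \interior(\dom \partial L)$. For $\brac{w^k}$, telescope $w^{k+1} = w^k - \rho_k c_k s^{k+1}$ using $\norm{s^{k+1}} \leq \sqrt{K}\norm{u^{k+1}}$. Once all four sequences are bounded, I would re-execute the calculation of Lemma~\ref{lemma:shrinkage} but with target $(\tilde x, \tilde p) = (x^\infty, \bar p^\infty)$ and any selection $(\tilde s, \tilde u) \in \partial L(x^\infty, \bar p^\infty)$. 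Monotonicity of $\partial L$ then gives $\inner{x^{k+1}-x^\infty}{s^{k+1}-\tilde s} + \inner{\bar p^k - \bar p^\infty}{u^{k+1}-\tilde u} \geq 0$, which after telescoping produces an inequality of the form $\epsilon \sum_k c_k^2 \norm{u^{k+1}}^2 \leq C_1 + 2\sum_k \rho_k c_k \bigl[\abs{\inner{x^{k+1}-x^\infty}{\tilde s}} + \abs{\inner{\bar p^k - \bar p^\infty}{\tilde u}}\bigr]$. Restricting to a subsequence along which $(x^{k+1}, \bar p^k) \to (x^\infty, \bar p^\infty)$ makes the bracketed terms shrink, and a Cesaro-type averaging argument forces $u^{k+1} \to 0$ along that subsequence.

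The hardest part is precisely the last step: without a saddle point, the clean Fej\'er monotonicity of Lemma~\ref{lemma:shrinkage} breaks, and the ``correction'' terms involving a potentially nonzero $(\tilde s, \tilde u)$ do not obviously summate to something finite. The cleaner alternative I would pursue in parallel is to recast the entire iteration \eqref{eq:alt-frame-min}--\eqref{eq:alt-frame-p} as a perturbed (inexact) proximal point scheme on an extended maximal monotone operator on $\realn \times \realm$ that simultaneously encodes $\partial L$ and the auxiliary $w$-dynamics, and then to invoke the classical fact~\cite{Roc76a} that bounded iterates of the proximal point algorithm imply the operator has a zero. A zero of such an extended operator corresponds exactly to a saddle point of $L$, closing the contradiction without needing delicate manipulation of the error criterion beyond the boundedness it already supplies.
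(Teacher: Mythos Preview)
Your proposal has a genuine gap, which you yourself flag: the ``correction'' terms coming from a nonzero $(\tilde s,\tilde u)\in\partial L(x^\infty,\bar p^\infty)$ do not telescope or summate, and the Cesaro-type patch you sketch is not an argument. Your side assumptions are also unjustified: nothing forces $(x^\infty,\bar p^\infty)\in\interior(\dom\partial L)$, and even if $\{u^{k+1}\}$ were bounded along a subsequence, $\{c_k\}$ need not be, so $p^k=\bar p^k+c_k u^{k+1}$ is not obviously bounded. The ``cleaner alternative'' of recasting the whole scheme as a proximal point iteration on some extended operator is a hand-wave; the $w$-dynamics make this nontrivial, and you do not specify the operator.

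The missing idea is a normal-cone truncation applied directly to $\partial L$. Choose $R$ large enough that the closed ball $B_R$ contains all $(x^{k+1},\bar p^k)$ (and a point of $\ri\dom\partial L$, so the sum rule applies), and set $T=\partial L+N_{B_R}$. Then $T$ is maximal monotone with bounded domain, hence has a zero $(x^*,p^*)$. Because the iterates lie in the \emph{interior} of $B_R$, one has $N_{B_R}(x^{k+1},\bar p^k)=\{0\}$ and therefore $(s^{k+1},u^{k+1})\in T(x^{k+1},\bar p^k)$. Now the entire calculation of Lemma~\ref{lemma:shrinkage} goes through verbatim with $(x^*,p^*)$ playing the role of the saddle point, since the only property used there is the monotonicity inequality~\eqref{eq:monotonicity}, which holds for $T$ at its zero. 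This yields the clean Fej\'er estimate~\eqref{eq:shrinkage} and hence $u^{k+1}\to 0$, $s^{k+1}\to 0$ on the full sequence, with no summability obstruction. Passing to a limit point $(x^\infty,p^\infty)$, which lies in the interior of $B_R$, closedness of $T$ gives $(0,0)\in T(x^\infty,p^\infty)=\partial L(x^\infty,p^\infty)$, the desired contradiction.
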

\begin{proof}
See~\ref{app:unbounded-proof}.
\end{proof}

\subsection{A special case of the duality framework: problems in Fenchel-Rockafellar form}
\label{sec:sum-of-two-convex}
The following special case of the parametric duality framework will receive
particular attention later in this paper: let
$f:\realn\rightarrow\mathbb{R}\cup\{+\infty\}$ and
$g:\realm\rightarrow\mathbb{R}\cup\{+\infty\}$ be closed proper convex
functions, and $M\in\realmn$ and consider the standard Fenchel-Rockafellar
problem
\begin{equation} \label{eq:primal-problem}
    \min_{x\in\realn} f(x)+g(Mx).
\end{equation}
One way to model this problem within the parametric duality formalism of
Section~\ref{sec:parametric-duality} is to replace $n \leftarrow n+m$ and
define $F : \real^{n+m}\times\realm \to \real \cup \{+\infty\}$ by
\begin{align} \label{eq:sum-F}
    F\big( (x,z),u \big) & =
    \begin{cases}
        f(x) + g(z), & \mbox{if } Mx - z + u = 0 \\
        +\infty,     & \mbox{otherwise}.
    \end{cases}
\end{align}
Using that $f$ and $g$ are closed proper convex, it is easily verified that $F$
is closed proper convex. The primal problem~\eqref{eq:parametric-primal} may
then be written
\begin{equation} \label{eq:conjugate-primal}
    \min_{x\in \realn, z\in \realm}F\big((x,z),0\big).
\end{equation}
Specializing~\eqref{eq:def-lagrange}, the Lagrangian function $L$ on $\real^{n+m}
\times \realm$ is obtained by taking the concave conjugate of $F$ with respect
to the second argument, yielding
\begin{align}
    L\big( (x,z),p \big) & = \inf_{u \in \realm} \left\{F\big( (x,z),u \big) - \inner{u}{p}\right\} \nonumber \\
                            & = f(x) + g(z) + \inner{p}{Mx-z}.
                            \label{eq:conjugate-lagrange}
\end{align}
The dual function $D$ is then obtained by taking the concave conjugate of $L$
with respect to its first argument, yielding, for any $s_x\in\real^n$ and
$s_z,p \in \real^m$,
\begin{align}
D\big((s_x,s_z),p\big)
&= \inf_{\substack{x\in\real^n\\z\in\real^m}}
   \Big\{ f(x) + g(z) + \inner{p}{Mx-z} 
          - \big\langle{(s_x,s_z)},{(x,z)}\big\rangle \Big\} \nonumber \\
&= \inf_{x\in\real^n} \big \{ f(x) + \inner{M\transpose p - s_x}{x}\big\}
   + \inf_{z\in\real^m} \big\{ g(z) - \inner{p+s_z}{z} \big\} \nonumber \\
&= -f^*(s_x - M\transpose p) - g^*(p + s_z). \label{fenchelExtendedDual}
\end{align}
In particular, the parametric dual problem of maximizing $D(0,p)$ is thus
equivalent to maximizing $-f^*(-M\transpose p) - g^*(p)$, which is in turn
equivalent to minimizing $f^*(-M\transpose p) + g^*(p)$, which is the standard
Fenchel-Rockafellar dual of~\eqref{eq:primal-problem}.

Returning now to the expression for $L$ in~\eqref{eq:conjugate-lagrange},
observe that at any point $\big( (x,z), p \big) \in \real^{n+m} \times
\realm$, one may readily compute
\begin{equation} \label{eq:conj-lag-sub}
    \partial L\big( (x,z),p \big) = \big( \parent{\partial f(x) + M\transpose p} 
    \times \parent{\partial g(z) - p} \big) \times \big\{z - Mx \big\}.
\end{equation}

We now consider how the
recursions~\eqref{eq:alt-frame-min}-\eqref{eq:alt-frame-p} specialize in the
case of the particular class of parametric primal functions~$F$ described
in~\eqref{eq:sum-F}. Substituting the form of $L$ given
in~\eqref{eq:conj-lag-sub} into~\eqref{eq:saddle-point-l} with the sequence
$\{x^k\} \subset \real^n$ rewritten as $\big\{(x^k,z^k)\big\}
\subset \real^{n+m}$ yields together with~\eqref{eq:alt-frame-bar-p} the
relations
\begin{align*}
u^{k+1} &= z^{k+1} - Mx^{k+1} \\
\bar p^k &= p^k + c_k(Mx^{k+1} - z^{k+1}) \\
s^{k+1} &\in \parent{\partial f(x^{k+1}) + M\transpose \bar p^k} 
               \times \parent{\partial g(z^{k+1}) - \bar p^k}.
\end{align*}
Rewriting $x^{k+1} \leftarrow (x^{k+1},z^{k+1})$ and substituting $u^{k+1} =
z^{k+1} - Mx^{k+1}$ throughout, the recursion
ensemble~\eqref{eq:saddle-point-l},
\eqref{eq:alt-frame-approx}-\eqref{eq:alt-frame-p} may then be expressed for the
particular setup of~$F$ in~\eqref{eq:sum-F} and therefore~$L$
in~\eqref{eq:conjugate-lagrange} as
\begin{align}
\bar p^k &= p^k + c_k(Mx^{k+1} - z^{k+1}) \label{eq:sum-frame-bar-p} \\
s^{k+1} &\in \parent{\partial f(x^{k+1}) + M\transpose \bar p^k} 
               \times \parent{\partial g(z^{k+1}) - \bar p^k} \label{eq:sum-frame-s} \\
    \frac{2\rho_k}{c_k}
    \Big\lvert\big\langle(x^{k+1},z^{k+1}) - w^k, s^{k+1}\big\rangle\Big\rvert  + \rho_k^2 \norm{s^{k+1}}^2
    & \leq \parent{2\rho_k - \rho_k^2 - \epsilon} \norm{Mx^{k+1}-z^{k+1}}^2 
        \label{eq:sum-frame-approx}                        \\
    2\rho_k - \rho_k^2 - \epsilon &\geq 0 \label{eq:sum-frame-eps} \\
    w^{k+1} & = w^k - \rho_k c_k s^{k+1} \label{eq:sum-frame-w}                                 \\
    p^{k+1} & = (1-\rho_k) p^k + \rho_k \bar p^k. \label{eq:sum-frame-p}
\end{align}
In this rewriting, each $s^k$ and $w^k$ is a member of $\real^{n+m}$.  To
distinguish between the two block components of these vectors when necessary,
we will write $s^k = (s_x^k,s_z^k)$ and $w^k = (w_x^k,w_z^k)$, where
$s_x^k,w_x^k \in \real^n$ and $s_z^k,w_z^k \in \real^m$, for all $k \geq 1$.
In this case, \eqref{eq:sum-frame-s} may be expressed as the inclusions
\begin{align} \label{eq:sum-frame-s2}
s_x^{k+1} &\in \partial f(x^{k+1}) + M\transpose \bar p^k &
s_z^{k+1} &\in \partial g(z^{k+1}) - \bar p^k.
\end{align}
Substituting~\eqref{eq:sum-frame-bar-p} into these inclusions yields
\begin{align}
s_x^{k+1} &\in \partial f(x^{k+1}) + M\transpose p^k + c_k M\transpose(Mx^{k+1} - z^{k+1})
    \label{eq:sum-frame-subgrad-x} \\
s_z^{k+1} &\in \partial g(z^{k+1}) - p^k + c_k(z^{k+1} - Mx^{k+1}),
    \label{eq:sum-frame-subgrad-z}
\end{align}
or equivalently
\begin{equation} \label{eq:approx-min-sum-auglag}
(s_x^k,s_z^k) \in
\partial \! \left[
f(x) + g(z) + \inner{p^k}{Mx-z} + \frac{c_k}{2}\norm{Mx - z}^2
\right]_{\substack{x = x^{k+1} \\ z = z^{k+1}}}.
\end{equation}
Therefore, the conditions
\eqref{eq:sum-frame-bar-p}-\eqref{eq:sum-frame-s} or
\eqref{eq:sum-frame-s2} may be viewed as an approximate minimization
of the augmented Lagrangian $f(x) + g(z) + \inner{p^k}{Mx-z} +
({c_k}/{2})\norm{Mx - z}^2$ in~\eqref{almfenchelmin}.  In particular, having
$s_x^k=0$ and $s_z^k=0$ corresponds to exactly minimizing the augmented
Lagrangian.

\subsection{Adaptive relaxation}
\label{sec:ways-to-use}
Returning for the time being to the general case of $F$, we now consider how
to employ the approximation criterion~\eqref{eq:alt-frame-approx}, given fixed
value of $\epsilon> 0$.  One possibility is to treat the under/over-relaxation
parameter $\rho_k$ as given and use~\eqref{eq:alt-frame-approx} as a stopping
criterion for approximately solving the subproblem~\eqref{generalAugLagMin}.
In this approach, one chooses $\rho_k$ at the beginning of iteration $k$ and
uses some iterative method to solve~\eqref{generalAugLagMin}, periodically
checking whether~\eqref{eq:alt-frame-approx} holds.  If so, the subproblem has
been solved to sufficient accuracy for the selected $\rho_k$, and one can
proceed to the updates~\eqref{eq:alt-frame-w}-\eqref{eq:alt-frame-p} and the
subsequent iteration.  If not, one needs to return to subproblem and solve it
more accurately, which should eventually lead to a smaller $s^{k+1}$ and thus
smaller values $\abs{\inner{x^{k+1} - w^k}{s^{k+1}}}$ and
$\smallnorm{s^{k+1}}^2$.  This approach is a direct generalization of that
taken in~\cite{eckstein2013practical}, allowing for varying $\rho_k$ instead
of $\rho_k \equiv 1$.

Another option, however, is to dynamically adjust $\rho_k$.  In this approach,
one still uses an iterative method to solve~\eqref{generalAugLagMin},
periodically checking whether it can be terminated.  But the termination check
is more elaborate, attempting to find a value of $\rho_k$ for
which~\eqref{eq:sum-frame-approx} and~\eqref{eq:alt-frame-eps} are satisfied.
Suppose that the iterative solution procedure for~\eqref{generalAugLagMin} has
been paused with particular candidate values of $x^{k+1}$, $s^{k+1}$, and
$u^{k+1}$ satisfying~\eqref{eq:alt-frame-min}. One may then rearrange the
error criterion~\eqref{eq:alt-frame-approx} into a quadratic inequality in
$\rho_k$, namely
\begin{equation} \label{eq:rhok-ineq}
    \parent{\norm{u^{k+1}}^2 + \norm{s^{k+1}}^2} \rho_k^2 - 2\parent{\norm{u^{k+1}}^2 - \frac{1}{c_k}\abs{\inner{x^{k+1} - w^k}{s^{k+1}}}} \rho_k + \epsilon \norm{u^{k+1}}^2 \leq 0.
\end{equation}
This inequality has a solution if its discriminant is nonnegative, that is, if
\begin{equation}\label{eq:discriminant}
    4 \parent{\norm{u^{k+1}}^2 - \frac{1}{c_k}\abs{\inner{x^{k+1} - w^k}{s^{k+1}}}}^2 - 4 \epsilon \parent{\norm{u^{k+1}}^4 + \norm{u^{k+1}}^2\norm{s^{k+1}}^2} \geq 0.
\end{equation}
Dividing by the common factor of $4$, an equivalent condition is
\begin{equation} \label{eq:defDeltak}
    \Delta_k \doteq \parent{\norm{u^{k+1}}^2 - \frac{1}{c_k}\abs{\inner{x^{k+1} - w^k}{s^{k+1}}}}^2 - \epsilon \parent{\norm{u^{k+1}}^4 + \norm{u^{k+1}}^2\norm{s^{k+1}}^2} \geq 0,
\end{equation}
with the discriminant defined in~\eqref{eq:discriminant} being $4\Delta_k$.
If $\Delta_k \geq 0$, then solutions to~\eqref{eq:rhok-ineq} must exist and, once again applying the quadratic formula, must lie between the two values
\begin{align*}
\frac{2\left(\norm{u^{k+1}}^2 - \frac{1}{c_k}\abs{\inner{x^{k+1} - w^k}{s^{k+1}}}\right)
      \pm \sqrt{4\Delta_k}}{2\left(\norm{u^{k+1}}^2 + \norm{s^{k+1}}^2\right)}
      =
\frac{\norm{u^{k+1}}^2 - \frac{1}{c_k}\abs{\inner{x^{k+1} - w^k}{s^{k+1}}}\
      \pm \sqrt{\Delta_k}}{\norm{u^{k+1}}^2 + \norm{s^{k+1}}^2}, 
\end{align*}
or equivalently,
\begin{equation}\label{eq:rhok-range-ineq}
    \frac{\norm{u^{k+1}}^2 - \tfrac{1}{c_k}\abs{\inner{x^{k+1} - w^k}{s^{k+1}}} - \sqrt{\Delta_k}}{\norm{u^{k+1}}^2 + \norm{s^{k+1}}^2} \leq \rho_k \leq \frac{\norm{u^{k+1}}^2 - \tfrac{1}{c_k}\abs{\inner{x^{k+1} - w^k}{s^{k+1}}} + \sqrt{\Delta_k}}{\norm{u^{k+1}}^2 + \norm{s^{k+1}}^2}.
\end{equation}
If there exist any values of $\rho_k$ in this range that also
meet~\eqref{eq:alt-frame-eps}, one may select any such $\rho_k$ and proceed to
the updates~\eqref{eq:alt-frame-w}-\eqref{eq:alt-frame-p} and the next
iteration.  Otherwise, the subproblem must be solved to higher accuracy. We
will establish below that sufficiently accurate subproblem solutions will lead
to $\Delta_k \geq 0$ and also allow~\eqref{eq:alt-frame-eps} to be satisfied,
except in some cases in which the solving the current subproblem leads to
direct solution of the original primal problem~\eqref{eq:parametric-primal}.
For the time being, however, we will simply assume that it will be possible to
satisfy $\Delta_k \geq 0$, \eqref{eq:rhok-range-ineq},
and~\eqref{eq:alt-frame-eps}.

Such a procedure dynamically adapts the relaxation factors
$\rho_k$ to the accuracy of the subproblem solutions.  An approximate
subproblem solution that might not be acceptable with $\rho_k = 1$, for
example, might be acceptable for a smaller value of $\rho_k$, effectively
under-relaxing the updates applied to $w^k$ and $p^k$.

The quantities $\smallnorm{u^{k+1}}^2$, $\smallnorm{s^{k+1}}^2$, and
$(1/c_k)\abs{\inner{x^{k+1}-w^k}{s^{k+1}}}$ appear repeatedly
in~\eqref{eq:defDeltak} and~\eqref{eq:rhok-range-ineq}. To simplify these
conditions, one may define for all $k \geq 0$ the scalars
\begin{align}
U_k &\doteq \norm{u^{k+1}}^2 &
A_k &\doteq \frac{1}{c_k}\abs{\inner{x^{k+1}-w^k}{s^{k+1}}} &
S_k &\doteq \norm{s^{k+1}}^2,
\end{align}
after which~\eqref{eq:defDeltak} may be expressed as
\begin{equation}
\Delta_k = (U_k - A_k)^2 - \epsilon U_k(U_k + S_k),
\end{equation}
while~\eqref{eq:rhok-range-ineq} becomes
\begin{equation}
\frac{U_k - A_k - \sqrt{\Delta_k}}{U_k + S_k}
\leq \rho_k \leq
\frac{U_k - A_k + \sqrt{\Delta_k}}{U_k + S_k}.
\end{equation}
Equivalently to this last condition, one may select an arbitrary $\gamma_k \in
[-1,1]$ and then write
\begin{align} \label{eq:condensed-rhok-formula}
-1 &\leq \gamma_k \leq 1 &
\rho_k &= \frac{U_k - A_k + \gamma_k\sqrt{\Delta_k}}{U_k + S_k}.
\end{align}
If $u^{k+1} = 0$ and $s^{k+1} = 0$, meaning that $U_k = S_k = 0$,
neither~\eqref{eq:rhok-range-ineq} nor~\eqref{eq:condensed-rhok-formula} is
well defined.  However, in this case~\eqref{eq:alt-frame-min} takes the form
\begin{align*}
(0,0) &\in \partial F(x^{k+1},0) + (0,0 - p^k)
&& \Leftrightarrow & (0,p^k) \in \partial F(x^{k+1},0) &
&& \Leftrightarrow & (0,0) \in \partial L(x^{k+1},p^k),
\end{align*}
the last equivalence arising from~\eqref{eq:subgradient-relations}.  In this
case, $x^{k+1}$ solves the primal problem and $p^k$ solves the dual, so the
algorithm may be halted.  Note also in this case that $\Delta_k = (0 - A_k)^2
- \epsilon \cdot 0 \cdot (0 + 0) = A_k^2 \geq 0$, meaning that $\Delta_k \geq
  0$ must hold.

We now formally restate the
recursions~\eqref{eq:alt-frame-min}-\eqref{eq:alt-frame-p} using the auxiliary
sequences $\{U_k\}$, $\{S_k\}$, and $\{A_k\}$.  Note that the conditions $U_k
> 0$ in~\eqref{eq:alt-frame-u-again}, $\Delta_k \geq 0$
in~\eqref{eq:alt-frame-delta-again}, and $\rho_k > 0$
in~\eqref{eq:alt-frame-rho-again} should be regarded as assumptions for the time
being; we will justify these assumptions below under suitable conditions on
how the method is implemented in practice.

\begin{proposition} \label{prop:convergeUSA}
Suppose that $F:\real^n\times \real^m \to \real \cup \{+\infty\}$ is closed proper convex, $\epsilon \in (0,1)$, and the sequences
$\{x^k\}^\infty_{k=1}, \{w^k\}^\infty_{k=0}, \{s^k\}^\infty_{k=1}\subset
\real^{n}$, $\{p^k\}^\infty_{k=0}, \{\bar p^k\}^\infty_{k=1},
\{u^k\}^\infty_{k=1}\subset \realm$, $\{\rho_k\}^\infty_{k=0}$, and
$\{U_k\}^\infty_{k=0}, \{S_k\}^\infty_{k=0}, \{A_k\}^\infty_{k=0},
\{\gamma_k\}, \{c_k\}^\infty_{k=0}\subset \real$, with $\inf_k\{c_k\} > 0$,
obey the following recursions for all $k
\geq 0$:
\begin{align}
    (s^{k+1}, 0)
    & \in \partial F(x^{k+1}, u^{k+1}) + (0,c_k u^{k+1} - p^k) 
    \label{eq:alt-frame-min-again} \\
    U_k &= \norm{u^{k+1}}^2 > 0 \label{eq:alt-frame-u-again} \\ 
    S_k &= \norm{s^{k+1}}^2 \label{eq:alt-frame-s-again} \\ 
    A_k &= \frac{1}{c_k}{\abs{\inner{x^{k+1}-w^k}{s^{k+1}}}}\label{eq:alt-frame-a-again} \\ 
    \Delta_k &= (U_k - A_k)^2 - \epsilon U_k(U_k + S_k) \geq 0 \label{eq:alt-frame-delta-again} \\ 
    \gamma_k &\in [-1,1] \label{eq:alt-frame-gamma-again} \\ 
    \rho_k &= \frac{U_k - A_k + \gamma_k\sqrt{\Delta_k}}{U_k + S_k} > 0 \label{eq:alt-frame-rho-again} \\ 
    w^{k+1} & = w^k - \rho_k c_k s^{k+1} \label{eq:alt-frame-w-again} \\
    \bar{p}^k & = p^k - c_k u^{k+1} \label{eq:alt-frame-bar-p-again} \\
    p^{k+1} & = (1-\rho_k) p^k + \rho_k \bar p^k \label{eq:alt-frame-p-again}
\end{align}
Then the sequences $\{x^k\}$, $\{w^k\}$, $\{s^k\}$, $\{p^k\}$, $\{u^k\}$,
$\{\rho_k\}$, and $\{c_k\}$ conform to the
recursions~\eqref{eq:alt-frame-min}-\eqref{eq:alt-frame-p}.  In particular, if
a saddle point of $L$ as defined in~\eqref{eq:def-lagrange} exists, then all
the conclusions of Proposition~\ref{prop:convergence} hold, and otherwise at
least one of $\{x^k\}$ and $\{\bar p^k\}$ is unbounded.
\end{proposition}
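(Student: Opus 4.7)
The plan is to verify that the recursions \eqref{eq:alt-frame-min-again}--\eqref{eq:alt-frame-p-again} imply the recursions \eqref{eq:alt-frame-min}--\eqref{eq:alt-frame-p} of Section~\ref{sec:abstract-method}, after which Propositions~\ref{prop:convergence} and~\ref{prop:unbounded} yield the stated conclusions immediately. Four of the defining relations match verbatim: \eqref{eq:alt-frame-min-again}, \eqref{eq:alt-frame-w-again}, \eqref{eq:alt-frame-bar-p-again}, and \eqref{eq:alt-frame-p-again} reproduce \eqref{eq:alt-frame-min}, \eqref{eq:alt-frame-w}, \eqref{eq:alt-frame-bar-p}, and \eqref{eq:alt-frame-p}. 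The task therefore reduces to deriving the approximation condition \eqref{eq:alt-frame-approx} and the discriminant-style condition \eqref{eq:alt-frame-eps} from the definitions of $U_k$, $S_k$, $A_k$, $\Delta_k$ and the formula \eqref{eq:alt-frame-rho-again} for $\rho_k$.

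The main algebraic step is to substitute the definitions of $U_k$, $S_k$, and $A_k$ into \eqref{eq:alt-frame-approx} and rearrange it into the quadratic inequality $(U_k+S_k)\rho_k^2 - 2(U_k-A_k)\rho_k + \epsilon U_k \leq 0$, exactly as in the derivation of \eqref{eq:rhok-ineq}. Since $U_k > 0$ by \eqref{eq:alt-frame-u-again}, the leading coefficient $U_k+S_k$ is strictly positive, so the parabola opens upward and its discriminant is precisely $4\Delta_k$; assumption \eqref{eq:alt-frame-delta-again} ensures non-negativity, so the roots $\bigl((U_k-A_k)\pm\sqrt{\Delta_k}\bigr)/(U_k+S_k)$ are real. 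Because $\gamma_k \in [-1,1]$, the formula \eqref{eq:alt-frame-rho-again} places $\rho_k$ as a convex combination of these two roots, hence between them, and the quadratic inequality is satisfied. This recovers \eqref{eq:alt-frame-approx}.

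Condition \eqref{eq:alt-frame-eps} follows for free. Rewriting the established \eqref{eq:alt-frame-approx} back in the equivalent form $2\rho_k A_k + \rho_k^2 S_k \leq (2\rho_k - \rho_k^2 - \epsilon)U_k$, the left-hand side is non-negative, forcing the right-hand side to be non-negative; dividing by $U_k > 0$ yields $2\rho_k - \rho_k^2 - \epsilon \geq 0$. Together with the assumed positivity $\rho_k > 0$ from \eqref{eq:alt-frame-rho-again}, every hypothesis of the abstract framework is in place, and invoking Proposition~\ref{prop:convergence} when a saddle point of $L$ exists, or Proposition~\ref{prop:unbounded} when it does not, concludes the argument. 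The main obstacle in this proof is really only bookkeeping --- tracking the many auxiliary sequences cleanly and confirming the quadratic formula recovers the correct admissible range for $\rho_k$ --- since the expression \eqref{eq:alt-frame-rho-again} was reverse-engineered in Section~\ref{sec:ways-to-use} precisely so as to meet \eqref{eq:alt-frame-approx}. No genuinely new estimate or monotonicity argument is required beyond what has already been established.
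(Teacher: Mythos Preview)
Your proposal is correct and follows essentially the same approach as the paper's own proof: you identify that four of the recursions match verbatim, then verify \eqref{eq:alt-frame-approx} by substituting the definitions of $U_k$, $S_k$, $A_k$ to obtain the quadratic inequality in $\rho_k$ whose discriminant is $4\Delta_k\geq 0$ and whose roots bracket the value given by \eqref{eq:alt-frame-rho-again}, and finally deduce \eqref{eq:alt-frame-eps} from the nonnegativity of the left side of the rearranged inequality together with $U_k>0$. The only minor point worth making explicit is that the nonnegativity of $2\rho_k A_k + \rho_k^2 S_k$ relies on $\rho_k>0$, which you invoke from \eqref{eq:alt-frame-rho-again}; the paper makes this dependence explicit at that step.
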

\begin{proof}
The inclusion~\eqref{eq:alt-frame-min-again} is identical
to~\eqref{eq:alt-frame-min},
while~\eqref{eq:alt-frame-w-again}-\eqref{eq:alt-frame-p-again} are identical
to~\eqref{eq:alt-frame-w}-\eqref{eq:alt-frame-p}, so to prove the claim
about~\eqref{eq:alt-frame-min-again}-\eqref{eq:alt-frame-p-again} producing
sequences conforming to~\eqref{eq:alt-frame-min}-\eqref{eq:alt-frame-p}, it
will suffice to show, for any $k \geq 0$,
that~\eqref{eq:alt-frame-u-again}-\eqref{eq:alt-frame-rho-again}
imply~\eqref{eq:alt-frame-approx} and~\eqref{eq:alt-frame-eps}.  To this end,
fix any $k\geq 0$, and observe that by substituting the definitions of $U_k$,
$S_k$ and $A_k$ into~\eqref{eq:alt-frame-approx} one obtains the equivalent
inequality
\begin{equation} \label{approxUSA}
2\rho_k A_k + \rho_k^2 S_k \leq (2\rho_k - \rho_k^2 - \epsilon) U_k.
\end{equation}
Algebraically rearranging, this inequality is in turn equivalent to
\begin{equation} \label{approxUSA2}
(S_k + U_k)\rho_k^2 + 2(A_k - U_k) \rho_k + \epsilon U_k \leq 0.
\end{equation}
The left side of this inequality is a quadratic function of $\rho_k$ whose
discriminant is
\begin{equation*}
\big(2(A_k-U_k)\big)^2 - 4(U_k + S_k)(\epsilon U_k) 
= 4(A_k-U_k)^2 - 4\epsilon U_k (U_k + S_k) = 4 \Delta_k.
\end{equation*}
Since $\Delta_k \geq 0$ by assumption, there exist values of $\rho_k$
satisfying~\eqref{approxUSA2} has solutions, which the quadratic formula
dictates consist  of all numbers between the two values
\begin{equation*}
\frac{-2(A_k-U_k) \pm \sqrt{4\Delta_k}}{2(U_k + S_k)}
= \frac{U_k-A_k \pm \sqrt{\Delta_k}}{U_k + S_k}.
\end{equation*}
This is precisely the range of values described
by~\eqref{eq:alt-frame-gamma-again} and the equation
in~\eqref{eq:alt-frame-rho-again}, hence~\eqref{approxUSA2} holds, which is in
turn equivalent to~\eqref{approxUSA} and~\eqref{eq:alt-frame-approx}.
So~\eqref{eq:alt-frame-approx} holds and it remains to show
that~\eqref{eq:alt-frame-eps} is also true.  To this end,
consider~\eqref{approxUSA}: since $\rho_k > 0$ by assumption
in~\eqref{eq:alt-frame-rho-again} and both $A_k$ and $S_k$ are necessarily
nonnegative, the left side of the inequality 
must be nonnegative.  On the right, $U_k > 0$ by
assumption in~\eqref{eq:alt-frame-u-again}, so necessarily $2\rho_k - \rho_k^2
- \epsilon \geq 0$ since otherwise the right side would be negative and a
  contradiction would immediately ensue. Thus,
  \eqref{eq:alt-frame-eps} holds, completing the demonstration that the
  sequences conform to~\eqref{eq:alt-frame-min}-\eqref{eq:alt-frame-p}.

In the case the $L$ has at least one saddle point, all of the hypotheses of
Proposition~\ref{prop:convergence} therefore hold, and so all its conclusions
hold.  Similarly, if $L$ has no saddle points,
Proposition~\ref{prop:unbounded} may be invoked to assert that at least one of
$\{x^k\}$ and $\{\bar p^k\}$ is unbounded.
\end{proof}

\subsection{A complete abstract algorithmic framework}
The assumptions $U_k > 0$, $\Delta_k \geq 0$, and $\rho_k > 0$ respectively
in~\eqref{eq:alt-frame-min-again}, \eqref{eq:alt-frame-delta-again},
and~\eqref{eq:alt-frame-rho-again} made it possible to demonstrate that the
recursion
framework~\eqref{eq:alt-frame-min-again}-\eqref{eq:alt-frame-p-again}
describes sequences conforming to the originally proposed recursion
framework~\eqref{eq:alt-frame-min}-\eqref{eq:alt-frame-p}.  This section will
next establish that, if one is equipped with a plausible iterative procedure
for the approximate minimization~\eqref{eq:alt-frame-min-again} --- or
equivalently~\eqref{eq:alt-frame-min} or~\eqref{eq:saddle-point-l} --- then it
is possible to either guarantee that these assumptions hold or that the
process of solving~\eqref{eq:alt-frame-min-again} will provide a direct
solution to the original primal and dual problems.  This task will be
accomplished by proposing a complete version of a full algorithmic framework
using the results in the preceding sections.  Furthermore, so that the
analysis remains general, we will first posit an abstract model for whatever
iterative procedure might be used to solve the approximate minimization
problem~\eqref{eq:alt-frame-min-again}:

\begin{definition} \label{def:subSolver}
Given a closed proper convex function $F:\real^n \times \real^m \to \real \cup
\{+\infty\}$, a vector $p^k \in
\real^m$, and a scalar $c_k > 0$; a \emph{compatible subproblem process} for 
$(F,p^k,c_k)$ is a sequence
$\big\{(x^{k,j},u^{k,j},s^{k,j})\big\}_{j=0}^\infty$ such that
\begin{align}
\lim_{j\to\infty} s^{k,j} &= 0 \label{CSPconverge} \\
\lim_{j\to\infty} \inner{x^{k,j}}{s^{k,j}} &= 0 \label{CSPinner} \\
(\forall\,j \geq 0) \qquad \qquad
(s^{k,j}, 0) & \in \partial F(x^{k,j}, u^{k,j}) + (0,c_k u^{k,j} - p^k).
    \label{CSPsubgrad}
\end{align}
\end{definition}

Here, we use $j$ as an iteration counter for the ``inner loop'' subproblem of minimizing the augmented Lagrangian minimand $F_k(x,u) \doteq F(x,u) - \inner{p^k}{u}
+ (c_k/2)\smallnorm{u}^2$ in~\eqref{generalAugLagMin}. The index $k$ is
  essentially superfluous to the definition, but is included to make it easier
  to match the definition's notation to our later analysis.
  The definition abstracts an iterative process that attempts
  to converge to a minimizer of $F_k$ by producing a sequence of iterates
  $\big\{(x^{k,j},u^{k,j})\big\}_{j=0}^\infty$ and corresponding subgradients
  $\big\{(s^{k,j},0)\big\}_{j=0}^\infty$.  The condition~\eqref{CSPsubgrad}
  specifies that $(s^{k,j}, 0) \in \partial F_k(x^{k,j}, u^{k,j})$ for all
  $j$. The subgradient with respect to the $u$ variables is always zero,
  meaning that one always minimizes exactly with respect to $u$,
  while~\eqref{CSPconverge} specifies that the $x$ subgradient components are
  driven to zero as $j \to \infty$, reflecting convergence to the optimality
  conditions $(0,0) \in \partial F_k(x,u) = \partial\big[F(x,u) -
  \inner{p^k}{u} + (c_k/2)\smallnorm{u}^2\big]$.  The reasons for the inner
  product limit condition~\eqref{CSPinner} is more technical and may be seen in
  the proof of Lemma~\ref{lem:genericInnerLoop} in 
  Appendix~\ref{app:inner-loop-proof}, but it is
  readily guaranteed, in view of~\eqref{CSPconverge}, since $\lim_{j\to\infty}
  s^{k,j} \to 0$ by~\eqref{CSPconverge}, by $\{x^{k,j}\}_{j=0}^\infty$ being
  convergent or bounded.

We now state a somewhat less abstract
generic algorithm able to
implement~\eqref{eq:alt-frame-min-again}-\eqref{eq:alt-frame-p-again}:

~

\begin{algorithm}[ht]{}
    \caption{Generic Adaptive Relaxation Algorithm} \label{alg:gen-relax}
    \begin{algorithmic}[1]
        \State \textbf{Given:} $p^0 \in \real^m$, $w^{0}=0 \in \real^m$, 
            $c_{\min} > 0$, $\epsilon \in (0,1)$
        \For {$k=0,1,2, \cdots$} \algorithmiccomment{Outer loop}
        \State Choose $c_k>c_{\min}$
        \State \textbf{Given:} $\big\{(x^{k,j},u^{k,j},s^{k,j})\big\}_{j=0}^\infty$ 
                is a compatible subproblem process for $(F,p^k,c_k)$
        \Repeat{~for $j=1, 2, \ldots$} \algorithmiccomment{Inner loop}
           \State $U_{k,j} = \norm{u^{k,j}}^2$ \label{step:calcUkj}
           \State $S_{k,j} = \norm{s^{k,j}}^2$
           \State $A_{k,j} = \frac{1}{c_k}\abs{\inner{x^{k,j}-w^k}{s^{k,j}}}$
                  \label{step:calcAkj}
           \State $\Delta_{k,j}= (U_{k,j} - A_{k,j})^2 
                                        - \epsilon U_{k,j} (U_{k,j} + S_{k,j})$
                  \label{step:calcDeltakj}
        \Until{$A_{k,j} < U_{k,j}$ and $\Delta_{k,j} \geq 0$}
              \label{step:genericInnerStop}
        \State $x^{k+1} = x^{k,j}$ \quad $u^{k+1} = u^{k,j}$ \quad 
               $s^{k+1} = s^{k,j}$ \label{step:jfreeze1}
        \State $U_k = U_{k,j}$ \quad $S_k = S_{k,j}$ \quad $A_k = A_{k,j}$ 
                               \quad $\Delta_k = \Delta_{k,j}$ 
                               \label{step:jfreeze2}
        \State $\gamma_k^{\min} = \max\left\{-1, \frac{A_k - U_k}{\sqrt{\Delta_k}}
                                           \right\}$ \label{step:calcgammakmin}
        \State Choose any $\gamma_k \in (\gamma_k^{\min},1]$
               \label{step:choosegammak}
        \State $\rho_k = \frac{U_k - A_k + \gamma_k\sqrt{\Delta_k}}{U_k + S_k}$
               \label{step:calcrhok}
        \State $w^{k+1} = w^k - \rho_k c_k s^{k+1}$ \label{step:wupdate}
        \State $\bar{p}^k = p^k - c_k u^{k+1}$ 
        \State $p^{k+1} = (1-\rho_k) p^k + \rho_k \bar p^k$ \label{step:pupdate}
    \EndFor
    \end{algorithmic}
\end{algorithm}

~

With the help of our earlier results, we now give a convergence analysis of
this algorithm.  Technical results regarding becoming indefinitely ``stuck''
in the inner loop in steps~\ref{step:calcUkj}-\ref{step:genericInnerStop}, a
situation we never observed in our computational experiments in
Section~\ref{sec:numerical-experiments}, are deferred to appendices.

\begin{lemma} \label{lem:genericInnerLoop}
Suppose for some $k\geq 0$ that the inner loop of
Algorithm~\ref{alg:gen-relax} runs indefinitely, never satisfying the
stopping condition in Step~\ref{step:genericInnerStop}.  Then
\begin{align*}
  \lim_{j\to\infty} U_{k,j} &= 0 &
  \lim_{j\to\infty} S_{k,j} &= 0 &
  \lim_{j\to\infty} A_{k,j} &= 0.
\end{align*}
\end{lemma}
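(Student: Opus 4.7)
The plan is to extract the three limits from the defining properties of a compatible subproblem process (Definition~\ref{def:subSolver}). Two of them, $S_{k,j} \to 0$ and $A_{k,j} \to 0$, follow routinely from $s^{k,j} \to 0$ and $\inner{x^{k,j}}{s^{k,j}} \to 0$; the third, $U_{k,j} \to 0$, is the main obstacle and requires a contradiction argument exploiting the non-termination of the inner loop.

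First I would observe that $S_{k,j} = \smallnorm{s^{k,j}}^2 \to 0$ is immediate from~\eqref{CSPconverge}. For $A_{k,j}$, I would split
\begin{equation*}
\inner{x^{k,j} - w^k}{s^{k,j}} = \inner{x^{k,j}}{s^{k,j}} - \inner{w^k}{s^{k,j}},
\end{equation*}
so that the first term vanishes by~\eqref{CSPinner} and the second does too by~\eqref{CSPconverge}, since $w^k$ is fixed throughout the inner loop. Dividing by the positive constant $c_k$ yields $A_{k,j} \to 0$.

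The main obstacle, and the only part of the proof requiring a genuine argument, is showing $U_{k,j} \to 0$. I would argue by contradiction: suppose there exist $\delta > 0$ and indices $j_\ell \to \infty$ with $U_{k,j_\ell} \geq \delta$ for all $\ell$. Since $A_{k,j_\ell} \to 0$, the first part of the stopping test, $A_{k,j_\ell} < U_{k,j_\ell}$, holds for all sufficiently large $\ell$; hence non-termination of the inner loop forces $\Delta_{k,j_\ell} < 0$ for all such $\ell$. Using $U_{k,j_\ell} \geq \delta > 0$ to divide the definition of $\Delta_{k,j_\ell}$ in Step~\ref{step:calcDeltakj} by $U_{k,j_\ell}^2$ gives
\begin{equation*}
\frac{\Delta_{k,j_\ell}}{U_{k,j_\ell}^2}
= \parent{1 - \frac{A_{k,j_\ell}}{U_{k,j_\ell}}}^2 - \epsilon\parent{1 + \frac{S_{k,j_\ell}}{U_{k,j_\ell}}}.
\end{equation*}
Because $A_{k,j_\ell}/U_{k,j_\ell} \leq A_{k,j_\ell}/\delta$ and $S_{k,j_\ell}/U_{k,j_\ell} \leq S_{k,j_\ell}/\delta$, both ratios tend to $0$, so the right-hand side tends to $1-\epsilon > 0$. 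Thus $\Delta_{k,j_\ell} > 0$ for all large $\ell$, contradicting $\Delta_{k,j_\ell} < 0$. This forces $U_{k,j} \to 0$, completing the argument.

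I would close by noting that the subgradient inclusion~\eqref{CSPsubgrad} from the compatible subproblem process is not used here; it becomes relevant only in the subsequent analysis that interprets a stalled inner loop as having produced a primal-dual solution.
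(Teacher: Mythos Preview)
Your proof is correct and rests on the same core idea as the paper's: $S_{k,j}\to 0$ and $A_{k,j}\to 0$ follow directly from the compatible subproblem process properties~\eqref{CSPconverge}--\eqref{CSPinner}, while $U_{k,j}\to 0$ is obtained by contradiction, showing that a subsequence with $U_{k,j}\geq\delta$ would eventually yield $\Delta_{k,j}>0$ and hence trigger termination.

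Your organization is somewhat cleaner than the paper's. The paper partitions $\nat$ into $\mathcal{J}_1=\{j:A_{k,j}\geq U_{k,j}\}$ and $\mathcal{J}_2=\{j:\Delta_{k,j}<0\}$, shows $U_{k,j}\to 0$ along each set separately (the $\mathcal{J}_2$ case via a contradiction argument using a direct lower bound $\Delta_{k,j}\geq U_{k,j}\big((1-\epsilon)U_{k,j}-2A_{k,j}-\epsilon S_{k,j}\big)$), and then patches the two together with a final $\limsup$ argument. You instead work directly on the offending subsequence $\{j_\ell\}$, observe that it must eventually fall into the $\Delta<0$ case (since $A_{k,j_\ell}<\delta\leq U_{k,j_\ell}$ for large $\ell$), and derive the contradiction in one stroke via the normalization $\Delta_{k,j_\ell}/U_{k,j_\ell}^2\to 1-\epsilon>0$. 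This avoids the case split and the patching step, at no loss of rigor.
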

\begin{proof}
See~\ref{app:inner-loop-proof}.
\end{proof}

\begin{proposition} \label{prop:fullAbstract}
Algorithm~\ref{alg:gen-relax} must produce one of the following two results:
\begin{enumerate}[label=(\roman*), nosep]
\item \label{item:notstuck} Every instance of the inner loop terminates finitely, 
so the outer 
loop runs indefinitely.  In this case, all the conclusions of
Proposition~\ref{prop:convergence} hold if a saddle point of $L$ exists,
and otherwise at least one of the sequences  $\{x^k\}$ or $\{\bar p^k\}$
produced by the outer loop must be unbounded.
\item \label{item:stuckInner} For some $k$, the inner loop runs indefinitely.
In this case, $p^k$ is an optimal solution to the dual problem and the
sequence $\{x^{k,j}\}_{j=0}^\infty$ is asymptotically primal optimal in the
sense that
\begin{align*}
    \limsup_{j\to\infty} F(x^{k,j}, u^{k,j}) &\leq \inf_{x\in\real^n} F(x,0) &
    \lim_{j\to\infty} u^{k,j} &= 0.
\end{align*}
\end{enumerate}
\end{proposition}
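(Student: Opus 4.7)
The proof will split into the two cases in the obvious way. For case~\ref{item:notstuck}, the plan is to verify that when every inner loop terminates finitely, the sequences produced by Algorithm~\ref{alg:gen-relax} satisfy all the hypotheses of Proposition~\ref{prop:convergeUSA}, and then simply invoke that proposition. Most conditions in \eqref{eq:alt-frame-min-again}--\eqref{eq:alt-frame-p-again} hold by direct inspection: \eqref{eq:alt-frame-min-again} is exactly the subgradient condition guaranteed by the compatible subproblem process, \eqref{eq:alt-frame-s-again}--\eqref{eq:alt-frame-a-again} are definitional, \eqref{eq:alt-frame-gamma-again} is by choice in step~\ref{step:choosegammak}, and \eqref{eq:alt-frame-w-again}--\eqref{eq:alt-frame-p-again} are exactly steps~\ref{step:wupdate}--\ref{step:pupdate}. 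The only points requiring actual argument are \eqref{eq:alt-frame-u-again} and the positivity in \eqref{eq:alt-frame-rho-again}: the inner-loop termination criterion delivers $A_k < U_k$, which gives $U_k > 0$ (since $A_k \geq 0$) and also $U_k - A_k > 0$; combined with the choice $\gamma_k > \gamma_k^{\min}$ this forces the numerator $U_k - A_k + \gamma_k \sqrt{\Delta_k}$ to be positive (handling $\Delta_k = 0$ and $\Delta_k > 0$ separately, since in the former case $\gamma_k^{\min} = -1$ suffices and the numerator collapses to $U_k - A_k > 0$). Once the hypotheses of Proposition~\ref{prop:convergeUSA} are verified, all conclusions of Proposition~\ref{prop:convergence} follow when a saddle point exists, and Proposition~\ref{prop:unbounded} supplies the unboundedness conclusion otherwise.

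For case~\ref{item:stuckInner}, the plan is to leverage Lemma~\ref{lem:genericInnerLoop}, which yields $U_{k,j}, S_{k,j}, A_{k,j} \to 0$, so in particular $u^{k,j} \to 0$ and $s^{k,j} \to 0$. The subgradient condition \eqref{CSPsubgrad} is equivalent (after rearranging and applying \eqref{eq:subgradient-relations}) to
\begin{equation*}
(x^{k,j}, u^{k,j}) \in \partial D\bigl(s^{k,j},\, p^k - c_k u^{k,j}\bigr),
\end{equation*}
exactly mirroring the derivation of \eqref{eq:saddle-point-l} and the dual step in the proof of Proposition~\ref{prop:convergence}. Taking any dual-feasible $p^* \in \real^m$ and applying the concave subgradient inequality \eqref{eq:subgradient-dual} with $s'=0$, $p'=p^*$, I obtain
\begin{equation*}
D(0, p^*) \leq D\bigl(s^{k,j}, p^k - c_k u^{k,j}\bigr) - \inner{x^{k,j}}{0 - s^{k,j}} - \inner{u^{k,j}}{p^* - (p^k - c_k u^{k,j})}.
\end{equation*}
Passing $j \to \infty$ and using $u^{k,j} \to 0$, $\inner{x^{k,j}}{s^{k,j}} \to 0$ from \eqref{CSPinner}, together with the upper semicontinuity of $D$ applied to $(s^{k,j}, p^k - c_k u^{k,j}) \to (0, p^k)$, I obtain $D(0, p^*) \leq D(0, p^k)$ for every $p^*$. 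Since this holds for all $p^*$, $p^k$ is dual optimal.

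Finally, for the asymptotic primal optimality, the plan is to invoke Fenchel's equality on the subgradient inclusion $(x^{k,j}, u^{k,j}) \in \partial D(s^{k,j}, p^k - c_k u^{k,j})$, producing
\begin{equation*}
F(x^{k,j}, u^{k,j}) = D\bigl(s^{k,j}, p^k - c_k u^{k,j}\bigr) + \inner{x^{k,j}}{s^{k,j}} + \inner{u^{k,j}}{p^k - c_k u^{k,j}}.
\end{equation*}
Taking $\limsup$ as $j \to \infty$, the middle term vanishes by \eqref{CSPinner}, the third vanishes because $u^{k,j} \to 0$ and $p^k - c_k u^{k,j}$ is bounded, and the first is bounded above by $D(0, p^k)$ via upper semicontinuity of $D$. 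Combining with weak duality $D(0, p^k) \leq \inf_{x} F(x, 0)$ (which holds regardless of whether a saddle point exists) yields the desired bound $\limsup_{j} F(x^{k,j}, u^{k,j}) \leq \inf_{x \in \real^n} F(x, 0)$. The main delicate point throughout is the careful application of upper semicontinuity together with the bounded/convergent auxiliary sequences; no argument here requires the existence of a saddle point, since $p^k$ itself supplies the needed dual witness.
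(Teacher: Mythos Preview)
Your proof is correct and, for case~\ref{item:notstuck}, essentially identical to the paper's argument (you are slightly more careful than the paper in treating the $\Delta_k=0$ boundary case when verifying $\rho_k>0$, which is a good habit).

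For case~\ref{item:stuckInner}, the dual-optimality argument matches the paper's, but your route to the primal bound differs. The paper works directly with the primal subgradient inequality $(s^{k,j},\,p^k-c_ku^{k,j})\in\partial F(x^{k,j},u^{k,j})$, instantiating it at $(x,0)$ for arbitrary $x$ and passing to the limit to get $F(x,0)\geq\limsup_j F(x^{k,j},u^{k,j})$ without ever touching $D$ or weak duality. You instead invoke Fenchel's equality to express $F(x^{k,j},u^{k,j})$ in terms of $D$, bound the $D$-term via upper semicontinuity, and finish with weak duality. Both are valid; your approach is more unified (it recycles the same upper-semicontinuity step already used for dual optimality), while the paper's is slightly more self-contained (it never needs weak duality or the value $D(0,p^k)$ for the primal conclusion).
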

\begin{proof}
First consider case~\ref{item:notstuck}, in which the outer loop runs
indefinitely.  Fixing any $k \geq 0$, our goal will now be to show that all
the conditions in Proposition~\ref{prop:convergeUSA} hold.  The inner loop
termination conditions and immediately subsequent assignments in
steps~\ref{step:genericInnerStop}-\ref{step:jfreeze2} imply that $A_k < U_k$
and $\Delta_k \geq 0$.  Since $A_k \geq 0$ (as may be seen by examining
steps~\ref{step:calcAkj} and~\ref{step:jfreeze2} of the algorithm), it
immediately follows that $U_k > A_k \geq 0$.  Thus, the assumptions $U_k > 0$
and $\Delta_k \geq 0$ in Proposition~\ref{prop:convergeUSA} are satisfied.
From the compatible subproblem process definition and the calculations in
steps~\ref{step:calcUkj}-\ref{step:calcDeltakj}, it is now clear that all the
remaining conditions
in~\eqref{eq:alt-frame-min-again}-\eqref{eq:alt-frame-delta-again} hold.

Next, we claim that $\gamma_k \in [-1,1]$ and $\rho_k$ in
step~\ref{step:calcrhok} meets the conditions
in~\eqref{eq:alt-frame-gamma-again}-\eqref{eq:alt-frame-rho-again}.  The form
of the calculation in step~\ref{step:calcrhok} exactly matches the equation
in~\eqref{eq:alt-frame-rho-again}, so showing that $\gamma_k \in [-1,1]$ and
$\rho_k > 0$ will be sufficient to establish the claim.  Now, since $A_k <
U_k$, the value $(A_k - U_k)/\sqrt{\Delta_k}$ calculated in
step~\ref{step:genericInnerStop} is negative, and $-1 \leq \gamma_k^{\min}
< 0$ by construction.  This means that the range within which $\gamma_k$
may be selected in step~\ref{step:choosegammak} is nonempty.  Again by construction,
\begin{equation*}
  \gamma_k > \gamma_k^{\min} \geq \frac{A_k - U_k}{\sqrt{\Delta_k}},
\end{equation*}
and hence the numerator in the immediately ensuing calculation of $\rho_k$ is
\begin{align*}
  U_k - A_k + \gamma_k \sqrt{\Delta_k}
  > U_k - A_k + \left(\frac{A_k - U_k}{\sqrt{\Delta_k}}\right) \sqrt{\Delta_k}
  = 0.
\end{align*}
Furthermore, the denominator $U_k + S_k$ in the calculation of $\rho_k$ must
be positive since $U_k > A_k \geq 0$ and $S_k \geq 0$ by
steps~\ref{step:calcUkj}-\ref{step:calcAkj} and~\ref{step:genericInnerStop},
so one may conclude that $\rho_k > 0$, satisfying the restriction on $\rho_k$
in~\eqref{eq:alt-frame-rho-again}. Finally,
steps~\ref{step:wupdate}-\ref{step:pupdate} exactly match
conditions~\eqref{eq:alt-frame-w-again}-\eqref{eq:alt-frame-p-again}, so all
of~\eqref{eq:alt-frame-min-again}-\eqref{eq:alt-frame-p-again} hold.  Since
the choice of $k$ was arbitrary, the entire recursion system holds for all $k$
and we may invoke Proposition~\ref{prop:convergeUSA} to guarantee that all the
conclusions of~Proposition~\ref{prop:convergence} hold if saddle point of $L$
exists and at least one of the sequences $\{x^k\}$ or $\{\bar p^k\}$ is
unbounded if one does not.

It remains to consider case~\ref{item:stuckInner}, when the inner loop
continues indefinitely for some value of the outer loop index $k$.  The
proof for this case is in \ref{app:full-generic-stuck-inner}.
\end{proof}

The analysis of case~\ref{item:stuckInner} of the above proposition, in which
the method gets ``stuck'' in an infinite inner loop, does not require the
existence of a saddle point of $L$.  However, if such a saddle point
$(x^*,p^*)$ exists, then it is immediate that
\begin{equation*}
\liminf_{j\to\infty} F(x^{k,j},u^{k,j}) \leq F(x^*, 0) = D(0,p^*) = D(0,p^k).
\end{equation*}

\section{A Connection Between the ADMM and the Proximal Gradient Method}
\label{sec:connection-between-admm}

For the remainder of this paper, we concentrate on Fenchel-Rockafellar problem
class~\eqref{eq:conjugate-primal} under the parameterization~\eqref{eq:sum-F}.
The Fenchel-Rockafellar dual problem of \eqref{eq:primal-problem} is
\begin{equation} \label{eq:dual-problem}
    \min_{p\in \mathbb{R}^m} f^*(-M\transpose p) + g^*(p).
\end{equation}
Now consider applying the proximal point algorithm to the dual
problem~\eqref{eq:dual-problem}.  In its simplest form, applying the proximal
point algorithm to~\eqref{eq:dual-problem} may be written
\begin{equation} \label{eq:dual-ppa}
p^{k+1} = \argmin_{p\in\real^m} 
  \left\{
     f^*(-M\transpose p) + g^*(p) + \frac{1}{2c_k}\norm{p - p^k}^2     
  \right\},
\end{equation}
where $\{c_k\}$ is a sequence of positive scalars bounded away from zero, as
in the preceding sections. By a standard duality analysis following the ideas
first laid out in~\cite{Roc76b}, it can be shown that~\eqref{eq:dual-ppa} is
equivalent to the augmented Lagrangian
method~\eqref{almfenchelmin}-\eqref{almfenchelupdate}. Furthermore, relaxation
of the multiplier adjustment step, meaning that one adjusts the multipliers by
$\rho_k c_k(Mx^{k+1} - z^{k+1})$ instead of just $c_k(Mx^{k+1}
- z^{k+1})$, is equivalent to applying the relaxation factor $\rho_k$ to the dual
formulation of the algorithm~\eqref{eq:dual-ppa}.

Now consider an alternative formulation of~\eqref{eq:dual-ppa}:
after regrouping, the subproblem at each step may be written
\begin{equation} \label{eq:dual-subprob}
\min_{p\in\real^m} \left\{
    \left(  \big( f^* \circ \big(-M\transpose  \big) \big)(\cdot) + \frac{1}{2c_k}\| \cdot - p^k\|^2 \right)\!(p) + g^*(p) \right\}.
\end{equation}
Consider next the Fenchel dual of~\eqref{eq:dual-subprob}, whose
objective function is
\begin{multline} \label{eq:split-subprob}
    \left( \big( f^* \circ \big(-M\transpose  \big) \big)(\cdot) + \frac{1}{2c_k}\| \cdot - p^k\|^2 \right)^{*} \!\! (-z) + g^{**}(z) \\
    =  \left( \big( f^* \circ \big(-M\transpose  \big) \big)(\cdot) + \frac{1}{2c_k}\| \cdot - p^k\|^2 \right)^{*} \!\!(-z) + g(z),
\end{multline}
since $g^{**}(z) = g(z)$ by the standard conjugacy properties of closed convex
functions. For all $k \geq 0$, we adopt the notation
\begin{align} \label{eq:hkfk}
    h_k(p) &\doteq f^*(-M\transpose p) + \frac{1}{2c_k}\| p - p^k\|^2 & 
    f_k(z) &\doteq h_k^*(-z). %
\end{align}
Then the dual of the subproblem being solved in each step~\eqref{eq:dual-ppa} is
\begin{equation} \label{eq:defbal-prob}
    \min_{z\in\real^m} \big\{ f_k(z) + g(z) \big\}.
\end{equation}
Since $h_k$ is $(1/c_k)$-strongly convex, its conjugate $f_k$ is
differentiable, with a $c_k$-Lipschitz gradient. If the proximal mapping of
$g$ may be readily evaluated --- that is, $g$ is ``prox friendly'' --- it is
thus attractive to consider applying the proximal gradient (forward-backward)
method to the minimization in~\eqref{eq:defbal-prob}.  The same interaction
between strong convexity, duality, and the forward-backward splitting has been
used in a different context~\cite{CDV10} (but outside the context of an outer
augmented Lagrangian method, and with the roles of the primal and dual
reversed), and generalizations of this work appear in~\cite{CDV11,CV14}.  
This material is revisited within a general framework in~\cite{Com24}.

Applying the proximal gradient method involves first executing a forward
(gradient) step to $f_k$, followed by a proximal step on $g$. Although the
definition of $f_k$ may appear complicated, its gradient may be computed by
solving a relatively straightforward minimization problem, as shown in the
following lemma:

\begin{lemma} \label{lemma:derivative-fk}
    For any $k \geq 0$ and $z\in\real^m$, suppose that
    \begin{equation} \label{eq:x-min}
        \bar x \in \argmin_{x\in\real^n} 
           \left\{ f(x) + \inner{p^k}{Mx}  + \frac{c_k}{2}\norm{Mx - z}^2  \right\}. 
    \end{equation}
    Then $\nabla f_k(z) = -\big(p^k + c_k(M\bar x-z)\big)$.
\end{lemma}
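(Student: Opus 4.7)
The plan is to compute $\nabla f_k(z)$ via standard Fenchel conjugate identities and then match the resulting characterization of the gradient with the first-order optimality condition of the subproblem~\eqref{eq:x-min}. Because $h_k$ is $(1/c_k)$-strongly convex (thanks to the quadratic regularizer $\|\cdot - p^k\|^2/(2c_k)$), its conjugate $h_k^*$ is finite everywhere and differentiable with $c_k$-Lipschitz gradient. The chain rule applied to $f_k(z) = h_k^*(-z)$ then gives $\nabla f_k(z) = -\nabla h_k^*(-z)$, so the task reduces to identifying $\nabla h_k^*(-z)$.

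Next I would invoke the standard identity $\nabla h_k^*(-z) = p^*$, where $p^*$ is the unique maximizer of $p \mapsto \langle-z,p\rangle - h_k(p)$, equivalently the unique vector satisfying $-z \in \partial h_k(p^*)$. Applying the subgradient chain rule to $p \mapsto f^*(-M\transpose p)$ and adding the derivative of the smooth quadratic summand, this inclusion becomes: there exists $\xi$ with $\xi \in \partial f^*(-M\transpose p^*)$ and $p^* = p^k + c_k(M\xi - z)$. Fenchel reciprocity rewrites $\xi \in \partial f^*(-M\transpose p^*)$ as $0 \in \partial f(\xi) + M\transpose p^*$, and substituting the expression for $p^*$ yields
\begin{equation*}
0 \in \partial f(\xi) + M\transpose p^k + c_k M\transpose(M\xi - z),
\end{equation*}
which is exactly the first-order optimality condition for the subproblem~\eqref{eq:x-min} defining $\bar x$. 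Reading this derivation backwards, any minimizer $\bar x$ supplies a valid pair $(\xi,p^*) = \big(\bar x,\; p^k + c_k(M\bar x - z)\big)$, and the desired formula $\nabla f_k(z) = -p^* = -\bigl(p^k + c_k(M\bar x - z)\bigr)$ follows.

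The main obstacle I anticipate is possible non-uniqueness of $\bar x$ in~\eqref{eq:x-min}, since $f$ need not be strictly convex. I plan to dispatch this by observing that although $\bar x$ itself may not be unique, the composite $M\bar x$ is: if $\bar x_1$ and $\bar x_2$ were both minimizers of~\eqref{eq:x-min} with $M\bar x_1 \neq M\bar x_2$, strict convexity of $y \mapsto (c_k/2)\|y - z\|^2$ would make the midpoint strictly better than the average of the two objective values, contradicting optimality. Thus $p^k + c_k(M\bar x - z)$ is well defined regardless of which minimizer is chosen. A secondary concern is justifying the subgradient chain rule for $f^*\circ(-M\transpose)$; since the other summand of $h_k$ is everywhere finite and continuous, standard subdifferential calculus applies, but as a safeguard one could instead verify the candidate $p^* = p^k + c_k(M\bar x - z)$ attains the supremum in $\sup_p\{\langle-z,p\rangle - h_k(p)\}$ directly, using $\bar x$'s optimality in~\eqref{eq:x-min} to bypass any chain-rule hypothesis.
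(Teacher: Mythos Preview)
Your proposal is correct and follows essentially the same route as the paper's proof: both identify $\nabla h_k^*(-z)$ as the unique solution of the Fenchel optimality condition $-z \in \partial h_k(p^*)$, expand this via the subdifferential of $h_k$, and match it to the first-order condition of~\eqref{eq:x-min} through the equivalence $\xi \in \partial f^*(-M\transpose p^*) \Leftrightarrow -M\transpose p^* \in \partial f(\xi)$, concluding with the chain rule $\nabla f_k(z) = -\nabla h_k^*(-z)$. Your additional remark that $M\bar x$ is unique even when $\bar x$ is not is a nice touch the paper leaves implicit (it instead relies on uniqueness of the dual argmin $p^*$).
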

\begin{proof}
See~\ref{app:derivative-fk-proof}.
\end{proof}
Now consider, at any step $k\geq 0$ of proximal point
algorithm~\eqref{eq:dual-ppa}, employing the proximal gradient method to solve
the dual subproblem~\eqref{eq:defbal-prob}.  Let $j$ represents the ``inner,''
proximal-gradient iteration counter within each subproblem, and denote the
iterates of the proximal gradient sub-algorithm by $z^{k,j}$ and its stepsizes
by $\alpha_{k,j} > 0$. The subproblem solution procedure may be expressed as
\begin{equation} \label{eq:abstract-subproblem-fb}
z^{k,j+1} 
  = \prox_{(\alpha_{k,j}) g}\!\big(z^{k,j} - \alpha_{k,j}\nabla f_k(z^{k,j})\big).
\end{equation}
By Lemma~\ref{lemma:derivative-fk}, if one has
\begin{equation} \label{eq:admm-like-xkj}
 x^{k,j+1} \in \argmin_{x\in\real^n} 
           \left\{ f(x) + \inner{p^k}{Mx}  + \frac{c_k}{2}\norm{Mx - z^{k,j}}^2  \right\}, 
\end{equation}
then $\nabla f_k(z^{k,j}) = -\big(p^k + c_k(Mx^{k,j+1}-z^{k,j})\big)$. Substituting
this formula into~\eqref{eq:abstract-subproblem-fb}, one obtains
\begin{align*}
    z^{k,j+1} & = \prox_{(\alpha_{k,j}) g}
                     \left(z^{k,j} + \alpha_{k,j}\big(p^k + c_k(Mx^{k,j+1}-z^{k,j})
                     \big)\right) \\
              & = \prox_{(\alpha_{k,j}) g}\left(c_k \alpha_{k, j} M x^{k, j+1}
                  +\left(1-c_k \alpha_{k, j}\right) z^{k, j}+\alpha_{k, j} p^k \right).
\end{align*}
By the definition $\prox_{c g}(w) \doteq \argmin_{z \in
\mathbb{R}^n}\big\{g(z)+\frac{1}{2 c}\smallnorm{z-w}^2\big\}$, one then obtains
\begin{align}
    z^{k, j+1} & = \argmin_{z\in\real^m}\left\{g(z)+\frac{1}{2 \alpha_{k, j}}\left\|z-\left(c_k \alpha_{k, j} M x^{k, j+1}+\left(1-c_k \alpha_{k, j}\right) z^{k, j}+\alpha_{k, j} p^k\right)\right\|^2\right\} \nonumber \\
    & =\argmin_{z\in\real^m}\left\{g(z)-\left\langle p^k, z\right\rangle+\frac{1}{2 \alpha_{k, j}}\left\|c_k \alpha_{k, j} M x^{k, j+1}+\left(1-c_k \alpha_{k, j}\right) z^{k, j}-z\right\|^2\right\},
    \label{eq:zkj-with-alpha}
\end{align}
where the second equality follows by expanding the square and dropping a
constant term. 

For each $k$ and $j$, define $\nu_{k,j}=c_k\alpha_{k,j}$, so 
$\alpha_{k,j}=\nu_{k,j}/c_k$. One may then express the calculation of $z^{k,j}$ as
\begin{equation*}
    z^{k, j+1}=\argmin_{z\in\real^m}\left\{g(z)-\left\langle p^k, z\right\rangle+\frac{c_k}{2 v_{k, j}}\left\|\left(v_{k, j} M x^{k, j+1}+\left(1-v_{k, j}\right) z^{k, j}\right)-z\right\|^2\right\}.
\end{equation*}
Recalling the calculation of $x^{k,j+1}$ from~\eqref{eq:admm-like-xkj}, we
conclude that the proximal gradient method~\eqref{eq:abstract-subproblem-fb}
may be performed through the recursions,
\begin{align}
x^{k,j+1} &\in \argmin_{x\in\real^n} 
           \left\{ f(x) + \inner{p^k}{Mx}  
             + \frac{c_k}{2}\norm{Mx - z^{k,j}}^2  \right\} \label{eq:admm-xkj-again} \\
    z^{k, j+1} &=\argmin_{x\in\real^n}\left\{g(z)-\left\langle p^k, z\right\rangle+\frac{c_k}{2 v_{k, j}}\left\|\left(v_{k, j} M x^{k, j+1}+\left(1-v_{k, j}\right) z^{k, j}\right)-z\right\|^2\right\}, \label{eq:admm-zkj}
\end{align}
for some positive scalars $\nu_{k,j}$, provided that~\eqref{eq:admm-xkj-again}
can be solved.

Since $\nabla f_k$ is $c_k$-Lipschitz continuous, natural choices of the
proximal gradient stepsizes $\alpha_{k,j}$ satisfy $0 < \inf_j
\{\alpha_{j,k}\}$ and $\sup_j \{\alpha_{j,k}\} < 2/c_k$, which is equivalent to 
having $0 < \inf_j\{\nu_{k,j}\}$ and $\sup_j \{\nu_{j,k}\} < 2$.  If one
chooses the midpoint of this range, $\nu_{k,j} = 1$, the above recursions
simplify to
\begin{align}
& x^{k, j+1} \in \argmin_{x\in\real^n}\left\{f(x)+\left\langle p^k, M x\right\rangle+\frac{c_k}{2}\left\|M x-z^{k, j}\right\|^2\right\} \label{eq:admm-like-x-nukj1} \\
& z^{k, j+1}=\argmin_{x\in\real^n}\left\{g(z)-\left\langle p^k, z\right\rangle+\frac{c_k}{2}\left\|M x^{k, j+1}-z\right\|^2\right\},  \label{eq:admm-like-z-nukj1}
\end{align}
Other than the indexing scheme, these recursion are identical to the first two
steps of the ADMM for solving the problem~\eqref{eq:primal-problem}.  

Of course $\nu_{j,k} = 1$ is not the only possible choice, and the natural
range of $\nu_{j,}$ implied by the theory of the proximal gradient method is
between $0$ and $2$ (and bounded strictly away from each).  With these bounds,
\eqref{eq:admm-xkj-again}-\eqref{eq:admm-zkj} strongly resemble the first two
steps of the generalized ADMM first introduced in~\cite{EckBer92}, the only
difference being the presence of $\nu_{k,j}$ in the denominator
in~\eqref{eq:admm-zkj}.

As mentioned in the introduction, this alternating direction recursion for
solving the subproblem~\eqref{eq:dual-ppa} was suggested as long ago as
1975~\cite{GM75}.  As also pointed out in the introduction, prior theoretical
results regarding its convergence were not entirely satisfactory; given its
equivalence to~\eqref{eq:abstract-subproblem-fb}, its convergence can be
inferred from the behavior of the proximal gradient method.  We now make this
linkage in the course of demonstrating that the generalization of alternating
minimization in~\eqref{eq:admm-like-x-nukj1}-\eqref{eq:admm-like-z-nukj1}
yields a compatible subproblem process, corresponding to $\nu_k \equiv 1$.
More general choices of $\nu_k$ are possible, but for brevity we only consider
the simple case~\eqref{eq:admm-like-x-nukj1}-\eqref{eq:admm-like-z-nukj1}
here.

\begin{proposition} \label{prop:alternatingCSP}
Consider the Fenchel-Rockafellar problem~\eqref{eq:conjugate-primal} under the
parameterization $F$ in~\eqref{eq:sum-F}. Fix any $k\geq 0$, $c_k\geq 0$, and
$p^k\in\real^m$, and assume that the corresponding augmented Lagrangian
subproblem~\eqref{eq:defbal-prob} has a solution. Then, for any $z^{k,0}
\in \real^m$, any sequence $\{x^{k,j}\}$ and $\{z^{k,j}\}$ conforming to the
recursions~\eqref{eq:admm-like-x-nukj1}-\eqref{eq:admm-like-z-nukj1}, together with the
sequence $\{s^{k,j}\}\in\real^n \times \real^n$ defined by
\begin{equation}\label{subgrad-fb}
(\forall\,j \geq 0) \qquad
    s^{k,j+1} \doteq 
    \begin{pmatrix}
        s_x^{k,j+1} \\
        0
    \end{pmatrix} \doteq
    \begin{pmatrix}
        c_k M\transpose (z^{k,j} - z^{k,j+1}) \\
        0
    \end{pmatrix},
\end{equation}
constitutes a compatible subproblem process for $(F,p^k,c_k)$.
\end{proposition}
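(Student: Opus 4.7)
The strategy is to verify the three requirements of Definition~\ref{def:subSolver} in turn: the subgradient inclusion~\eqref{CSPsubgrad}, the vanishing $s^{k,j} \to 0$ in~\eqref{CSPconverge}, and the inner-product condition~\eqref{CSPinner}. Throughout, the ``abstract'' $u$-variable will be taken to be $u^{k,j+1} \doteq z^{k,j+1} - Mx^{k,j+1}$, since this identification is forced by the constraint embedded in $F$ in~\eqref{eq:sum-F} at any point where $\partial F$ is nonempty.

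For~\eqref{CSPsubgrad}, I would use the reformulation~\eqref{eq:sum-frame-subgrad-x}-\eqref{eq:sum-frame-subgrad-z} already worked out in Section~\ref{sec:sum-of-two-convex}. The first-order optimality condition for~\eqref{eq:admm-like-x-nukj1} gives $-M\transpose p^k - c_k M\transpose(Mx^{k,j+1} - z^{k,j}) \in \partial f(x^{k,j+1})$; substituting this subgradient element into the right-hand side of~\eqref{eq:sum-frame-subgrad-x} makes the $M\transpose p^k$ and $c_k M\transpose Mx^{k,j+1}$ terms cancel, leaving exactly the telescoping expression $s_x^{k,j+1} = c_k M\transpose(z^{k,j} - z^{k,j+1})$ from~\eqref{subgrad-fb}. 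The optimality condition of~\eqref{eq:admm-like-z-nukj1} shows in the same fashion that $s_z^{k,j+1} = 0$ satisfies~\eqref{eq:sum-frame-subgrad-z}, confirming the inclusion.

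For~\eqref{CSPconverge}, I would appeal to the equivalence developed earlier in Section~\ref{sec:connection-between-admm}: the recursions~\eqref{eq:admm-like-x-nukj1}-\eqref{eq:admm-like-z-nukj1} are exactly the proximal gradient method applied to the dual subproblem~\eqref{eq:defbal-prob} with the constant stepsize $\alpha_{k,j} = 1/c_k$, and $\nabla f_k$ is $c_k$-Lipschitz. This stepsize lies strictly in $(0,2/c_k)$, and a minimizer of~\eqref{eq:defbal-prob} exists by hypothesis, so standard iterate-convergence theory for proximal gradient iterations~\cite[Sections~10.4 and~10.6]{BeckBook} implies that $\{z^{k,j}\}$ converges to a solution. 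In particular $z^{k,j} - z^{k,j+1} \to 0$, and hence $s^{k,j+1} \to 0$ directly from its defining formula.

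The main technical obstacle is the inner-product condition~\eqref{CSPinner}. Since $s_z^{k,j+1} \equiv 0$, it collapses to $c_k\inner{Mx^{k,j+1}}{z^{k,j} - z^{k,j+1}} \to 0$, and one might initially worry that the $x$-minimizers could be unbounded, since $f$ is not assumed strongly convex and the $x$-subproblem~\eqref{eq:admm-like-x-nukj1} need not have a bounded solution set. The key observation is that only the image $Mx^{k,j+1}$ enters the inner product, and Lemma~\ref{lemma:derivative-fk} provides the explicit representation
\begin{equation*}
Mx^{k,j+1} \;=\; z^{k,j} - \tfrac{1}{c_k}\bigl(p^k + \nabla f_k(z^{k,j})\bigr).
\end{equation*}
Boundedness of the convergent sequence $\{z^{k,j}\}$, together with the continuity of $\nabla f_k$, then immediately yields boundedness of $\{Mx^{k,j+1}\}$, which combined with $z^{k,j} - z^{k,j+1} \to 0$ completes the verification.
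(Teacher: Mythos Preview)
Your proposal is correct and follows essentially the same route as the paper's proof: verifying~\eqref{CSPsubgrad} from the optimality conditions of the two alternating minimizations, invoking proximal-gradient iterate convergence for~\eqref{CSPconverge}, and handling~\eqref{CSPinner} by using Lemma~\ref{lemma:derivative-fk} to control $Mx^{k,j+1}$ via the continuity of $\nabla f_k$. The only cosmetic difference is that the paper phrases the subgradient verification through $\partial L$ and records convergence (rather than mere boundedness) of $\{Mx^{k,j+1}\}$, but your boundedness observation already suffices.
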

\begin{proof}
See~\ref{app:alternatingCSP-proof}.
\end{proof}

In brief, the above proposition states that alternating minimization is
guaranteeably usable subproblem solver for applying inexact augmented
Lagrangian -- such as the ones analyzed here and the simpler case covered
in~\cite{eckstein2013practical} --- to problems in Fenchel-Rockafellar form.
However, the insights uncovered earlier in this section imply that there may
be faster ways to use the same basic operations to minimize the augmented
Lagrangian, as we will now discuss.

\section{An ADMM-Like Framework}
\label{sec:admm-like-framework}
The above proposition provides a rigorous justification for using the
iteration~\eqref{eq:admm-like-x-nukj1}-\eqref{eq:admm-like-z-nukj1} to perform
the augmented Lagrangian minimization~\eqref{almfenchelmin}.  Before, the best
available result to justify this procedure appeared to be a subsequential
convergence result due to Tseng~\cite{Tse01}, but this result was designed for
considerably more general situations and did not take full advantage of the
duality structure of~\eqref{almfenchelmin}.  By itself, however, the result is
not of great practical interest since running many iterations
of~\eqref{eq:admm-like-x-nukj1}-\eqref{eq:admm-like-z-nukj1} to attempt to
minimize the augmented Lagrangian can be very inefficient, as noted
in~\cite{PJO}.

However, the insight that applying alternating minimization to
solve~\eqref{almfenchelmin} is equivalent to a dual application of the
proximal gradient method opens a range of new algorithmic variations beyond
those examined in~\cite{PJO}:
\begin{itemize}
\item One may use any of the available approximation criteria to decide when an approximate
solution to the subproblem is acceptable.  In particular, we consider using
the new approximate relaxed augmented Lagrangian method of
Section~\ref{sec:extended-criterion}, adjusting the relaxation of the
multiplier update step to match the accuracy of the subproblem solution.  This
strategy had not been developed when~\cite{PJO} was written.

\item Instead of applying the standard proximal gradient method to the dual of
each subproblem, one may instead employ essentially any convergent method that
minimizes the sum of two convex functions through gradient steps on one
function and proximal steps on the other.  Specifically, rather than use a
``vanilla'' proximal gradient method, one may use an ``accelerated'' method
such as the FISTA and related methods described in~\cite{beck2009fast}
and~\cite{chambolle2015convergence}.  The resulting ``inner loops'' that
approximately solve the subproblems are somewhat more complicated
that~\eqref{eq:admm-xkj-again}-\eqref{eq:admm-zkj} and its special
case~\eqref{eq:admm-like-x-nukj1}-\eqref{eq:admm-like-z-nukj1}, but still use
the same basic operations.
\end{itemize}

As in Section~\ref{sec:connection-between-admm}, our notation uses $k$ to
index the steps of the ``outer'', inexact augmented Lagrangian method, while
$j$ indexes the steps of the inner, proximal-gradient-class method that
approximately solves the augmented Lagrangian subproblems. This ``inner''
method progressively improves the $x^{k,j+1}$ and $z^{k,j+1}$ solutions by
minimizations respectively involving $f$ and $g$, until a stopping criterion
is met. The associated subgradient $s^{k,j+1}$ at inner iteration $j+1$ is
given by the equivalent forms
of~\eqref{eq:sum-frame-bar-p}-\eqref{eq:sum-frame-s} stated
in~\eqref{eq:sum-frame-subgrad-x}-\eqref{eq:sum-frame-subgrad-z}.  Including the inner-iteration indices $j$ in these relations produces
\begin{equation} \label{eq:subgradient-lagrange-kj}
    s^{k,j+1} =
    \begin{pmatrix}
        s_x^{k,j+1} \\ s_z^{k,j+1}
    \end{pmatrix} \in
    \begin{pmatrix}
        \partial f(x^{k,j+1}) + M\transpose p^k + c_kM\transpose (Mx^{k,j+1}-z^{k,j+1}) \\
        \partial g(z^{k,j+1}) - p^k - c_k(Mx^{k,j+1}-z^{k,j+1})
    \end{pmatrix},
\end{equation}
Suitable values of $s_x^{k,j+1}$ and $s_z^{k,j+1}$ may be obtained as
byproducts of the $x$ and $z$ minimizations in~\eqref{eq:admm-xkj-again}
and~\eqref{eq:admm-zkj}, so little extra effort is required to obtain them.
Similarly to the decomposition of $s^{k,j+1}$, we also decompose $w^{k}$ into
$w_x^{k}$ and $w_z^{k}$, and the $w^k$ update~\eqref{eq:alt-frame-w}
or~\eqref{eq:sum-frame-w} may be expressed as the two recursions $w_x^{k+1} =
w_x^{k} - \rho_{k} c_{k} s_x^{k}$ and $w_z^{k+1} = w_z^{k} - \rho_{k}
c_{k} s_z^{k}$. Including the inner-loop indices $j$
in~\eqref{eq:sum-frame-approx}, on may express the principal stopping
criterion for the inner loop as
\begin{multline} \label{eq:inner-stopping}
    \frac{2\rho_k}{c_k}
    \abs{\inner{x^{k,j+1} - w_x^k}{s_x^{k,j+1}}} + \frac{2\rho_k}{c_k}\abs{\inner{z^{k,j+1} - w_z^k}{s_z^{k,j+1}}} + \rho_k^2 \norm{s^{k,j+1}}^2
    \\ \leq \parent{2\rho_k - \rho_k^2 - \epsilon} \norm{Mx^{k,j+1}-z^{k,j+1}}^2. 
\end{multline}
If this condition holds at inner iteration $j$, one may set
$x^{k+1}=x^{k,j+1}$, $z^{k+1}=z^{k,j+1}$, and $s^{k+1}=s^{k,j+1}$ to obtain
sequences $\{x^k\}$, $\{z^k\}$, and $\{s^k\}$ that conform to our main
recursions~\eqref{eq:sum-frame-bar-p}-\eqref{eq:sum-frame-p}.

\subsection{FISTA-CD as the Subproblem Solver}
\label{sec:fista-cd}
We now consider using a FISTA-family algorithm to solve the subproblems,
instead of an ordinary forward-backward method (shown in
Section~\ref{sec:connection-between-admm} to be equivalent to alternating
minimization of the augmented Lagrangian). We now state the details of the
algorithm and its convergence results.

The original FISTA algorithm with constant step size was proposed
in~\cite{beck2009fast}. It applies to the sum of a smooth convex function with
$L$-Lipschitz gradient, which in our case it will be $f_k$ as defined
in~\eqref{eq:hkfk}, and a continuous convex function, which in our case it
will be $g$. The FISTA method, as applied to $f_k$ and $g$, is expressed as
\begin{align}
    z^{k,j+1} & =p_{L,k}(y^{k,j}),            \label{eq:gen-fista-z}             \\
    y^{k,j+1} & =z^{k,j+1}+\tfrac{t_{j}-1}{t_{j+1}}(z^{k,j+1}-z^{k,j}), 
                                              \label{eq:gen-fista-y}
\end{align}
where $p_{L,k}(y)$ is defined as 
\begin{align} \label{eq:gen-fista-z2}
    p_{L,k}(y) &\doteq \argmin_{x\in\real^m} \left\{ g(x) + \frac{L}{2}\norm{x-\left(y-\frac{1}{L}\nabla f_k(y)\right)}^2  \right\} \\
    &= \prox_{(1/L)g}\!\big(y - (1/L)\nabla f_k(y)\big), \nonumber
\end{align}
and $t_{j+1}$ is a sequence of non-negative real numbers. Convergence the
function values $f_k(z^{k,j}) + g(z^{k,j})$ to the optimal value as
$j\to\infty$ is guaranteed if the sequence $t_{j+1}$ satisfies the following
inequality in~\cite[Theorem 4.1]{beck2009fast}:
\begin{equation} \label{eq:fista-t-inequality}
    (\forall\, j \geq 1)\quad t_{j+1}^2-t_{j+1} \leq t_{j}.
\end{equation}
The sequence given by~\cite{beck2009fast} in original FISTA is $t_{1}=1$ and
\begin{equation*}
    (\forall\, j \geq 1) \quad t_{j+1}=\frac{1+\sqrt{1+4t_{j}^2}}{2}.
\end{equation*}
Since our application requires convergence of iterates as well as function
values, we will instead employ the variant of FISTA introduced by Chambolle
and Dossal in~\cite{chambolle2015convergence}. This version of the algorithm
instead defines $t_j$ by
\begin{equation} \label{eq:fista-cd-tj}
    (\forall\, j\geq 1) \quad t_j=\frac{j+a-1}{a},
\end{equation}
where $a > 2$ is an arbitrary constant. It is easily verified that this
definition also satisfies~\eqref{eq:fista-t-inequality}; in addition, however,
it is shown in~\cite{chambolle2015convergence} that this form of $\{t_j\}$
also guarantees the convergence of (in the present notation)
$\{z^{k,j}\}_{j=1}^{\infty}$ to a minimizer of $f_k+g$. We denote this variant
of FISTA by ``FISTA-CD''.

The calculation of $z^{k,j+1} = p_{L,k}(y^{k,j}) =
\prox_{(1/L)g}\!\big(y^{k,j} - (1/L)\nabla f_k(y)\big)$ is essentially
identical to~\eqref{eq:abstract-subproblem-fb} with $\alpha_{k,j} = 1/L = 1/c_k$ and
$z^{k,j}$ replaced by $y^{k,j}$.  Therefore, following the same logic leading
from~\eqref{eq:abstract-subproblem-fb}
to~\eqref{eq:admm-like-x-nukj1}-\eqref{eq:admm-like-z-nukj1}, one obtains that an
equivalent calculation is 
\begin{align}
x^{k,j+1} &\in \argmin_{x\in\realn} 
           \left\{ f(x) + \inner{p^k}{Mx}  
             + \frac{c_k}{2}\norm{Mx - y^{k,j}}^2  \right\}, \label{eq:admm-xkj-cd} \\
z^{k,j+1} &= \argmin_{z\in\realm}\left\{g(z)-\left\langle p^k, z\right\rangle+\frac{c_k}{2}\left\| M x^{k, j+1}-z\right\|^2\right\}, \label{eq:admm-zkj-cd}
\end{align}
which is identical
to~\eqref{eq:admm-like-x-nukj1}-\eqref{eq:admm-like-z-nukj1}, except that
$z^{k,j}$ in~\eqref{eq:admm-like-x-nukj1} is replaced by $y^{k,j}$.

Expressing a subgradient of the augmented Lagrangian at $(x^{k,j},z^{k,j+1})$
in the manner of~\eqref{eq:subgradient-lagrange-kj} after this pair of operations leads to
\begin{align} \label{eq:subgradient-lagrange-kj0}
    s^{k,j+1} =
    \begin{pmatrix}
        s_x^{k,j+1} \\ s_z^{k,j+1}
    \end{pmatrix} \in
    \begin{pmatrix}
        \partial f(x^{k,j+1}) + M\transpose p^k + c_kM\transpose (Mx^{k,j+1}-z^{k,j+1}) \\
        \partial g(z^{k,j+1}) - p^k - c_k(Mx^{k,j+1}-z^{k,j+1})
    \end{pmatrix}
    =
    \begin{pmatrix}
    s_x^{k,j+1} \\ 0
    \end{pmatrix},
\end{align}
where the $s_z^{k,j+1}$ component being zero is an immediate consequence of
the natural necessary optimality condition for~\eqref{eq:admm-zkj-cd}.
Rewriting the expression for the $x$ component of the subgradient yields
\begin{align}
   &\quad\;\partial f(x^{k,j+1}) + M\transpose p^k + c_k M\transpose(Mx^{k,j+1}-z^{k,j+1})                               \nonumber  \\
   & = \partial f(x^{k,j+1}) + M\transpose p^k + c_k M\transpose(Mx^{k,j+1}-y^{k,j}) + c_k M\transpose (y^{k,j}-z^{k,j+1}) \nonumber \\
   & \ni 0 + c_k M\transpose (y^{k,j}-z^{k,j+1}) \nonumber \\
   & = c_k M\transpose(y^{k,j}-z^{k,j+1}), \label{simplifiedSubgrad}
\end{align}
where the ``$\ni$'' relation holds from the standard necessary optimality
condition $0 \in \partial f(x^{k,j+1}) + M\transpose p^k + c_k
M\transpose(Mx^{k,j+1} - y^{k,j})$ for~\eqref{eq:admm-xkj-cd}.  So, we may
select the subgradients $s_x^{k,j+1} = c_k M\transpose(y^{k,j}-z^{k,j+1})$ and
$s_z^{k,j+1} = 0$.  Having $s_z^{k,j+1} = 0$ causes the acceptance
criterion~\eqref{eq:inner-stopping} to simplify to
\begin{equation} \label{eq:fista-inner-stopping}
    \frac{2\rho_k}{c_k}
    \abs{\inner{x^{k,j+1} - w_x^k}{s_x^{k,j+1}}} + \rho_k^2 \norm{s^{k,j+1}}^2
     \leq \parent{2\rho_k - \rho_k^2 - \epsilon} \norm{Mx^{k,j+1}-z^{k,j+1}}^2. 
\end{equation}
Assuming that we will use the adaptive relaxation procedure proposed in
Section~\ref{sec:ways-to-use}, the discriminant in~\eqref{eq:defDeltak} may be
written
\begin{align} 
\Delta_{k,j} 
&\doteq \left(\norm{u^{k,j+1}}^2 
      - \frac{1}{c_k} \abs{\inner{x^{k,j+1} - w_x^k}{s_x^{k,j+1}}} 
      \right)^2
         - \epsilon\left( \norm{u^{k,j+1}}^4 
          + \norm{u^{k,j+1}}^2 \norm{s_x^{k,j+1}}^2 \right) \nonumber 
         \\
&= \left(\norm{u^{k,j+1}}^2 
       - \frac{1}{c_k} \abs{\inner{x^{k,j+1} - w_x^k}{c_k M\transpose(y^{k,j}-z^{k,j+1})}} 
       \right)^2 \nonumber \\
&\qquad \qquad \qquad \qquad \qquad \qquad \qquad \qquad
          - \; \epsilon \left( \norm{u^{k,j+1}}^4 
           + \norm{u^{k,j+1}}^2 \norm{s_x^{k,j+1}}^2 \right) \nonumber \\
&= \left(\norm{u^{k,j+1}}^2 - \abs{(y^{k,j}-z^{k,j+1})\transpose M (x^{k,j+1} - w_x^k)} \right)^2
           \nonumber \\
&\qquad \qquad \qquad \qquad \qquad \qquad \qquad \qquad
          - \; \epsilon \left( \norm{u^{k,j+1}}^4 
           + \norm{u^{k,j+1}}^2 \norm{s_x^{k,j+1}}^2 \right), \label{eq:specific-Delta-kj}
\end{align}
with $u^{k,j+1} = z^{k,j+1} - Mx^{k,j+1}$, as dictated by~\eqref{eq:sum-F}.
Specializing the definitions of $U_k$, $A_k$, and $S_k$ given above to the
current choice of $F$, we have
\begin{align*}
U_{k,j} &\doteq \norm{u^{k,j+1}}^2 = \norm{Mx^{k,j+1} - z^{k,j+1}}^2 \\
A_{k,j} &\doteq \abs{(y^{k,j}-z^{k,j+1})\transpose M (x^{k,j+1} - w_x^k)} \\
S_{k,j} &\doteq \norm{s_x^{k,j}}^2 = c_k^2 \norm{M\transpose(y^{k,j}-z^{k,j+1})}^2,
\end{align*}
in which case~\eqref{eq:specific-Delta-kj} simplifies to
\begin{equation}
\Delta_{k,j} = (U_{k,j} - A_{k,j})^2 - \epsilon (U_{k,j}^2 + U_{k,j} S_{k,j}),
\end{equation}
and, when $\Delta_{k,j} \geq 0$, the allowable range for the relaxation factor
$\rho_k$ as specified in~\eqref{eq:rhok-range-ineq} simplifies to 
\begin{equation*}
\frac{U_{k,j} - A_{k,j} - \sqrt{\Delta_{k,j}}}{U_{k,j} + S_{k,j}}
\leq \rho_k \leq
\frac{U_{k,j} - A_{k,j} + \sqrt{\Delta_{k,j}}}{U_{k,j} + S_{k,j}},
\end{equation*}
which may be modeled by selecting any $\gamma_k \in [-1,1]$ and setting
\begin{equation}
\rho_k = \frac{U_{k,j} - A_{k,j} + \gamma_k\sqrt{\Delta_{k,j}}}{U_{k,j} + S_{k,j}}.
\end{equation}
Using these results and including the calculations~\eqref{eq:gen-fista-y}
and~\eqref{eq:fista-cd-tj} from the FISTA-CD method leads to
Algorithm~\ref{alg:ALM-AR-FISTA-CD}, whose behavior is analyzed in the
next section.

\begin{algorithm}[t]{}
    \caption{ALM-AR-FISTA-CD Algorithm} \label{alg:ALM-AR-FISTA-CD}
    \begin{algorithmic}[1]
        \State \textbf{Initialization} Choose $p^0 \in \real^m$, $z^{0} \in \real^m$, $w^{0}=0$, and pick $a > 2$, $\epsilon \in (0,1)$
        \For {$k=0,1,2, \cdots$} \algorithmiccomment{Outer loop}
        \State Choose $c_k>0$, set $z^{k,1}=y^{k,1}=z^{k}$, $t_{k,1}=1$ 
               \label{step:innerLoopInit}
        \Repeat{~for $j=1, 2, \ldots$} \algorithmiccomment{Inner loop}
        \State {$x^{k,j+1} \in \argmin_{x\in\real^n} \left\{f(x) + \inner{p^k}{Mx}  + \frac{c_k}{2}\norm{Mx - y^{k,j}}^2  \right\}$} \label{step:minx}
        \State {$z^{k,j+1}=\argmin_{z\in \real^m} \left\{ g(z) -\inner{p^k}{z} + \frac{c_k}{2} \norm{Mx^{k,j+1} - z}^2 \right\}$} \label{step:minz}
        \State {$t_{k, j+1}=\tfrac{j+a-1}{a}$} \label{step:nextt} 
        \State {$y^{k, j+1}=z^{k,j+1}+\frac{t_{k,j}-1}{t_{k,j+1}}(z^{k,j+1}-z^{k,j})$}
               \label{step:calcy}
        \State {$s^{k,j+1} = c_k M\transpose(y^{k,j}-z^{k,j+1})$}
               \label{step:calcsjk}
        \State {$U_{k,j} = \norm{Mx^{k,j+1} -  z^{k,j+1}}^2$}
        \State {$S_{k,j} = \norm{s^{k,j+1}}^2$}
        \State {$A_{k,j} = \abs{(y^{k,j}-z^{k,j+1})\transpose M (x^{k,j+1} - w^k)}$}
        \State {$\Delta_{k,j}= (U_{k,j} - A_{k,j})^2 - \epsilon (U_{k,j}^2 + U_{k,j} S_{k,j})$}
        \Until{$A_{k,j}<U_{k,j}$ and $\Delta_{k,j} \geq 0$} \label{step:AAFCendInner}
        \State $x^{k+1}=x^{k,j+1} \quad z^{k+1}=z^{k,j+1} \quad s^{k+1}=s^{k,j+1}$
        \State $U_k = U_{k,j}$ \quad $S_k = S_{k,j}$ \quad $A_k = A_{k,j}$ 
                               \quad $\Delta_k = \Delta_{k,j}$ 
        \State $\gamma_k^{\min} = \max\left\{-1, \frac{A_k - U_k}{\sqrt{\Delta_k}}
                                           \right\}$
        \State Choose any $\gamma_k \in (\gamma_k^{\min},1]$
        \State $\rho_k = \frac{U_k - A_k + \gamma_k\sqrt{\Delta_k}}{U_k + S_k}$
        \State $w^{k+1} = w^{k} - \rho_k c_k s^{k+1}$
        \State $\bar p^k = p^k + c_k(Mx^{k+1} - z^{k+1})$
        \State $p^{k+1} = p^k + \rho_k c_k(Mx^{k+1}-z^{k+1})$
        \EndFor
    \end{algorithmic}
\end{algorithm}

\subsection{Convergence}
The convergence of the FISTA-Type algorithm relies strongly on the convergence
of the iterates of FISTA-CD as shown in~\cite{chambolle2015convergence} (in a
general Hilbert space, although here we only require convergence in the
special case of $\real^n$).  For completeness, we state their theorem here,
converted to the present application and notation:
\begin{theorem}
~\cite[Theorem 4.1]{chambolle2015convergence} Let $a>2$ be a positive real
number, and for all $j \in \mathbb{N}$ let $t_j=\frac{j+a-1}{a}$. 
If there exists at least a solution to the subproblem~\eqref{eq:defbal-prob}, 
then the sequence $\{z^{k,j}\}_{j=1}^{\infty}$ generated by the FISTA-CD 
algorithm converges to a minimizer of $\phi_k(z)=f_k(z) + g(z)$.
\label{thm:fista-cd-convergence}
\end{theorem}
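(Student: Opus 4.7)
The plan is to reproduce, in the present finite-dimensional setting, the proof of \cite[Theorem 4.1]{chambolle2015convergence}. Fix any minimizer $z^{*}$ of $\phi_k \doteq f_k + g$, which exists by hypothesis, and set $\phi_k^{*} \doteq \phi_k(z^{*})$. Recall that $\nabla f_k$ is $c_k$-Lipschitz. The argument splits into two parts: first a function-value $O(1/j^2)$ estimate of classical FISTA type, and then a promotion of function-value convergence to iterate convergence via Opial's lemma, for which the choice $a>2$ is essential.

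For the function-value part, I would begin with the standard descent inequality for the prox-gradient step at $y^{k,j}$: applying the descent lemma to $f_k$ and the variational characterization of $\prox_{(1/c_k)g}$ gives, for every $w \in \realm$,
\begin{equation*}
\phi_k(z^{k,j+1}) \leq \phi_k(w) + \tfrac{c_k}{2}\bigl( \norm{w - y^{k,j}}^{2} - \norm{w - z^{k,j+1}}^{2} \bigr).
\end{equation*}
Applying this at $w = z^{*}$ and at $w = z^{k,j}$, taking the convex combination with weights $1/t_{k,j+1}$ and $1 - 1/t_{k,j+1}$, multiplying through by $t_{k,j+1}^{2}$, and substituting the extrapolation~\eqref{eq:gen-fista-y} produces the FISTA energy inequality
\begin{equation*}
\mathcal{E}_{j+1} + c_k \eta_j \bigl( \phi_k(z^{k,j+1}) - \phi_k^{*} \bigr) \leq \mathcal{E}_j,
\end{equation*}
where $\mathcal{E}_j \doteq t_{k,j}^{2}(\phi_k(z^{k,j}) - \phi_k^{*}) + (c_k/2)\smallnorm{u^{k,j}}^{2}$ with $u^{k,j} \doteq t_{k,j} z^{k,j+1} - (t_{k,j}-1) z^{k,j} - z^{*}$, and $\eta_j \doteq t_{k,j}^{2} - t_{k,j+1}^{2} + t_{k,j+1}$. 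A direct calculation with $t_{k,j} = (j+a-1)/a$ shows that $\eta_j = \bigl((a-2)/a^{2}\bigr)(j + \text{const})$ is strictly positive and of order $j$ precisely when $a > 2$. Telescoping the energy inequality therefore gives the standard $\phi_k(z^{k,j}) - \phi_k^{*} = O(1/j^{2})$ bound, and additionally, since $\eta_j$ grows linearly, the strengthened summability
\begin{equation*}
\sum_{j=1}^{\infty} j \bigl( \phi_k(z^{k,j+1}) - \phi_k^{*} \bigr) < \infty.
\end{equation*}

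For iterate convergence, I would apply Opial's lemma, whose two hypotheses are: (a) every accumulation point of $\{z^{k,j}\}$ is a minimizer of $\phi_k$, and (b) for every minimizer $\hat z$ the limit $\lim_{j} \smallnorm{z^{k,j} - \hat z}$ exists. Hypothesis (a) is immediate from function-value convergence and lower semicontinuity of $\phi_k$. For (b), I would derive a quasi-Fej\'er inequality by expanding $\smallnorm{z^{k,j+1} - \hat z}^{2}$ through the extrapolation $y^{k,j} = z^{k,j} + \tfrac{t_{k,j-1}-1}{t_{k,j}}(z^{k,j} - z^{k,j-1})$ and invoking the descent inequality at $w = \hat z$. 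The resulting recursion has the form $a_{j+1} - a_j \leq b_j - b_{j-1} + r_j$ with $a_j = \smallnorm{z^{k,j} - \hat z}^{2}$, a telescoping term $b_j$, and a remainder $r_j$ bounded by a constant times $j(\phi_k(z^{k,j+1}) - \phi_k^{*})$, hence summable by the previous paragraph. A standard lemma on quasi-Fej\'er sequences (e.g., \cite[Lemma 2]{chambolle2015convergence}) then yields existence of $\lim_j \smallnorm{z^{k,j} - \hat z}$, and Opial's lemma forces uniqueness of the accumulation point.

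The main obstacle is the derivation of the quasi-Fej\'er inequality for (b). The classical FISTA argument yields function-value convergence but fails here because the remainder $r_j$ is, under the original choice of $t_j$, only summable after multiplication by $1$, not by $j$. The Chambolle--Dossal choice $a > 2$ is precisely what upgrades the slack $\eta_j$ from $O(1)$ to $\Theta(j)$ and thus makes the remainder in (b) summable. Carefully identifying which cross-terms in the expansion of $\smallnorm{z^{k,j+1} - \hat z}^{2}$ telescope and which must be absorbed into $r_j$ is the bookkeeping-heavy step, and this is where I expect to invest most of the technical effort.
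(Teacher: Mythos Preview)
The paper does not prove this statement at all: Theorem~\ref{thm:fista-cd-convergence} is quoted verbatim from \cite[Theorem~4.1]{chambolle2015convergence} and used as a black box, with no argument supplied. Your proposal, by contrast, sketches the actual Chambolle--Dossal proof, and the sketch is accurate: the FISTA energy inequality, the computation showing the slack $\eta_j = t_{k,j}^2 - t_{k,j+1}^2 + t_{k,j+1}$ is $\Theta(j)$ precisely when $a>2$, the resulting summability $\sum_j j\bigl(\phi_k(z^{k,j+1}) - \phi_k^*\bigr) < \infty$, and the passage to iterate convergence via a quasi-Fej\'er estimate and Opial's lemma are all the correct ingredients from the cited source.

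So there is no discrepancy in approach to flag --- you are simply doing more than the paper does. If your goal is to match the paper, a one-line citation suffices; if your goal is a self-contained manuscript, your outline is sound, and the only real labor (as you correctly anticipate) is the careful expansion of $\smallnorm{z^{k,j+1}-\hat z}^2$ to isolate the telescoping piece from the summable remainder.
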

Using this result, it is now possible to prove the convergence of 
ALM-AR-FISTA-CD algorithm:

\begin{theorem}\label{thm:ALM-AR-FISTA-CD-convergence}
The ALM-AR-FISTA-CD algorithm described in Algorithm~\ref{alg:ALM-AR-FISTA-CD}
has the following behavior:
\begin{enumerate}[label=(\roman*), nosep]
    \item If every instance of the inner loop (over $j$) terminates finitely
    and a saddle point of $L$ defined in~\eqref{eq:conjugate-lagrange} exists,
    then the algorithm's outer loop (over $k$) produces sequences
    $\{x^k\}$, $\{z^k\}$, and $\{p^k\}$ such that $\{p^k\}$ converges to a
    solution of the dual problem $\min_{\in\real^m} f^*(-M\transpose p) +
    g^*(p)$, while $F\big( (x^k, z^k), z^k-Mx^k \big)$ converges to the
    optimal value of the primal problem~\eqref{eq:conjugate-primal} and any
    limit points $(x^\infty,z^\infty)$ (if they exist) of the $\big\{(x^k,
    z^k)\big\}$ sequence are such that $x^\infty$ solves the primal
    problem~\eqref{eq:conjugate-primal} and $z^\infty = Mx^\infty$.

    \item If every instance of the inner loop terminated finitely and a saddle
    point of $L$ does not exist, then at least one of the sequences $\{x^k\}$,
    $\{z^k\}$, or $\{p^k\}$ generated by the outer loop is unbounded.

    \item If for some value $k$ of the outer loop index, the inner loop runs
    indefinitely, then $p^k$ is an optimal solution of the dual problem and
    the sequences $\{x^{k,j}\}_{j=0}^\infty$ and $\{z^{k,j}\}_{j=0}^\infty$
    generated by the inner loop are asymptotically optimal in the sense that
    \begin{align*}
    \limsup_{j\rightarrow\infty} f(x^{k,j})+g(z^{k,j}) 
         &\leq \inf_{x\in\real^n}\big\{f(x)+g(Mx)\big\} &
    \lim_{j\rightarrow\infty} \norm{Mx^{k,j}-z^{k,j}} &= 0.
    \end{align*}
\end{enumerate}
\end{theorem}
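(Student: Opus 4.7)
The plan is to show that Algorithm~\ref{alg:ALM-AR-FISTA-CD} is a concrete instance of the abstract Algorithm~\ref{alg:gen-relax} applied to the Fenchel-Rockafellar parameterization $F$ of~\eqref{eq:sum-F}, and then deduce the three cases of the theorem from Proposition~\ref{prop:fullAbstract} together with the translation dictionary developed in Section~\ref{sec:sum-of-two-convex}. The outer-loop updates of Algorithm~\ref{alg:ALM-AR-FISTA-CD} match those of Algorithm~\ref{alg:gen-relax} by direct inspection, so the substantive task is to verify that the FISTA-CD inner loop, coupled with the subgradient choice $s^{k,j+1} = \parent{c_k M\transpose(y^{k,j} - z^{k,j+1}),\, 0}$ derived in~\eqref{simplifiedSubgrad}, constitutes a compatible subproblem process for $(F, p^k, c_k)$ in the sense of Definition~\ref{def:subSolver}.

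The subgradient inclusion~\eqref{CSPsubgrad} is exactly~\eqref{eq:subgradient-lagrange-kj0}, which already incorporates the optimality conditions for the $x$- and $z$-minimizations in steps~\ref{step:minx} and~\ref{step:minz}. What remains is to verify the limit conditions $s^{k,j+1} \to 0$ and $\inner{(x^{k,j+1}, z^{k,j+1})}{s^{k,j+1}} \to 0$. For the former, I would invoke Theorem~\ref{thm:fista-cd-convergence} to obtain $z^{k,j} \to z^*$ for some minimizer $z^*$ of $\phi_k = f_k + g$; the recursion $y^{k,j+1} = z^{k,j+1} + \frac{t_{k,j}-1}{t_{k,j+1}}(z^{k,j+1} - z^{k,j})$ together with $z^{k,j+1} - z^{k,j} \to 0$ and the boundedness of $(t_{k,j}-1)/t_{k,j+1}$ then forces $y^{k,j} \to z^*$ as well, so $s^{k,j+1} = c_k M\transpose(y^{k,j} - z^{k,j+1}) \to 0$. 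For the inner-product limit, since $s_z^{k,j+1} = 0$, the quantity reduces to $c_k \inner{Mx^{k,j+1}}{y^{k,j} - z^{k,j+1}}$, so it suffices that $\{Mx^{k,j+1}\}$ be bounded; Lemma~\ref{lemma:derivative-fk} gives $Mx^{k,j+1} = y^{k,j} - (1/c_k)\parent{p^k + \nabla f_k(y^{k,j})}$, so Lipschitz continuity of $\nabla f_k$ combined with $y^{k,j} \to z^*$ yields convergence (and hence boundedness) of $\{Mx^{k,j+1}\}$.

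With compatibility established, Proposition~\ref{prop:fullAbstract} yields the required trichotomy, and the three cases of the theorem arise by translating its conclusions through the identities $F\parent{(x, z), u} = f(x) + g(z)$ whenever $Mx - z + u = 0$, with $u^{k,j} = z^{k,j} - Mx^{k,j}$ the corresponding slack. Case~(iii) follows directly from Proposition~\ref{prop:fullAbstract}\ref{item:stuckInner}. Case~(ii) follows from Proposition~\ref{prop:fullAbstract}\ref{item:notstuck}, using $p^k - \bar p^k = c_k u^{k+1} \to 0$ (from the proof of Proposition~\ref{prop:convergence}) to transfer unboundedness between $\{\bar p^k\}$ and $\{p^k\}$. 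Case~(i) is assembled from Proposition~\ref{prop:convergence}: primal value convergence uses $f(x^k) + g(z^k) = F((x^k,z^k), u^k)$; dual optimality of $\{p^k\}$ accumulation points is immediate; and primal optimality of a limit point $(x^\infty, z^\infty)$ of $\{(x^k, z^k)\}$ forces $F((x^\infty, z^\infty), 0) < \infty$ and hence $z^\infty = Mx^\infty$ with $x^\infty$ solving~\eqref{eq:primal-problem}. The main obstacle is upgrading the ``accumulation points are dual optimal'' conclusion of Proposition~\ref{prop:convergence} to the full convergence of $\{p^k\}$ asserted in case~(i); I would handle this via a standard Opial-type argument on the Fej\'er monotone sequence $\{(p^k, w^k)\}$ from Lemma~\ref{lemma:shrinkage}, exploiting the limits $s^k, u^k \to 0$ and the subgradient inclusion~\eqref{eq:saddle-point-l} to show that every accumulation point of $(p^k, w^k)$ corresponds to a saddle point of $L$, thereby forcing uniqueness of the accumulation point.
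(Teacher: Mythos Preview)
Your overall strategy---verify that the FISTA-CD inner loop is a compatible subproblem process for $(F,p^k,c_k)$ and then invoke Proposition~\ref{prop:fullAbstract}---is exactly what the paper does in Appendix~F. Your verification of~\eqref{CSPconverge}--\eqref{CSPsubgrad} via Theorem~\ref{thm:fista-cd-convergence}, the boundedness of the momentum coefficients, and Lemma~\ref{lemma:derivative-fk} mirrors the paper's argument step for step; the paper then simply says the conclusions ``follow immediately'' without spelling out the translation you give for cases~(i)--(iii).

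Two caveats on the parts where you go beyond the paper. First, in case~(ii) you invoke $p^k - \bar p^k = c_k u^{k+1}\to 0$ ``from the proof of Proposition~\ref{prop:convergence}'', but that proposition assumes a saddle point exists, which is precisely what case~(ii) denies. The correct transfer is direct: if $\{(x^k,z^k)\}$ is bounded then $\bar p^k - p^k = c_k(Mx^{k+1}-z^{k+1})$ is bounded (provided $\{c_k\}$ is), so unboundedness of $\{\bar p^k\}$ forces unboundedness of $\{p^k\}$; otherwise one of $\{x^k\}$, $\{z^k\}$ is already unbounded. Second, you correctly notice that Proposition~\ref{prop:convergence} only asserts that \emph{accumulation points} of $\{p^k\}$ are dual optimal, whereas the theorem claims full convergence---a point the paper does not address. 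Your Opial-type plan is the right instinct, but be careful: the Fej\'er monotonicity from Lemma~\ref{lemma:shrinkage} is for the pair $(p^k,w^k)$ with respect to pairs $(p^*,x^*)$ coming from saddle points, and the inclusion~\eqref{eq:saddle-point-l} links $(s^{k+1},u^{k+1})$ to $(x^{k+1},\bar p^k)$, not to $w^k$. You would need to argue separately that limit points of $\{w^k\}$ are primal optimal (or otherwise lie in the attracting set) before the standard Opial argument closes.
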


\begin{proof}
See~\ref{app:ALM-AR-FISTA-CD-convergence-proof}. 
\end{proof}

\section{Numerical Experiments with LASSO Regression}
\label{sec:numerical-experiments}

A simple problem class that fits the form~\eqref{eq:primal-problem} with $g$
having an easily computed proximal map is LASSO regression problem, which may
be written as
\begin{equation}
    \min_{x\in\mathbb{R}^n} \tfrac{1}{2}\norm{Ax-b}^2 + \nu\norm{x}_1,
\end{equation}
where $A$ is a $p \times n$ matrix, $b\in \real^p$ and $\nu>0$ is a
regularization parameter. This problem readily fits the
form~\eqref{eq:primal-problem} with $f(x) = \tfrac{1}{2}\norm{Ax-b}^2$, $g(x)
= \nu\norm{x}_1$, and $M = I$. The $f$ subproblem reduces to solving a system
of linear equations, and the $g$ subproblem reduces to the soft-thresholding
operator.  For each problem instance we tested, we scaled $b$ and the columns
of $A$ to have unit $\ell_2$ norm, and set the $\nu$ regularization parameter
to $0.1\norm{A\transpose b}_\infty$.

Our experiments compare five algorithms:
\begin{description}
  \item[ALM-ADSS] The augmented Lagrangian outer loop fixes $\rho_k \equiv 1$,
  and the inner loop is alternating minimization, shown in
  Section~\ref{sec:connection-between-admm} to be equivalent to applying the
  proximal gradient method to a dual formulation of the subproblem, using a
  constant stepsize of $1/c_k$.  The acceptance criterion to end the inner
  loop is~\eqref{eq:sum-frame-approx} with $\rho_k$ fixed to $1$, which is equivalent to the acceptance condition in~\cite{eckstein2013practical}.
  \item[ALM-AR-ADSS] The augmented Lagrangian outer loop uses adaptive
  relaxation, as Algorithm~\ref{alg:gen-relax}, while the inner loop is
  alternating relaxation.
  \item[ALM-FISTA-CD] The augmented Lagrangian outer loop fixes $\rho_k \equiv
  1$, while the inner loop is the FISTA-CD method, essentially
  steps~\ref{step:minx}-\ref{step:calcsjk} of
  Algorithm~\ref{alg:ALM-AR-FISTA-CD}.  Again, the acceptance criterion to end
  the inner loop is~\eqref{eq:sum-frame-approx} with $\rho_k$ fixed to $1$.
  \item[ALM-AR-FISTA-CD] Algorithm~\ref{alg:ALM-AR-FISTA-CD}, which uses both adaptive relaxation and the FISTA-CD subproblem solver.
  \item[ADMM] The traditional ADMM.
\end{description}

The termination condition for all the algorithms was
\begin{equation*}
    \dist_\infty\!\big(0, \partial[f(x) + g(Mx)]_{x=x^k}\big) \leq \delta, 
\end{equation*}
where $\dist_\infty(t, S)=\inf\big\{\norm{t-s}_\infty \big|\; s\in S \big\}$
and $\delta$ is a tolerance parameter set to be $10^{-6}$. 

We tested three categories of LASSO datasets:
\begin{description}
\item[Single-Pixel Camera:] (``Pixel'') Four dense compressed image sensing datasets 
from~\cite{duarte2008single}, with $m\in[410, 4770]$ and $n\in[1024, 16384]$. 
These problems are referred as \textit{Mug32}, \textit{Ball64}, \textit{Logo64} 
and \textit{Mug128}.

\item[Gene expression:] (``Gene'') Six standard cancer DNA microarray datasets 
from~\cite{dettling2004finding}. These instances have dense, wide and relatively
small matrices $A$, with $m\in[42, 102]$ and $n\in[2000, 6033]$. These problems
are referred as \textit{Brain}, \textit{Colon}, \textit{Leukemia}, 
\textit{Lymphoma}, \textit{Prostate} and \textit{SRBCT}.

\item[Jet Engine Reliability:] (``Engine'') Four multivariate jet engine
reliability time series datasets from~\cite{saxena2008damage}, with
$m\in[20631, 61249]$ and $n=24$. These problems are referred as
\textit{Fd001}, \textit{Fd002}, \textit{Fd003} and \textit{Fd004}.
\end{description}
Table~\ref{tab:lasso-params} shows the dimensions of the test instances.

\begin{table}[h!]
    \caption{LASSO regression test problem parameters.}
    \label{tab:lasso-params}
    \centering
    \begin{tabular}{|l|l|r|r|}
        \hline
        \textbf{Category} &
        \textbf{Instance} & \textbf{Observations} & \textbf{Features}
        \\ \hline \hline
        Pixel 
        & Mug32   & 410          & 1,024     \\ \hline
        & Ball64  & 1,638        & 4,096     \\ \hline
        & Logo64  & 1,638        & 4,096     \\ \hline
        & Mug128  & 4,770        & 16,384    \\ \hline \hline
        Gene 
        & Brain   & 42           & 5,597     \\ \hline
        & Colon   & 62           & 2,000     \\ \hline
        & Leukemia& 72           & 3,571     \\ \hline
        & Lymphoma& 62           & 4,026     \\ \hline
        & Prostate& 102          & 6,033     \\ \hline
        & SRBCT   & 63           & 2,308     \\ \hline \hline
        Engine 
        & Fd001   & 20,631       & 24        \\ \hline
        & Fd002   & 53,759       & 24        \\ \hline
        & Fd003   & 24,720       & 24        \\ \hline
        & Fd004   & 61,249       & 24        \\ \hline \hline
    \end{tabular}
    \vspace{2ex}
\end{table}

After a modest amount of experimentation, we set the algorithm parameters as follows, attempting to find good values for each individual algorithm:
\begin{itemize}[nosep]
  \item $\epsilon=0.1$
  \item $\gamma_k$ is always chosen as large as possible, so that $\rho_k$ is always as large as possible
  \item For the FISTA-CD inner loop, $a=3$
  \item $c_k$ was optimized by algorithm and test instance category (but not
  by individual instance), as shown in Table~\ref{tab:lasso-stepsize}.
  \item For the first $J_1$ iterations of the inner loop of the adaptive
  relaxation algorithms, we impose a stronger termination condition that it
  should be possible to set $\rho_k \geq 1$.  After some algebraic
  manipulations, this change is equivalent to strengthening the condition
  $\Delta_{k,j} \geq 0$ in step~\ref{step:AAFCendInner} of
  Algorithm~\ref{alg:ALM-AR-FISTA-CD} to $\Delta_{k,j} \geq (A_{k,j} +
  S_{k,j})^2$.  Once $j > J_1$, this condition is weakened to $\Delta_{k,j}
  \geq 0$ and underrelaxed steps are accepted.  The rationale for this
  heuristic is that for subproblems that appear relatively easy to solve, it
  may be worth prolonging the inner loop in the hope of finding a relatively
  accurate subproblems solution which will in turn lead to a relatively large
  multiplier adjustment.  The parameter $J_1$ was set by problem category as
  shown in Table~\ref{tab:lasso-lower-bound-threshold}.
  \item In the adaptive relaxation algorithms, when the number of inner iterations in the  outer iteration exceeded the threshold $J_{\mathrm{r}}$ given in 
  Table~\ref{tab:lasso-reset-threshold}, we reset $w^{k+1}$ to $x^{k+1}$.  
  This heuristic prevents the $\{x^k\}$ and $\{w^k\}$ 
  sequences from differing excessively from one another, a phenomenon that 
  can cause the termination condition for the inner
  loop to become too stringent in practice.
\end{itemize}

\begin{table}[h!]
\caption{Stepsize $c_k$ setup}
\label{tab:lasso-stepsize}
\centering
\begin{tabular}{|l|r|r|r|r|r|}
    \hline
    \textbf{Instance} & & \multicolumn{4}{|c|}{\textbf{ALM-}} \\
    \textbf{Category} & \textbf{ADMM} & \textbf{FISTA-CD} 
    & \textbf{AR-FISTA-CD} & \textbf{ADSS} & \textbf{AR-ADSS} \\
    \hline
    Pixel  & 2    & 3        & 3           & 2      & 2       \\ \hline
    Cancer & 2    & 4        & 4           & 3      & 7       \\ \hline
    Engine & 0.01 & 0.007    & 0.009       & 0.0007 & 0.0006  \\ \hline
\end{tabular}
\vspace{2ex}
\end{table}

\begin{table}[h!]
\caption{Setup of the overrelaxation threshold parameter $J_1$}
\label{tab:lasso-lower-bound-threshold}
\centering
\begin{tabular}{|l|r|r|}
    \hline
    \textbf{Instance} & \multicolumn{2}{|c|}{\textbf{ALM-}} \\
    \textbf{Category} & \textbf{AR-FISTA-CD} & \textbf{AR-ADSS} \\
    \hline
    Pixel  & 2      & 1       \\ \hline
    Cancer & 6      & 1       \\ \hline
    Engine & 6      & 1       \\ \hline
\end{tabular}
\vspace{2ex}
\end{table}

\begin{table}[h!]
\caption{Reset threshold $J_{\mathrm{r}}$ setup}
\label{tab:lasso-reset-threshold}
\centering
\begin{tabular}{|l|r|r|r|r|}
    \hline
    \textbf{Instance} & \multicolumn{4}{|c|}{\textbf{ALM-}} \\
    \textbf{Category} & \textbf{FISTA-CD} & \textbf{AR-FISTA-CD} & \textbf{ADSS} & \textbf{AR-ADSS} \\
    \hline
    Pixel  & 3        & 4           & 4      & 1       \\ \hline
    Cancer & 3        & 2           & 10     & 1       \\ \hline
    Engine & 10       & 7           & 10     & 1       \\ \hline
\end{tabular}
\vspace{2ex}
\end{table}

The initial values of $p^0$, $w^0$, and $z^0$ were zero. All the implementations
were in Python, using the NumPy package.

\begin{table}[p]
\caption{Outer iterations for LASSO problems.}
\label{tab:numer-lasso-outer}
\centering
\begin{tabular}{|l|r|r|r|r|r|}
    \hline
    & & \multicolumn{4}{|c|}{\textbf{ALM-}} \\
    \textbf{Instance} & \textbf{ADMM} & \textbf{FISTA-CD} 
        & \textbf{AR-FISTA-CD} & \textbf{ADSS} & \textbf{AR-ADSS} 
        \\ \hline \hline
    Mug32    & 314    & 15     & 25     & 7       & 28      \\ \hline
    Ball64   & 259    & 90     & 51     & 29      & 81      \\ \hline
    Logo64   & 264    & 87     & 55     & 29      & 82      \\ \hline
    Mug128   & 551    & 314    & 175    & 96      & 250     \\ \hline
    Colon    & 665    & 158    & 136    & 67      & 152     \\ \hline
    Prostate & 1,344  & 690    & 551    & 275     & 632     \\ \hline
    Brain    & 2,045  & 1,050  & 828    & 415     & 833     \\ \hline
    Leukemia & 612    & 233    & 205    & 104     & 261     \\ \hline
    Lymphoma & 822    & 343    & 280    & 151     & 405     \\ \hline
    SRBCT    & 615    & 113    & 104    & 45      & 232     \\ \hline
    Fd001    & 1,116  & 272    & 154    & 3,368   & 2,062   \\ \hline
    Fd002    & 1,056  & 958    & 549    & 13,219  & 8,819   \\ \hline
    Fd003    & 945    & 334    & 245    & 3,338   & 2,087   \\ \hline
    Fd004    & 1,188  & 701    & 362    & 6,313   & 3,828   \\ \hline \hline
    \textbf{Geometric Mean} & 713.44 & 241.73 & 178.78 & 245.36 & 448.33  \\ \hline
\end{tabular}
\end{table}

Table~\ref{tab:numer-lasso-outer} show the number of outer iterations (number
of multiplier updates) required to solve the LASSO regression instances. The
ALM methods generally solve problems with significantly fewer outer iterations
than that ADMM, in accord with earlier results such as those in~\cite{PJO}.

\begin{table}[p]
\caption{Cumulative inner iterations for LASSO problems.}
\label{tab:numer-lasso-inner}
\centering
\begin{tabular}{|l|r|r|r|r|r|}
    \hline
    & & \multicolumn{4}{|c|}{\textbf{ALM-}} \\
    \textbf{Instance} & \textbf{ADMM} & \textbf{FISTA-CD} & 
        \textbf{AR-FISTA-CD} & \textbf{ADSS} & \textbf{AR-ADSS} 
        \\ \hline \hline
    Mug32    & 314    & 298    & 181    & 544     & 374     \\ \hline
    Ball64   & 259    & 196    & 108    & 398     & 297     \\ \hline
    Logo64   & 264    & 159    & 127    & 355     & 350     \\ \hline
    Mug128   & 551    & 421    & 241    & 666     & 368     \\ \hline
    Colon    & 665    & 712    & 531    & 3,133   & 2,439   \\ \hline
    Prostate & 1,344  & 1,544  & 1,401  & 6,894   & 4,507   \\ \hline
    Brain    & 2,045  & 2,324  & 1,919  & 9,659   & 5,859   \\ \hline
    Leukemia & 612    & 1,047  & 929    & 2,375   & 2,213   \\ \hline
    Lymphoma & 822    & 1,289  & 893    & 3,757   & 3,582   \\ \hline
    SRBCT    & 615    & 625    & 612    & 6,790   & 6,267   \\ \hline
    Fd001    & 1,116  & 602    & 348    & 3,794   & 2,515   \\ \hline
    Fd002    & 1,056  & 1,859  & 750    & 20,573  & 8,958   \\ \hline
    Fd003    & 945    & 443    & 671    & 3,585   & 2,360   \\ \hline
    Fd004    & 1,188  & 1,195  & 978    & 9,059   & 4,007   \\ \hline \hline
    \textbf{Geometric Mean} & 713.44 & 684.84 & 507.06 & 2,779.20 & 1,927.24  \\ \hline
\end{tabular}
\end{table}

Table~\ref{tab:numer-lasso-inner} shows the cumulative number of inner
iterations (number of $x$ or $z$ updates) required to solve the LASSO
regression problems  (the cumulative number of inner iterations is closely
correlated with run time for all the algorithms).  Using FISTA-CD as the inner
loop significantly improve the performance of the ALM-ADSS algorithm. Adaptive
relaxation also generally improves performance. Used together, these two
techniques improve ALM-ADSS to a performance close to or even better than that
of the ADMM across three different datasets. Across the datasets tested,
combining the FISTA-CD inner loop technique with adaptive relaxation results
in a 29\% reduction in the geometric mean of the number of total inner
iterations in comparison to the ADMM, albeit at the cost of a significantly
more elaborate algorithm.

\bibliographystyle{abbrv}
\bibliography{references}

\appendix
\renewcommand{\thesection}{Appendix \Alph{section}}
\section{Proof of Proposition~\ref{prop:unbounded}}\label{app:unbounded-proof}
\begin{proof}
Since $L$ is maximal monotone, then by~\cite{Minty61} or~\cite[Theorem
12.41]{rockafellar1970convex} its domain $\dom L$ is a nonempty nearly convex
set, meaning that there exists a convex set $X$ such that $\dom L
\subseteq X \subseteq \closure \dom L$.  Using~\cite[Lemma 2.7]{BMW13}, one
then has $\ri \dom L = \ri X$, which is necessarily nonempty since $X$ is
nonempty convex and hence must have a nonempty relative interior.  Let
$(\tilde x, \tilde p)$ be an arbitrary element of the nonempty set $\ri \dom
L$.

As in~\cite{eckstein2013practical}, the analysis will use a variant of the
analysis originally presented in~\cite{Roc76a} for the behavior of the
proximal point algorithm for operators with no roots. This analysis proceeds
by contradiction. Suppose that no saddle point exists, but that both
$\{x^k\}$ and $\{\bar p^k\}$ are bounded. In this case, there must exist
some positive scalar $R\in\real$ such that $\sup_{k\geq 0}\{
\smallnorm{(x^{k+1}, \bar p^k)} \} < R$.  Furthermore, take $R$ large enough
that $R > \smallnorm{(\tilde x, \tilde p)}$. Let $B_R = \{ (x, p) \in
\realn\times\realm:\smallnorm{(x, p)} \leq R \}$ be the closed ball of radius
$R$ centered at the origin, and consider the point-to-set mapping $T =
\partial L + N_{B_R}$, where $N_{B_R}$ is the normal cone of $B_R$.

Now, since $R > \smallnorm{(\tilde x, \tilde p)}$, we have 
\begin{equation*}
(\tilde x, \tilde p) \in \ri \dom N_{B_R} 
   = \{ (x, p) \in \realn\times\realm:\smallnorm{(x, p)} < R \}.
\end{equation*}
Since $(\tilde x, \tilde p) \in \ri \dom L$, it follows that $(\tilde x,
\tilde p) \in \ri \dom L \cap \ri \dom N_{B_R}$, establishing that $\ri \dom L
\cap \ri \dom N_{B_R} \neq \emptyset$. Then, using~\cite[Theorem
12.44]{rockafellar1970convex}, it follows that $T =
\partial L + N_{B_R}$ is maximal monotone. Furthermore, since $\dom T$ lies within
the bounded set $B_R$, it follows from~\cite[Proposition
2]{rockafellar1969local} that there must exist at least one point $(x^*,
p^*)\in \realm \times \realm$ such that $(0, 0) \in T(x^*, p^*)$. Since the
sequence $\brac{(x^{k+1}, \bar p^k)}$ lies within the interior of $B_R$, it
follows that $N_{B_R}(x^{k+1}, \bar p^k) = \{0\}$ for all $k\geq 0$. Then we
have for all $k\geq 0$ that
\begin{equation*}
    T(x^{k+1}, \bar p^k) = \partial L(x^{k+1}, \bar p^k) + N_{B_R}(x^{k+1}, \bar p^k) = \partial L(x^{k+1}, \bar p^k) \Rightarrow (s^{k+1}, u^{k+1}) \in T(x^{k+1}, \bar p^k).
\end{equation*}
Using the monotonicity of $T$, one has for all $k\geq 0$ that 
\begin{equation*}
    \inner{x^{k+1} - x^*}{s^{k+1}-0} + \inner{\bar{p}^k - p^*}{u^{k+1}-0} \geq 0,
\end{equation*}
which takes the same form as~\eqref{eq:monotonicity}, the only
difference being that $(x^*,p^*)$ is taken to be a root of $T$ instead of $L$.
Combining the above inequality with~\eqref{eq:shrink-pw-eq-adaptive}, and
following the same logic as the case in which a saddle points exist, we still
obtains all the results above from~\eqref{eq:shrink-pw-almost-adaptive}
through~\eqref{eq:decompose-inner}, but with the slightly different assumption
regarding $(x^*,p^*)$.  The deductions that $c_k u^{k+1} \to 0$, $u^{k+1} \to 0$,
$\inner{s^{k+1}}{x^{k+1}} \to 0$, and $s^k \to 0$ from this sequence of
deductions remain valid.

From the boundedness of $\brac{(x^{k+1}, \bar p^k)}$, it must have at least
one limit point $(x^{\infty}, p^{\infty})$, which by the construction of $R$
must lie within the interior of $B_R$. Taking the limit of $(s^{k+1}, u^{k+1})
\in T(x^{k+1}, \bar p^k)$ over some converging subsequence $\mathcal{K}$ and
using the maximality of $T$, we have $(0,0) \in T(x^{\infty}, p^{\infty})$.
Since $(x^{\infty}, p^{\infty})$ lies in the interior of $B_R$, we have
$N_{B_R}(x^{\infty}, p^{\infty}) = \{0\}$ and so
$L(x^{\infty}, p^{\infty}) = T(x^{\infty}, p^{\infty})$, which yields $(0, 0)
\in \partial L(x^{\infty}, p^{\infty})$, meaning $(x^{\infty}, p^{\infty})$ is
a saddle point of $L$. This contradicts the assumption that no saddle point
exists. Therefore, in the case that no saddle point exists, at least one of
the sequences $\{x^k\}$ or $\{\bar p^k\}$ must be unbounded.
\end{proof}

\section{Proof of Lemma~\ref{lem:genericInnerLoop}}
\label{app:inner-loop-proof}
\begin{proof}
Let $k$ be as specified, with the corresponding inner loop over $j$ never
terminating. Since $\lim_{j\to\infty} s^{k,j} = 0$ from the compatible
subproblem process definition, it follows that $\lim_{j\to\infty} S_{k,j} =
\lim_{j\to\infty} \smallnorm{s^{k,j}}^2 = 0$. Furthermore, 
$\big\{(x^{k,j},u^{k,j},s^{k,j})\big\}_{j=0}^\infty$ being
a compatible subproblem process also means that $\lim_{j\to\infty}
\inner{s^{k,j}}{x^{k,j}} = 0$, so by the calculation of $A_{k,j}$ in
step~\ref{step:calcAkj} of the algorithm,
\begin{align*}
\lim_{j\to\infty} A_{k,j}
&= \lim_{j\to\infty} \left\{ \frac{1}{c_k}\abs{\inner{x^{k,j}-w^k}{s^{k,j}}} \right\} \\
&= \frac{1}{c_k} \lim_{j\to\infty}  \abs{\inner{x^{k,j}}{s^{k,j}}
                                                 - \inner{w^k}{s^{k,j}}} \\
&= \frac{1}{c_k} \abs{\lim_{j\to\infty} \left\{ \inner{x^{k,j}}{s^{k,j}}
                                                 - \inner{w^k}{s^{k,j}} \right\}} \\
&= \frac{1}{c_k} \abs{\lim_{j\to\infty} \inner{x^{k,j}}{s^{k,j}}
                                            - \lim_{j\to\infty} \inner{w^k}{s^{k,j}} } \\
&= \frac{1}{c_k}\abs{0 - 0} = 0.
\end{align*}
It remains only to show that $\lim_{j\to\infty} U_{k,j} = 0$.
To that end, define
\begin{align*}
\mathcal{J}_1 &\doteq \set{j\in\nat}{A_{k,j} \geq U_{k,j}} &
\mathcal{J}_2 &\doteq \set{j\in\nat}{\Delta_{k,j} < 0}.
\end{align*}
Having the inner loop for outer step $k$ of the algorithm fail to terminate is
equivalent to $\mathcal{J}_1 \cup \mathcal{J}_2 = \nat$,
that is, every $j\in\nat$ being a member of at least one of $\mathcal{J}_1$,
$\mathcal{J}_2$, or both.  Thus, at least one of these index
sets must be infinitely large.  

Suppose that $\card{\mathcal{J}_1} = \infty$.  Then, considering that
$A_{k,j} \geq U_{k,j}$ for all $j\in\mathcal{J}_1$, that $U_{k,j}$ cannot be negative for any $j$, and that $\lim_{j\to\infty} A_{k,j} = 0$, it follows that
$\lim_{j\to\infty,j\in\mathcal{J}_1} U_{k,j} = 0$.

We next claim that if
$\card{\mathcal{J}_2} = \infty$, then $\lim_{j\to\infty,j\in\mathcal{J}_2}
U_{k,j} = 0$.  This is the most involved element of the proof and proceeds by
contradiction.  Suppose in this case that it is not true that
$\lim_{j\to\infty,j\in\mathcal{J}_2} U_{k,j} = 0$.  Since $U_{k,j}$ cannot be
negative for any $j$ by definition, it then follows that there must exist some
$\delta > 0$ and infinitely large $\mathcal{J}_2' \subseteq \mathcal{J}_2$
such that $U_{k,j} > \delta$ for all $j\in\mathcal{J}_2'$.  For all $j \geq
0$, also observe that
\begin{align}
\Delta_{k,j} &= (U_{k,j} - A_{k,j})^2 - \epsilon U_{k,j} (U_{k,j} + S_{k,j}) 
 \nonumber \\
&= U_{k,j}^2 - 2 A_{k,j} U_{k,j} + A_{k,j}^2 
          - \epsilon U_{k,j}^2 - \epsilon S_{k,j} U_{k,j} \nonumber \\
&= (1-\epsilon) U_{k,j}^2 - (2 A_{k,j} + \epsilon S_{k,j}) U_{k,j} + A_{k,j}^2 
 \nonumber \\
&= U_{k,j} \big( (1-\epsilon)U_{k,j} - 2A_{k,j} - \epsilon S_{k,j}\big) + A_{k,j}^2 
 \nonumber \\
&\geq U_{k,j} \big( (1-\epsilon)U_{k,j} - (2A_{k,j} + \epsilon S_{k,j}) \big), 
  \label{UkjLowerBound}
\end{align}
Now, since $\lim_{k\to\infty} A_{k,j} = 0$ and $\lim_{k\to\infty} S_{k,j} =
0$, it follows that $\lim_{k\to\infty} \{ 2A_{k,j} + \epsilon S_{k,j} \} = 0$.
So, there must exist some $j_0 \in \nat$ such that $2A_{k,j} + \epsilon
S_{k,j} < (1-\epsilon)\delta/2$ for all $j \geq j_0$. Since $U_{k,j} >
\delta$ for all $j\in\mathcal{J}_2'$, it then follows that
\begin{align*}
(\forall\, j \in \mathcal{J}_2' : j \geq j_0) \qquad
(1-\epsilon) U_{k,j} - (2A_{k,j} + \epsilon S_{k,j})
&> (1-\epsilon) \delta - \frac{(1-\epsilon)\delta}{2}
= \frac{(1-\epsilon)\delta}{2} > 0.
\end{align*}
Substituting this inequality into~\eqref{UkjLowerBound} and using once again
that $U_{k,j} > \delta$ for all $j\in\mathcal{J}_2'$, it then follows that
\begin{align*}
(\forall\, j \in \mathcal{J}_2' : j \geq j_0) \qquad
\Delta_{k,j} &> U_{k,j} \frac{(1-\epsilon)\delta}{2} 
\geq \frac{(1-\epsilon)\delta^2}{2}.
\end{align*}
Since $\mathcal{J}_2' \subseteq \mathcal{J}_2$, one may then conclude that
$\liminf_{j\to\infty,j\in\mathcal{J}_2} \Delta_{k,j} 
\geq (1-\epsilon)\delta/2 > 0$.  However, this result is impossible because,
by definition, $\Delta_{k,j} < 0$ for every $j\in\mathcal{J}_2$.  Therefore it
must be the case that $\lim_{j\to\infty,j\in\mathcal{J}_2} U_{k,j} = 0$.

Finally, let $\mathcal{L}$ be a set of sequence of indices such
that $\lim_{j\to\infty,j\in\mathcal{L}} U_{k,j} = \limsup_{j\to\infty}
U_{k,j}$.  Since $\mathcal{J}_1 \cup \mathcal{J}_2 = \nat$, it follows that
there is at least one possible choice of $i \in \{1,2\}$ for which
$\mathcal{L}$ has an infinitely large intersection with $\mathcal{J}_i$.  For
both possible values of $i$, the analyses above show that
$\lim_{j\to\infty,j\in\mathcal{J}_i} U_{k,j} = 0$, so it follows that
$\limsup_{j\to\infty} U_{k,j} = 0$, and, since $U_{k,j}$ is necessarily
nonnegative for all $j$, that $\lim_{j\to\infty} U_{k,j} = 0$.
\end{proof}

\section{Proof of Proposition~\ref{prop:fullAbstract}\ref{item:stuckInner}}
\label{app:full-generic-stuck-inner}
\begin{proof}
In the situation that the inner loop runs indefinitely at outer iteration $k$, 
Lemma~\ref{lem:genericInnerLoop}
asserts that $\lim_{j\to\infty} U_{k,j} = \lim_{j\to\infty} S_{k,j}
=\lim_{j\to\infty} A_{k,j} = 0$. From the definitions of the $U_{k,j}$ and
$S_{k,j}$, it immediately follows that $\lim_{j\to\infty} u^{k,j} = 0$ and
$\lim_{j\to\infty} s^{k,j} = 0$. From Definition~\ref{def:subSolver}, we also
have that
\begin{align}
  (\forall\,j \geq 0) && &&
  (s^{k,j}, 0) &\in \partial F(x^{k,j}, u^{k,j}) + (0,c_k u^{k,j} - p^k) \nonumber \\
  && \Leftrightarrow &&
  (s^{k,j}, p^k - c_k u^{k,j}) &\in \partial F(x^{k,j}, u^{k,j}). &&&& \label{cspsubgrad2}
\end{align}
Now, a vector $(\bar s,\bar p)\in\realn\times\realm$ is
a subgradient of $F$ at $(\bar x,\bar u)\in\realn\times\realm$ whenever 
\begin{equation*}
    (\forall\, (x,u) \in \realn) 
         \qquad F(x,u) \geq F(\bar x, \bar u) 
            + \big\langle(\bar s,\bar p),(x,u)-(\bar x,\bar u)\big\rangle. %
\end{equation*}
Fix any $j \geq 0$. Applying the above subgradient inequality with $u=0$,
$\bar x = x^{k,j}$, $\bar u = u^{k,j}$, $\bar s = s^{k,j}$, and $\bar p = p^k
- c_k u^{k,j}$ yields from~\eqref{cspsubgrad2} that
\begin{align}
(\forall\,x\in\real^n) \quad
    F(x, 0) &\geq F(x^{k,j}, u^{k,j}) + \inner{(s^{k,j}, p^k - c_k u^{k,j})}{(x, 0) - (x^{k,j}, u^{k,j})} \nonumber \\
    &\geq F(x^{k,j}, u^{k,j}) + \inner{s^{k,j}}{x - x^{k,j}} - \inner{p^k - c_k u^{k,j}}{u^{k,j}} \nonumber \\
    &\geq F(x^{k,j}, u^{k,j}) + \inner{s^{k,j}}{x} - \inner{s^{k,j}}{x^{k,j}} - \inner{p^k}{u^{k,j}} + c_k \norm{u^{k,j}}^2. \label{asymp-ineq} 
\end{align}
Now consider the behavior of the last four terms on the right-hand side of
this inequality as $j\to\infty$:
\begin{itemize}[nosep]
\item Since $\lim_{j\to\infty} s^{k,j} = 0$, it follows that
        $\lim_{j\to\infty} \inner{s^{k,j}}{x} = 0$ for any choice of $x\in\real^n$.
\item By the definition of compatible subproblem process, 
        $\lim_{j\to\infty} \inner{x^{k,j}}{s^{k,j}}=0$
\item Since  $\lim_{j\to\infty} u^{k,j} = 0$, one has 
        $\lim_{j\to\infty} \inner{p^k}{u^{k,j}} = 0$.
\item For the same reason, one immediately has
        $\lim_{j\to\infty} c_k \smallnorm{u^{k,j}}^2=0$.
\end{itemize}
Thus, all of these terms converge to zero with $j$, and so taking
the $j\to\infty$ limit in~\eqref{asymp-ineq} leads to
\begin{equation*}
    (\forall\,x\in\real^n) \qquad F(x, 0) \geq \limsup_{j\to\infty} F(x^{k,j}, u^{k,j}).
\end{equation*}
Taking the infimum on the left over all $x\in\real^n$, one then obtains.
\begin{equation*}
\inf_{x\in\real^n} F(x,0) \geq \limsup_{j\to\infty} F(x^{k,j}, u^{k,j}).
\end{equation*}
This result, together with $\lim_{j\to\infty} u^{k,j} = 0$ as already
established, is the claimed asymptotic optimality primal property of
$\{x^{k,j}\}_{j=0}^\infty$.

By~\eqref{eq:subgradient-relations}, an equivalent statement 
to~\eqref{cspsubgrad2} is that $(x^{k,j}, u^{k,j}) \in
\partial D(s^{k,j}, p^k - c_k u^{k,j})$ for all $j$. Applying the definition of
concave subgradient mapping $\partial D$ in~\eqref{eq:subgradient-dual}
with $s'=0$ then produces
\begin{align*}
(\forall\,p\in\real^m) \;\;
    D(0, p^*) &\leq D(s^{k,j}, p^k - c_k u^{k,j}) - \inner{x^{k,j}}{0-s^{k,j}} - \inner{u^{k,j}}{p^* - (p^k - c_k u^{k,j})} \\
    &\leq D(s^{k,j}, p^k - c_k u^{k,j}) + \inner{x^{k,j}}{s^{k,j}} - \inner{u^{k,j}}{p^*} + \inner{u^{k,j}}{p^k} - c_k \norm{u^{k,j}}^2.
\end{align*}
By logic similar to that applied to~\eqref{asymp-ineq}, the last four terms on
the right side of this inequality converge to zero as $j\to\infty$, so
taking the limit as $j\to\infty$ yields
\begin{align*}
(\forall\,p\in\real^m) \quad
    D(0, p) &\leq \liminf_{j\to\infty} D(s^{k,j}, p^k - c_k u^{k,j}) \\
    & \leq \limsup_{j\to\infty} D(s^{k,j}, p^k - c_k u^{k,j}) \\
    & \leq D\Big(\lim_{j\to\infty} s^{k,j},\lim_{j\to\infty} \{p^k - c_k u^{k,j}\} \Big) &&[\text{by upper semicontinuity of~}D]
         \\
    & \leq D(0, p^k) &&[\text{since~} s^{k,j}\to 0, u^{k,j}\to 0].
\end{align*}
Thus, $p^k$ must be a dual optimal point.
\end{proof}

\section{Proof of Lemma~\ref{lemma:derivative-fk}}\label{app:derivative-fk-proof}
\begin{proof}
Fix any $k\geq 0$ and $z\in\real^m$. Fenchel's inequality states that, for
any $p\in\real^m$, it is always the case that $h_k(p) + h_k^*(-z) \geq
\inner{p}{-z}$, and that $p\in\partial h_k^*(-z)$ if and only if $h_k(p) +
h_k^*(-z) = \inner{p}{-z}$.  An equivalent statement is that $h_k^*(-z)
\geq -h_k(p) - \inner{p}{z}$ for all $p\in\real^m$ and $p\in\partial
h_k^*(-z)$ if and only if $h_k^*(-z) = -h_k(p) - \inner{p}{z}$. Therefore,
\begin{align}
        \partial h_k^*(-z) 
            & = \argmax_{p\in\real^m} 
                    \big\{ - h_k(p) -\inner{p}{z}  \big\} \nonumber \\
            & = \argmin_{p\in\real^m} 
                    \big\{\inner{p}{z} + h_k(p) \big\} \nonumber \\
            & = \argmin_{p\in\real^m} 
                    \left\{\inner{p}{z} + f^*(-M\transpose p) 
                            + \frac{1}{2c_k}\norm{p - p^k}^2 \right\}.
                            \label{eq:problem-hk}
\end{align}
The minimization in~\eqref{eq:problem-hk} has a unique solution since the
function being minimized is closed, proper, and strongly convex, so therefore
the set of subgradients $\partial h_k^*(-z)$ of $h_k^*$ at $-z$ is a
singleton.  When a convex function has exactly one subgradient at a point,
then the function is differentiable there and the given subgradient is its
gradient; see for example~\cite[Theorem 25.1]{rockafellar1970convex}
or~\cite[Proposition 4.2.2]{BerConv}. Therefore, $\nabla h_k^*(-z)$ is the
unique vector $t\in\real^m$ meeting the optimality conditions
for~\eqref{eq:problem-hk}, namely
\begin{equation} \label{eq:dual-gradient-1}
    0 \in z -M \partial f^*(-M\transpose t) + \frac{1}{c_k}(t-p^k).
\end{equation}
On the other hand, a necessary optimality condition for~\eqref{eq:x-min} is
\begin{align*}
&&    0 &\in  \partial f(x) + M\transpose p^k + c_kM\transpose(M\bar x - z) && \\
\Leftrightarrow && 
        -M\transpose\big(p^k + c_k(M\bar x - z) \big) &\in \partial f(x) \\
\Leftrightarrow &&
        x &\in \partial f^*\big(-M\transpose\big(p^k + c_k(M\bar x-z)\big)\big).
\end{align*}
Left-multiplying both sides of this inclusion by $-c_k M$ produces
\begin{equation*}
-c_k M \bar x 
    \in -c_k M \partial f^*\big(-M\transpose\big(p^k + c_k(M\bar x-z)\big)\big).
\end{equation*}
Adding $c_k z - p^k$ to both sides of this inclusion then leads to
\begin{equation*}
    -\big(p^k + c_k(M\bar x-z)\big)
    \in -p^k + c_k\Big(z - M\partial f^*\big(-M\transpose\big(p^k + c_k(M\bar x-z)\big)\big)\Big).
\end{equation*}
Letting $s \doteq p^k + c_k(M\bar x-z)$, the above inclusion is equivalent to
\begin{align*}
&&    -s &\in -p^k + c_k\big(z - M\partial f^*(-M\transpose s)\big) && \\
\Leftrightarrow &&
        0 &\in s - p^k + c_k\big(z - M\partial f^*(-M\transpose s)\big)  \\
\Leftrightarrow &&
        0 &\in z - M\partial f^*(-M\transpose s) + \frac{1}{c_k}(s - p^k).
\end{align*} 
Since this inclusion is identical to~\eqref{eq:dual-gradient-1}, whose
solution is unique, it must be the case $\nabla h_k^*(-z) = t = s = p^k +
c_k(M\bar x-z)$.  Applying the chain rule to $f_k = h_k^* \circ (-I)$ then yields
\begin{equation} \label{eq:gradient-fk}
    \nabla f_k(z) = - \nabla h_k^*(-z)=-s=-p^k - c_k(M\bar x-z),
\end{equation} 
as claimed.
\end{proof}

\section{Proof of Proposition~\ref{prop:alternatingCSP}}
\label{app:alternatingCSP-proof}
\begin{proof}
Fix any $j \geq 0$. From the necessary optimality conditions
for~\eqref{eq:admm-xkj-again}, one may derive that
\begin{equation*}
    0 \in \partial f(x^{k,j+1}) + M\transpose p^k + c_k M\transpose(Mx^{k,j+1} - z^{k,j}).
\end{equation*}
Adding $c_k M\transpose (z^{k,j} - z^{k,j+1})$ to both sides of this inclusion
yields
\begin{equation} \label{fb-x-subgrad}
    c_k M\transpose (z^{k,j} - z^{k,j+1}) \in \partial f(x^{k,j+1}) + M\transpose p^k + c_k M\transpose (Mx^{k,j+1} - z^{k,j+1}).
\end{equation}
Similarly, from the necessary optimality conditions for~\eqref{eq:admm-zkj}, we have
\begin{equation} \label{fb-z-subgrad}
0 \in \partial g(z^{k,j+1}) - p^k + c_k(z^{k, j+1} - M x^{k, j+1})
= g(z^{k,j+1}) -\big(p^k + c_k(M x^{k, j+1} - z^{k, j+1}) \big).
\end{equation}
Applying~\eqref{eq:conj-lag-sub} with $x=x^{k,j+1}$, $z=z^{k,j+1}$, and $p=p^k
+ c_k(Mx^{k, j+1}-z^{k, j+1})$ then produces
\begin{multline*}
    \partial L\Big(\big(x^{k,j+1},z^{k,j+1}\big), p^k + c_k(Mx^{k, j+1}-z^{k, j+1})\Big) \\
    = \Big\{\partial f(x^{k,j+1}) + M\transpose \big(p^k + c_k(Mx^{k, j+1}-z^{k, j+1})\big)\Big\} \\
    \qquad \qquad \qquad \times \Big\{ \partial g(z^{k,j+1}) - \big(p^k + c_k(Mx^{k, j+1}-z^{k, j+1})\big) \Big\} \\ 
    \times \big\{z^{k, j+1} - Mx^{k, j+1}\big\}.
\end{multline*}
Substituting~\eqref{fb-x-subgrad} as a member of the first set in this
Cartesian product and~\eqref{fb-z-subgrad} as a member of the second produces
\begin{equation*}
    \partial L\Big(\big(x^{k,j+1},z^{k,j+1}\big), p^k + c_k(Mx^{k, j+1}-z^{k, j+1})\Big) \\
    \ni \begin{pmatrix} c_k M\transpose (z^{k,j} - z^{k,j+1}) \\ 
                        0
                        \\ z^{k, j+1} - Mx^{k, j+1} 
        \end{pmatrix}.
\end{equation*}
Consequently, setting $u^{k,j+1}\doteq z^{k,j+1} - Mx^{k,j+1}$ and 
\begin{equation*}
    s^{k,j+1} \doteq 
    \begin{pmatrix}
        s_x^{k,j+1} \\
        0
    \end{pmatrix} \doteq
    \begin{pmatrix}
        c_k M\transpose (z^{k,j} - z^{k,j+1}) \\
        0
    \end{pmatrix}
\end{equation*}
as specified in~\eqref{subgrad-fb}, we obtain the equivalent inclusions
\begin{align*}
    && \Big((s_x^{k,j+1}, 0), u^{k,j} \Big) & \in \partial L\Big(\big(x^{k,j+1},z^{k,j+1}\big), p^k - c_k u^{k,j}\Big) \\
    \Leftrightarrow && \Big((s_x^{k,j+1}, 0), p^k - c_k u^{k,j} \Big) & \in \partial F\Big(\big(x^{k,j+1},z^{k,j+1}\big), u^{k,j}\Big) \\
    \Leftrightarrow && \Big((s_x^{k,j+1}, 0), 0 \Big) & \in \partial F\Big(\big(x^{k,j+1},z^{k,j+1}\big), u^{k,j}\Big) + \Big((0,0), c_k u^{k,j} - p^k\Big).
\end{align*}
Therefore, \eqref{CSPsubgrad} holds for an arbitrary choice of $j\geq 0$. To
establish a compatible subproblem process, we must also
verify~\eqref{CSPconverge} and~\eqref{CSPinner}.  

In view of the above analysis leading to
~\eqref{eq:admm-like-x-nukj1}-\eqref{eq:admm-like-z-nukj1}, the sequence
$\{z^{k,j}\}_{j=0}^\infty$ may be considered as being generated by the
proximal gradient method applied minimizing $f_k + g$, with its stepsize set
to $1/c_k$ and $c_k$ being a valid Lipschitz modulus of $\nabla f_k$. A
minimizer of this problem is assumed to exist, so the convergence of
$\{z^{k,j}\}_{j=0}^\infty$ to such a minimizer is guaranteed by standard
convergence results for the proximal gradient method, for
example~\cite[Theorem 10.24]{BeckBook}.  It then follows immediately from
their definitions that $s_x^{k,j}$ and $s^{k,j}$ both converge to zero as
$j\to\infty$, confirming~\eqref{CSPconverge}.

To confirm~\eqref{CSPinner}, observe that
\begin{align}
(\forall\,j\geq 0) \qquad 
\inner{(x^{k,j+1}, z^{k,j+1})}{s^{k,j+1}}
&= \inner{(x^{k,j+1}, z^{k,j+1})}{(s_x^{k,j+1},0)} \nonumber \\
&= \inner{x^{k,j+1}}{s_x^{k,j+1}} \nonumber \\
&= c_k\inner{x^{k,j+1}}{M\transpose (z^{k,j} - z^{k,j+1})} &&&& \nonumber \\
&= c_k\inner{Mx^{k,j+1}}{z^{k,j} - z^{k,j+1}}. \label{fb-inner-2}
\end{align}
We will show that the first vector in this inner product converges with $j$:
for any $j \geq 0$, one may rearrange the equation $\nabla f_k(z^{k,j}) =
-\big(p^k + c_k(Mx^{k,j+1}-z^{k,j})\big)$ from Lemma~\ref{lemma:derivative-fk}
into
\begin{equation*}
Mx^{k,j+1} = z^{k,j} - \frac{1}{c_k}\big(p^k + \nabla f_k(z^{k,j})\big).
\end{equation*}
By the continuity of $\nabla f_k$, as established just
below~\eqref{eq:defbal-prob}, convergence of $z^{k,j}$ as $j\to\infty$ thus
implies convergence of $Mx^{k,j}$, and we will call this limit
$(Mx)^{k,\infty}$. By the convergence of the $z^{k,j}$, the limit
of~\eqref{fb-inner-2} as $j\to\infty$ must be $c_k\inner{(Mx)^{k,\infty}}{0} =
0$, verifying~\eqref{CSPinner}.
\end{proof}

\section{Proof of Theorem~\ref{thm:ALM-AR-FISTA-CD-convergence}}
\label{app:ALM-AR-FISTA-CD-convergence-proof}
\begin{proof}
Considering the analysis in Section~\ref{sec:fista-cd}, 
the ALM-AR-FISTA-CD algorithm follows the generic 
Algorithm~\ref{alg:gen-relax} framework, using DEFBAL-CD as the 
subproblem solver. 
We will now show that the FISTA-CD subproblem solver is a compatible
subproblem process as presented in Definition~\ref{def:subSolver}. Fix any $k
\geq 0$ and define, for $\beta_j=(t_{k,j}-1)/{t_{k,j+1}}$ for all $j\geq 1$,
where the sequence $\{t_{j,k}\}$ is as specified in
steps~\ref{step:innerLoopInit} and~\ref{step:nextt}. From the recursion in
step~\ref{step:nextt}, one has 
\begin{equation}
(\forall\, j\geq 1) \qquad
    \beta_j=\frac{t_j-1}{t_{j+1}}
       =\frac{\frac{j+a-1}{a}-1}{\frac{j+1+a-1}{a}}=\frac{j-1}{j+a}. \label{defbetaj}
\end{equation}
From step~\ref{step:calcy}, we can then write
\begin{equation*}
(\forall\, j\geq 2) \qquad
y^{k,j}=z^{k,j}+\beta_{j-1}(z^{k,j}-z^{k,j-1}).
\end{equation*}
Recalling~\eqref{eq:subgradient-lagrange-kj0} and~\eqref{simplifiedSubgrad},
the $x$-subgradients $s_x^{k,j+1}$ resulting after steps~\ref{step:minx}
and~\ref{step:minz} may be rewritten as
\begin{align}
(\forall\, j\geq 2)&&
    s_x^{k,j+1} &= c_k M\transpose (y^{k,j}-z^{k,j+1}) 
        \nonumber \\
    &&&=c_k M\transpose \big( z^{k,j}+\beta_{j-1}(z^{k,j}-z^{k,j-1}) - z^{k,j+1} \big) 
        \nonumber \\
    &&&=c_k M\transpose \big( \beta_{j-1}(z^{k,j}-z^{k,j-1}) - (z^{k,j+1}-z^{k,j})\big).
        \label{expandedsxkjp1} 
    &&&&
\end{align}
By Theorem~\ref{thm:fista-cd-convergence}, we know $\{z^{k,j}\}$ converges to
some $z^{k,\infty}$, so both $z^{k,j+1}-z^{k,j}$ and $z^{k,j}-z^{k,j-1}$
converge to $0$. Since $a$ is positive, we can easily verify
from~\eqref{defbetaj} that $0 < \beta_j<1$ for all $j \geq 2$, so $y^{k,j}$
must converge to $z^{k,\infty}$, the same limit as for $z^{k,j}$. From the
boundedness of $\beta_j$ and $z^{k,j+1}-z^{k,j}$ and $z^{k,j}-z^{k,j-1}$
both converging to $0$, it follows that 
\begin{equation} \label{sxkjInsideLimit}
\lim_{j\to\infty} \big\{\beta_{j-1}(z^{k,j}-z^{k,j-1}) - (z^{k,j+1}-z^{k,j})\big\} = 0,
\end{equation}
and hence from~\eqref{expandedsxkjp1} that $\lim_{j \to \infty} s_x^{k,j+1} = 0$.
Again referring to~\eqref{eq:subgradient-lagrange-kj0}, the $z$-subgradient
$s_z^{k,j+1}$ resulting from step~\ref{step:minz} is $0$ for all $k$, so we
obtain 
\begin{equation*}
\lim_{j\to\infty} s^{k,j} 
= \lim_{j\to\infty} (s_x^{k,j},s_z^{k,j})
= \lim_{j\to\infty} (s_x^{k,j},0)
= (0,0).
\end{equation*}
Therefore, condition~\eqref{CSPconverge} from Definition~\ref{def:subSolver}
holds.  We next turn to condition~\eqref{CSPinner}. We have already shown that
$f_k$ is differentiable and with a $c_k$-Lipschitz gradient. %
Since $y^{k,j}$ converges to $z^{k,\infty}$,the continuity of $\nabla f_k$ implies
that $\nabla f_k(y^{k,j})$ converges to $\nabla f_k(z^{k,\infty})$. From 
Lemma~\ref{lemma:derivative-fk}, we know 
\begin{equation*}
(\forall\,j\geq 2) \qquad
    \nabla f_k(y^{k,j}) = -p^k - c_k (Mx^{k,j+1} - y^{k,j}).  
\end{equation*}
As $p^k$ and $c_k$ are fixed and $y^{k,j}$ converges, the convergence of
$\nabla f_k(y^{k,j})$ implies that $Mx^{k,j+1}$ is converging to some point
$(Mx)^{k,\infty}$. Now, from~\eqref{expandedsxkjp1} we may write 
\begin{align*}
(\forall\,k \geq 2) \quad
    \inner{x^{k,j}}{s_x^{k,j}} 
        &= \inner{x^{k,j}}
                 {c_k M\transpose\big( \beta_{j-1}
                      (z^{k,j}-z^{k,j-1}) - (z^{k,j+1}-z^{k,j})}  \\
    &= c_k \inner{Mx^{k,j}}{\beta_{j-1}(z^{k,j}-z^{k,j-1}) - (z^{k,j+1}-z^{k,j})}.
\end{align*}
Taking the limit $j\to\infty$ and using~\eqref{sxkjInsideLimit} then yields
\begin{equation*}
    \lim_{j\to\infty} \inner{x^{k,j}}{s_x^{k,j}}
    = \big\langle(Mx)^{k,\infty},0\big\rangle = 0.
\end{equation*}
Then, using that $s_z^{k,j} = 0$ for all $j$, one immediately has
\begin{equation*}
\lim_{j\to\infty} \big\langle(x^{k,j}, z^{k,j}),(s_x^{k,j},0)\big\rangle
= \lim_{j\to\infty} \big\{ \inner{x^{k,j}}{s_x^{k,j}} + \inner{z^{k,j}}{0} \big\}
= \lim_{j\to\infty} \inner{x^{k,j}}{s_x^{k,j}} = 0,
\end{equation*}
so condition~\eqref{CSPinner} does indeed hold.

It remains to establish condition~\eqref{CSPsubgrad}. 
From~\eqref{eq:conj-lag-sub} and the optimality conditions of~\eqref{eq:admm-xkj-cd}
and~\eqref{eq:admm-zkj-cd}, we can write, for any choice of $j \geq 1$,
\begin{align*}
    && \big((s^{k,j+1}, 0), z^{k,j+1} - Mx^{k,j+1}\big) 
       &\in \partial L\big((x^{k,j+1}, z^{k,j+1}), p^k+c_k(Mx^{k,j+1}-z^{k,j+1})\big) \\
    \Leftrightarrow && \big((s_x^{k,j+1}, 0), u^{k,j+1})\big) 
       &\in \partial L\big((x^{k,j+1}, z^{k,j+1}), p^k-c_k u^{k,j+1}\big) \\
    \Leftrightarrow && \big((s_x^{k,j+1}, 0), p^k-c_k u^{k,j+1}\big) 
       &\in \partial F\big((x^{k,j+1}, z^{k,j+1}), u^{k,j+1}\big) \\
    \Leftrightarrow && \big((s_x^{k,j+1}, 0), 0\big) 
       &\in \partial F\big((x^{k,j+1}, z^{k,j+1}), u^{k,j+1}\big) 
                     + \big((0, 0), c_k u^{k,j+1} - p^k\big).
\end{align*}
Keeping in mind that $s^{k,j} = (s_x^{k,j},0)$ for all $k$, the final
condition above verifies~\eqref{CSPsubgrad}. As a result, the FISTA-CD
procedure is a compatible subproblem process.

All the claimed conclusions then follow immediately from
Proposition~\ref{prop:convergeUSA} and the specific form of the dual problem
derived in Section~\ref{sec:sum-of-two-convex}.
\end{proof}

\end{document}